\title{\vskip-1.0em\sc Approximately multiplicative maps from weighted semilattice algebras}
\author{\sc Yemon Choi}
\date{6th May 2013}
\begin{document}

\maketitle

\begin{abstract}
We investigate which weighted convolution algebras $\lom(S)$, where $S$ is a semilattice, are AMNM in the sense of Johnson (JLMS, 1986). We give an explicit example where this is not the case. We show that the unweighted examples are all AMNM, as are all $\lom(S)$ where $S$ has either finite width or finite height. Some of these finite-width examples are isomorphic to function algebras studied by Feinstein (IJMMS, 1999).

We also investigate when $(\lom(S),\Mat{2})$ is an AMNM pair in the sense of Johnson (JLMS, 1988), where $\Mat{2}$ denotes the algebra of $2$-by-$2$ complex matrices. In particular, we obtain the following two contrasting results: (i)~for many non-trivial weights on the totally ordered semilattice $\Nmin$, the pair $(\lom(\Nmin),\Mat{2})$ is not AMNM; (ii)~for any semilattice $S$, the pair $(\ell^1(S),\Mat{2})$ is AMNM.
The latter result requires a detailed analysis of approximately commuting, approximately idempotent $2\times 2$ matrices.

\medskip
\noindent MSC 2010:
39B72 (primary)
\YCrem{``Systems of functional equations and inequalities''; used by e.g. Hyers and Rassias}
46J10 (secondary)
\YCrem{``Banach algebras of continuous functions, function algebras''; used in Feinstein's article, and Sidney's}

\end{abstract}


\begin{section}{Introduction}

\subsection{Setting the scene}
Given a constant $\delta>0$, we say that a functional $\psi$ on a Banach algebra $A$ is $\delta$-multiplicative if the bilinear map $(a,b) \mapsto \psi(a)\psi(b)-\psi(ab)$ has norm at most $\delta$.
It is convenient, thinking of $\delta$ as small, to call such functionals \dt{approximately multiplicative} or \dt{almost multiplicative} (we shall use the former phrase).
Approximately multiplicative functionals have been studied by several authors: an obvious way to obtain examples is to take a multiplicative functional and add a functional of small norm, thought of as a perturbation.
The question naturally arises as to whether \emph{all} approximately multiplicative functionals occur in this way.

In \cite{BEJ_AMNM1}, B. E. Johnson undertook a  systematic study of this phenomenon, and coined the acronym AMNM (for Approximately Multiplicative implies Near Multiplicative). The precise definition will be deferred to a later section. Many examples of commutative Banach algebras with the AMNM property are given in \cite{BEJ_AMNM1}, as are some basic hereditary properties.
See also~\cite{Jarosz_AMNM,Sidney_AMNM} for results on uniform algebras, and \cite{Howey_JLMS} for results on certain non-uniform function algebras, including~$C^k[0,1]^m$.
The paper \cite{BEJ_AMNM2} widens the scope of the problem, by considering not just functionals, but approximately multiplicative linear maps between given Banach algebras. This leads to the notion of an \dt{AMNM pair}; again, the precise definition will be given below. We note that, as a special case of \cite[Theorem 3.1]{BEJ_AMNM2}, every amenable Banach algebra has the AMNM property; however, several of the examples in \cite{Howey_JLMS,Jarosz_AMNM,BEJ_AMNM1} possess non-zero point derivations, so that amenability is far from necessary for AMNM.

In this paper, we investigate these AMNM problems for the weighted $\ell^1$-convolution algebras of semilattices. Such algebras have provided provide useful test cases for various conjectures and techniques concerning commutative Banach algebras.
Moreover, any weighted semilattice algebra contains a dense subalgebra spanned by commuting idempotents. Thus, some of our work can be viewed as continuing an old strand of Banach algebra theory, which considers lifting and perturbation questions for families of idempotents.
The main difference here is that we are not limiting ourselves to families of pairwise orthogonal idempotents, but allowing more complicated order structure.

The original motivation for the present work arises from studying the cases where the underlying semilattice is $\Nmin$, the set of natural numbers equipped with pairwise minimum as a semigroup product. The weighted $\ell^1$-algebras of $\Nmin$ turn out to be isomorphic to function algebras that were studied by J.~F.~Feinstein in \cite{Fein_alg}; they have also been studied in the context of certain generalized notions of amenability, see for instance \cite[\S3.10]{DalesLoy_diss}. Moreover, some of these algebras satisfy such versions of amenability while having non-trivial 2nd-degree simplicial cohomology (the present author, unpublished calculations).

\subsection{Overview of the paper}
We have tried to make this paper self-contained, save for some basic knowledge of Banach algebras. Thus, in Section~\ref{s:preliminaries} we give the relevant definitions of the AMNM property for algebras and for pairs of algebras, as promised earlier; and we record some basic observations on convolution algebras and their characters.
We then observe that $\ell^1(S)$ is AMNM for any semilattice $S$ (Theorem~\ref{t:AMNM-no-weight}).
On the other hand, we give an explicit example of a semilattice $T$ and a weight on $T$ such that the weighted convolution algebra $\lom(T)$ is not AMNM (Theorem~\ref{t:not-AMNM}).
In Section~\ref{s:AMNM-examples}, as a special case of a general technical result, we prove that if $S$ has either finite width or finite height, then $\lom(S)$ is AMNM for every weight~$\om$. This applies in particular when $S=\Nmin$, the original case of interest.

The picture is far less complete if we consider approximately multiplicative maps into algebras other than~$\Cplx$. Let $\sT_2$ be the (commutative, non-semisimple) algebra of \dt{dual numbers} over~$\Cplx$, and let  $\Mat{2}$ be the (non-commutative, semisimple) algebra of $2\times 2$ matrices with entries in~$\Cplx$.
In Theorem~\ref{t:wt-Nmin-T2}, we show that whenever $\om$ is a non-trivial weight on $\Nmin$, then $(\lom(\Nmin),\sT_2)$ is not an AMNM pair.
In Theorem~\ref{t:wt-Nmin-M2}, we show that for many non-trivial weights on $\Nmin$, the pair $(\lom(\Nmin),\Mat{2})$ fails to be AMNM.

These examples suggest that, if we want positive AMNM results for range algebras other than $\Cplx$, we should focus attention on the unweighted case. Indeed, we prove that for an arbitrary semilattice $S$, the pairs $(\ell^1(S), \sT_2)$ and $(\ell^1(S),\Mat{2})$ are ``uniformly AMNM'' (the terminology is explained below, in Definition~\ref{d:unif-AMNM}). The proof of this for $\Mat{2}$ 
takes up all of Section~\ref{s:AMNM-M2}: although the techniques used are elementary, a complete proof seems to require substantially more work than is needed for $\sT_2$.
Finally, we close the paper by briefly discussing some possible avenues for future work.

\begin{rem}
Some of our calculations would work for certain weighted algebras on abelian Clifford semigroups, specifically those where the weight is trivial on each group component.
We have decided to only consider the semilattice case for now:
an adequate treatment of weighted abelian Clifford semigroups would require a more detailed look at AMNM problems for Beurling algebras than the present paper can accommodate.
 The same remarks apply for inverse semigroups: we did not see how to go beyond  superficial generalizations of the results here.
\end{rem}

\begin{rem}\label{r:belated}
After completing the work presented in this article, we learned of the paper~\cite{Law_amnm-to-Mn}, which considers a related but different notion of AMNM (briefly, the order of quantifiers is different). Although the stability result that is proved in [Theorems 2 and 5, ibid.] is different from ours, it may be possible to use those arguments to streamline the approach taken in Section~\ref{s:AMNM-M2}, and perhaps even to extend the results of Section~\ref{s:AMNM-M2} from $\Mat{2}$ to~$\Mat{n}$.
\end{rem}
\end{section}

\begin{section}{Preliminaries}\label{s:preliminaries}
We assume familiarity with the basics of Banach spaces and Banach algebras. Throughout $\ptp$ denotes the projective tensor product of Banach spaces. If $A$ is a Banach algebra then $\pi_A:A\ptp A\to A$ denotes the unique bounded bilinear map satisfying $\pi_A(a_1\tp a_2) = a_1a_2$.

\subsection{Defining AMNM}
Our notation is different from that of Johnson's articles~\cite{BEJ_AMNM1,BEJ_AMNM2}, and so we repeat some of the basic definitions for sake of clarity.

\begin{dfn}[Multiplicative defect]
Let $A$ and $B$ be Banach algebras, and let $T:A\to B$ be a bounded linear map. We define the \dt{multiplicative defect of $T$} to be
\begin{equation}\label{eq:diff}
\begin{aligned}
\diff(T)
 & = \norm{T\circ\pi_A - \pi_B \circ (T \tp T) : A\ptp A \to B} \\
 & = \sup \{ \norm{T(xy)-T(x)T(y)} \st x, y\in A, \norm{x}\leq 1 , \norm{y}\leq 1\} .
\end{aligned}
\end{equation}
\end{dfn}

$T$ is said to be \dt{$\delta$-multiplicative}, in the sense of \cite{BEJ_AMNM2}, when $\diff(T)\leq \delta$. Note that Johnson uses the notation $T^\vee$ instead of $\diff(T)$. 
Of course, a $0$-multiplicative map is just one that is multiplicative in the usual sense, that is, a continuous linear algebra \hm\ (which need not be unital, even if $A$ and $B$ are unital algebras). We denote by $\Mult(A,B)$ the set of multiplicative, bounded linear maps $A\to B$; a non-zero element of $\Mult(A,\Cplx)$ is called a \dt{character} of~$A$.

Given a subset $K$ in a metric space $(X,d)$ and $y\in X$,
 define $\dist_X(y,K)$ to be $\inf_{x\in X} d(x,y)$.

\begin{dfn}[AMNM algebras, \cite{BEJ_AMNM1}]\label{d:AMNM-alg}
A Banach algebra $A$ is said to be \dt{AMNM}, or have the \dt{AMNM property}, or have \dt{stable characters}, if for each $\veps>0$ there exists $\delta>0$ such that
\[ \psi\in A^*, \;\diff(\psi) \leq \delta \implies \dist_{A^*}(\psi, \Mult(A,\Cplx))\leq \veps. \]
\end{dfn}

While Definition \ref{d:AMNM-alg} does not require $A$ to be commutative, it seems most natural to study the AMNM property for commutative Banach algebras, since those are the ones for whom characters are most informative (via Gelfand theory). For non-commutative algebras~$A$, the following definition seems more natural.

\begin{dfn}[AMNM pairs of Banach algebras, \cite{BEJ_AMNM2}]
\label{d:AMNM-pairs}
Let $A$ and $B$ be Banach algebras. We say that the pair $(A,B)$ is an \dt{AMNM pair} if, for every $K>0$ and $\veps>0$, there exists $\delta>0$ such that
\[ T\in\Lin(A,B), \; \norm{T}\leq K,\; \diff(T) \leq \delta \implies \dist_{\Lin(A,B)}(T,\Mult(A,B)) \leq \veps. \]
\end{dfn}

The presence of the {\it a~priori}\/ upper bound $K$ may seem curious at first sight. One reason for imposing such a bound is that we can find $T$ such that $\dist_{\Lin(A,B)}(T,\Mult(A,B))$ is arbitrarily small while $\diff(T)\geq 1$. See \cite[p.~295]{BEJ_AMNM2} for an example with $A=\Cplx$ and $B$ the algebra $\Mat{2}$ of $2\times 2$ matrices.

Nevertheless, in the present paper, we shall obtain examples of pairs of algebras which are not just AMNM, but which satisfy a stronger property, defined as follows.

\begin{dfn}\label{d:unif-AMNM}
Let $A$ and $B$ be Banach algebras. We say that $(A,B)$ is a \dt{uniformly AMNM pair} if, for every $\veps>0$, there exists $\delta>0$ such that
\[ T\in\Lin(A,B), \; \diff(T) \leq \delta \implies \dist_{\Lin(A,B)}(T,\Mult(A,B)) \leq \veps. \]
\end{dfn}

With this terminology, a Banach algebra $A$ is AMNM if and only if the pair $(A,\Cplx)$ is uniformly AMNM.  In some cases, AMNM pairs are automatically uniformly AMNM: for instance, this is the case when $B=C(X)$, since a $\delta$-multiplicative linear map $A\to C(X)$ has norm at most $1+\delta$ \cite[Proposition 5.5]{Jarosz_LNM}. 
Note that Johnson~\cite[Example 1.5]{BEJ_AMNM2} has given an example of a commutative, semisimple Banach algebra $B$ for which the pair $(\Cplx,B)$ is AMNM but not uniformly AMNM.

\subsection{Some notation and terminology}
A \dt{weight} on a set $S$ is a function $\om: S \to (0,\infty)$. Given such a weight $\om$, we write $\lom(S)$ for the corresponding weighted $\ell^1$-space.
Throughout this paper, whenever we refer to a weight on a semigroup, we always mean a \emph{submultiplicative} weight. By a \dt{weighted semigroup}, we mean a pair $(S,\om)$ where $S$ is a semigroup and $\om$ is a weight on~$S$. A routine calculation shows that if $(S,\om)$ is a weighted semigroup, then $\lom(S)$, equipped with the natural convolution product, is a Banach algebra, called the \dt{weighted semigroup algebra} or \dt{convolution algebra} of $(S,\om)$.

A bounded function $f$ from a set $S$ to a Banach space $B$ has a unique continuous extension to a bounded linear map $\ell^1(S)\to B$, whose norm is precisely $\sup_{t\in S} \norm{f(t)}$. Moreover, if $S$ is a semigroup and $B$ is a Banach algebra, this extension will be multiplicative if and only if the original function $f$ is multiplicative.
Thus, the characters of $\ell^1(S)$ are in natural bijection with the non-zero semigroup homomorphisms $S\to (\Cplx,\times)$, sometimes called the \dt{semi-characters of~$S$}.

The corresponding version for weighted semigroup algebras is equally straightforward: bounded linear maps $\lom(S)\to B$ correspond to functions $f:S\to B$ such that
\begin{equation}\label{eq:above}
\sup_{s\in S} \om(s)^{-1}\norm{f(s)} <\infty,
\end{equation}
and so forth. Such a function will be called an \dt{$\om$-bounded map} from $S$ to $B$. In the case where $B=\Cplx$ and $f:S\to \Cplx$, the quantity in \eqref{eq:above} will be denoted by $\norm{f}_{\infty,\om^{-1}}$.

Since we can naturally and isometrically identify $\lom(S)\ptp\lom(S)$ with $\ell^1_{\om\times\om}(S\times S)$, it is easy to check that for a given function $f:S\to B$, the multiplicative defect of the corresponding linear map $\lom(S)\to B$ is
\begin{equation}\label{eq:defect-formula}
\diff_\om(f) \defeq \sup_{x,y\in S} \frac{\norm{f(x)f(y)-f(xy)}}{\om(x)\om(y)}. \end{equation}
In this way, all AMNM questions where the domain algebra $A$ is a weighted semigroup algebra can be rephrased in terms of $\om$-bounded maps from the given semigroup and $(\om\times\om)$-bounded maps from its Cartesian square. This is sometimes convenient if we need to define a map via case-by-case checking.

\subsection{Weighted semilattice algebras, and their characters}
A \dt{semilattice} is a commutative semigroup in which each element is idempotent.
Even quite simple semilattices can give rise to interesting Banach algebras, once we allow for weights. The following examples provided the initial motivation for this article, and will be revisited in Theorems \ref{t:wt-Nmin-T2} and \ref{t:wt-Nmin-M2}. 

\begin{eg}[``Feinstein algebras'']\label{eg:fein-alg}
Let $\Nmin$ denote the semilattice obtained by equipping the set of natural numbers with the binary operation $(m,n)\mapsto \min(m,n)$. The weights on this semilattice are precisely the functions $\om:\Nat\to [1,\infty)$. The convolution algebra $\lom(\Nmin)$ is semisimple and its character space can be naturally identified with~$\Nat$.
 (This is alluded to, without details, in \cite[11.1.5]{HewZuck_TAMS56}; see also Lemma~\ref{l:which-sets} below.)
In fact, the Gelfand transform maps $\lom(\Nmin)$ onto a dense subalgebra $B_\om\subset c_0(\Nat)$, defined~by
\begin{equation}\label{eq:Fein-alg}
 B_\om= \{ f\in c_0(\Nat) \st \abs{f_1}\om(1)+ \sum_{j\geq 2} \abs{f_{j+1}-f_j}\om(j) < \infty \}.
\end{equation}
equipped with the obvious norm.
The unitizations of the algebras $B_\om$ are isomorphic to examples studied in \cite{Fein_alg}. Note that in the cases where $\om$ is bounded, $\lom(\Nmin)$ is isomorphic as a Banach algebra to $\ell^1(\Nmin)$, and $B_\om$ will consist of those $c_0$-sequences which have bounded variation.
For more details, see \cite[\S3]{DalesLoy_diss}.
\end{eg}

\begin{rem}
In the literature, terminology and notation for these examples has varied.
Strictly speaking, Feinstein's construction in \cite{Fein_alg} is more general: he defines, for any given sequence $\al=(\al_n)_{n\geq 1}$ of strictly positive real numbers, a commutative unital Banach algebra $A_\al\subset C(\Nat\cup\{\infty\})$, and in his notation our $B_\al$ coincides with $A_\al \cap c_0$. Note, however, that in some later papers the algebra $A_\al$ is defined to be what we have denoted by $B_\al$.
\end{rem}

For general semilattices, a systematic approach to the character space is via the language of \dt{filters}. First we introduce some notation that will be used later in Section~\ref{s:AMNM-examples}.
Given a semilattice $S$ and a subset $E\subseteq S$, we define the following sets:
\begin{equation}\label{eq:n-gen}
\gen[n]{E}= \{ x_1\dotsb x_n \st x_1,\dots, x_n \in E \} \quad(n\geq 1),
\end{equation}
and $\gen{E} = \bigcup_{n\geq 1} \gen[n]{E}$.
Note that $\gen{E}$ is the sub-semilattice of $S$ generated by $E$.

\begin{dfn}
Let $S$ be a semilattice and let $F\subseteq S$. We say that $F$ is a \dt{filter in $S$} if it satisfies the following three properties:
it is non-empty;
it is closed under multiplication (i.e.~$xy\in F$ whenever $x,y\in F$);
and it is upwards closed in $S$ (i.e.~whenever $x\in S$ and $y\in F$ with $xy\in F$, then $x\in F$).
\end{dfn}

Given a non-empty subset $E$ in a semilattice $S$, there exists a smallest filter in $S$ which contains $E$. This can be described concretely: it is the set
\begin{equation}
F=\bigcup_{y\in \gen{E}} \{ x \in S \st x\succeq y\}.
\end{equation}

The following lemma assembles some basic results, which are well known, at least implicitly, for multiplicative linear functions on semilattice algebras. (See e.g.~\cite[11.1.1]{HewZuck_TAMS56}.) Since the proofs are no harder for the weighted versions, we leave them to the reader.

\begin{lem}\label{l:which-sets}
Let $S$ be a semilattice and $F\subset S$. The following are equivalent:
\begin{YCnum}
\item $F$ is a filter in $S$;
\item the indicator function of $F$ is a character on $\lom(S)$, for every submultiplicative weight~$\om$;
\item there exist a submultiplicative weight $\om$ such that the indicator function of $F$ is a character on~$\lom(S)$.
\end{YCnum}
Moreover, if $\phi:S\to\Cplx$ is multiplicative and non-zero, and $\phi=1$ on some subset $E\subseteq S$, then $\phi=1$ on the filter generated by~$E$.
\end{lem}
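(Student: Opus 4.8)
\emph{Proof strategy.}
The whole lemma rests on one observation: in a semilattice every element is idempotent, so any multiplicative $\phi:S\to\Cplx$ satisfies $\phi(x)=\phi(x^2)=\phi(x)^2$ and is therefore $\{0,1\}$-valued. Consequently a \emph{non-zero} such $\phi$ is exactly the indicator function $1_F$ of its support $F=\{x\in S:\phi(x)=1\}$, which is then non-empty. Moreover, submultiplicativity of $\om$ together with idempotency gives $\om(s)=\om(s^2)\le\om(s)^2$, hence $\om(s)\ge 1$ for all $s$; so $\norm{1_F}_{\infty,\om^{-1}}\le 1<\infty$ for every weight $\om$, and by the correspondence recorded in Section~\ref{s:preliminaries} the function $1_F$ (when it is multiplicative) extends to a bounded, non-zero, multiplicative functional on $\lom(S)$, i.e.\ a character. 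Thus the content of (i)$\Leftrightarrow$(ii)$\Leftrightarrow$(iii) is entirely the purely algebraic statement that, for non-empty $F\subseteq S$, the function $1_F$ is multiplicative if and only if $F$ is a filter: the implication (ii)$\Rightarrow$(iii) is then trivial (take $\om\equiv 1$), and for (iii)$\Rightarrow$(i) one restricts the given character to $S$ to land back in that algebraic statement.

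To prove that algebraic statement I would simply unwind definitions. Since $1_F$ is $\{0,1\}$-valued, the identity $1_F(xy)=1_F(x)1_F(y)$ for all $x,y\in S$ says precisely that $xy\in F$ holds if and only if both $x\in F$ and $y\in F$. The ``if'' half of this biconditional is exactly closure of $F$ under multiplication. The ``only if'' half is equivalent to $F$ being upwards closed in the sense of the paper: if $xy\in F$ then, because $x\cdot(xy)=xy$ is in $F$ and $xy\in F$, upward closure forces $x\in F$ (and $y\in F$ by the symmetric argument); conversely, if the ``only if'' half holds and we are given $y\in F$ with $xy\in F$, then applying it to $xy\in F$ yields $x\in F$. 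Hence $1_F$ is multiplicative exactly when $F$ is closed under products and upwards closed, which together with non-emptiness is the definition of a filter.

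The ``moreover'' clause is then immediate. If $\phi:S\to\Cplx$ is multiplicative and non-zero with $\phi\equiv 1$ on a set $E$ (which we may assume non-empty), then $\phi=1_F$ where $F=\{x:\phi(x)=1\}$, and by the previous paragraph $F$ is a filter containing $E$. Since the filter generated by $E$ is the \emph{smallest} filter containing $E$ (as recalled just before the lemma statement), it is contained in $F$, so $\phi\equiv 1$ on it.

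The argument is routine, and I do not anticipate any real obstacle. The only points that need a moment's care are matching the paper's formulation of ``upwards closed'' (stated via $xy\in F$ rather than via the order $\succeq$) to the condition extracted from multiplicativity, and not forgetting the $\om$-boundedness check for $1_F$ — the single place where submultiplicativity of the weight is used, and the reason the equivalence holds for \emph{every} weight simultaneously.
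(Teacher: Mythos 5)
Your proof is correct and complete. The paper itself gives no proof of this lemma (it cites Hewitt--Zuckerman and explicitly leaves the verification to the reader as routine), and your argument is exactly the intended one: use idempotency to force $\phi(x)^2=\phi(x)$ and $\om(s)\geq 1$, reduce to the purely combinatorial equivalence between ``$1_F$ is multiplicative'' and ``$F$ is a filter,'' and note that this is independent of the choice of weight.
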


\end{section}

\begin{section}{The AMNM property for weighted semilattice algebras}

\subsection{The unweighted case, as a guide}

\begin{thm}\label{t:AMNM-no-weight}
Let $S$ be a semilattice. Then $\ell^1(S)$ is AMNM.
\end{thm}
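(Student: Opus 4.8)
The plan is to reduce the problem to a finite-dimensional statement about nearly-idempotent, nearly-orthogonal-or-comparable families of complex numbers, exploiting the fact that $\ell^1(S)$ has a natural dense span of idempotents $\{\delta_s : s\in S\}$. Fix $\veps>0$; we must produce $\delta>0$ such that every $\delta$-multiplicative $\psi\in\ell^1(S)^*$ lies within $\veps$ of a character. By the correspondence recorded in Section~\ref{s:preliminaries}, such a $\psi$ is given by a bounded function $f:S\to\Cplx$ with $\norm{f}_\infty\le\norm{\psi}$, and its multiplicative defect is $\sup_{x,y}\abs{f(x)f(y)-f(xy)}\le\delta$; the target characters are the indicator functions $1_F$ of filters $F\subseteq S$ (Lemma~\ref{l:which-sets}), so the goal is to find a filter $F$ with $\norm{f-1_F}_\infty\le\veps$.

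First I would control the norm: taking $x=y=s$ gives $\abs{f(s)^2-f(s)}\le\delta$, so for $\delta$ small each value $f(s)$ lies in a small disc around $0$ or around $1$; in particular $\norm{f}_\infty\le 1+O(\delta)$, so there is no need for an \emph{a priori} bound on $\norm{\psi}$ (this is what makes the unweighted case \emph{uniformly} AMNM, consistent with the discussion after Definition~\ref{d:unif-AMNM}). Define $F=\{s\in S : \abs{f(s)-1}<1/2\}$, the set where $f$ is ``close to $1$''; for $\delta<1/4$ this is a clean dichotomy, every $s$ being either in $F$ or having $\abs{f(s)}<1/2$. Next I would check that $F$ is (almost) a filter: if $x,y\in F$ then $f(xy)$ is within $\delta$ of $f(x)f(y)$, which is close to $1$, so $xy\in F$; and if $xy\in F$ while, say, $y\in F$ but $x\notin F$, then $f(x)$ is near $0$, so $f(x)f(y)$ is near $0$, forcing $f(xy)$ near $0$, contradiction — hence $F$ is genuinely closed under the two filter operations. (Upwards closure: if $xy\in F$ and $y\notin F$ is impossible as just shown, and the definition of filter only needs the implication $xy\in F\Rightarrow x\in F$, which follows the same way.) So $F$ is an honest filter and $1_F$ is a character.

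It then remains to show $\norm{f-1_F}_\infty\le\veps$, i.e.\ that on $F$ the value $f(s)$ is within $\veps$ of $1$ and off $F$ it is within $\veps$ of $0$. The off-$F$ estimate is immediate from $\abs{f(s)^2-f(s)}\le\delta$ once $\abs{f(s)}<1/2$: this forces $\abs{f(s)}\le\delta/(1-\abs{f(s)})\le 2\delta$. For $s\in F$, I would argue that $f(s)$ is close to $1$: writing $f(s)=1+\eta$ with $\abs{\eta}<1/2$, idempotency gives $\abs{(1+\eta)^2-(1+\eta)}=\abs{\eta}\,\abs{1+\eta}\le\delta$, hence $\abs{\eta}\le\delta/(1-1/2)=2\delta$. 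Thus choosing $\delta=\veps/2$ (and $\delta<1/4$) suffices, and in fact the single idempotent relation $f(s)^2=f(s)+O(\delta)$ at each point already does all the work — the multiplicative defect on products $f(x)f(y)-f(xy)$ is only needed to verify that $F$ is a filter, not to get the pointwise approximation.

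The main obstacle I anticipate is not the estimates, which are routine, but making sure the definition of $F$ interacts correctly with \emph{all three} filter axioms simultaneously (non-emptiness being the only subtle one: if $f\equiv 0$ up to $\delta$ then $F=\varnothing$ and the zero map, not a character, is the nearby multiplicative map — but the zero map is multiplicative, so one should allow $1_\varnothing=0$, or equivalently note $\dist(\psi,\Mult)\le\norm{\psi-0}\le 2\delta$ directly in that degenerate case). Handling that degenerate case explicitly, and confirming that the convention ``a character is a \emph{non-zero} multiplicative functional'' in Definition~\ref{d:AMNM-alg} is nonetheless compatible — since if $F\ne\varnothing$ we get a genuine character, and if $F=\varnothing$ the trivial bound applies — is the one place where care is needed. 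Everything else is a two-line computation with the inequality $\abs{z^2-z}\le\delta,\ \abs{z}<1/2\Rightarrow\abs{z}\le 2\delta$ and its companion near $1$.
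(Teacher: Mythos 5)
Your proposal is correct and follows essentially the same strategy as the paper's Proposition~\ref{p:scalars}: define the candidate filter as the preimage under $\psi$ of a disc around $1$, use the diagonal relation $\abs{\psi(s)^2-\psi(s)}\le\delta$ to get the pointwise dichotomy and the closeness estimate, and then use the full multiplicative defect only to verify the filter axioms (closure under products and upward closure), with the empty-set case absorbed by the fact that $\Mult(A,\Cplx)$ contains the zero map. The only cosmetic difference is that the paper uses the sharper bound $\dist_\Cplx(z,\{0,1\})\le\rho(\veps)\veps$ from Lemma~\ref{l:in-C} to optimize constants, whereas you use the cruder but equally serviceable estimate $\abs{z}<1/2,\ \abs{z^2-z}\le\delta\Rightarrow\abs{z}\le 2\delta$.
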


This result, or equivalent reformulations, may have been implicitly known to previous authors. We shall give a complete proof, since it allows us to introduce some basic results and techniques which will be instructive for later arguments.

Define $f: [0,1/4] \to [0,1/2]$ by
\begin{equation}\label{eq:key-function}
f(t) = \frac{1}{2}\left(1-\sqrt{1-4t}\right).
\end{equation}
and define $\rho(t)=t^{-1}f(t)$ for $0<t\leq 1/4$. The following properties are straightforward to verify:
\begin{itemize}
\item $f$ is convex and monotone increasing, and $f(0)=0$;
\item $\rho$ is monotone increasing on $(0,1/4]$, and $\lim_{t\searrow 0} \rho(t) = 1$.
\end{itemize}

\begin{notn}
Given $r\geq 0$ and $w\in\Cplx$, denote the closed disc of radius $r$ and centre $w$ by~$\clD_w(r)$. If $r>0$, denote the open disc of radius $r$ and centre $w$ by~$\oD_w(r)$.
\end{notn}

\begin{lem}\label{l:in-C}
Let $z\in \Cplx$ and $\veps\in [0,1/4)$. If $z^2-z\in\clD_0(\veps)$, then $\dist_{\Cplx}(z,\{0,1\}) \leq \rho(\veps)\veps$.
\end{lem}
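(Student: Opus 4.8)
Write $f(\veps)=\tfrac12\bigl(1-\sqrt{1-4\veps}\bigr)$ as in \eqref{eq:key-function}; the starting remark is that $f(\veps)$ is exactly the smaller of the two roots of $t^2-t+\veps=0$, so that
\[
f(\veps)\bigl(1-f(\veps)\bigr)=\veps,\qquad 0\le f(\veps)<\tfrac12<1-f(\veps)\quad(0\le\veps<\tfrac14),
\]
the two roots being real and distinct since the discriminant $1-4\veps$ is positive. Since $\rho(\veps)\veps=f(\veps)$ by definition of $\rho$, the conclusion to be proved is simply $\dist_\Cplx(z,\{0,1\})\le f(\veps)$. The plan is to avoid ever extracting a complex square root: instead I will work directly with the factorisation $z^2-z=z(z-1)$ and with moduli.

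The case $\veps=0$ is trivial, since then $z(z-1)=0$ forces $z\in\{0,1\}$; so assume $\veps>0$. Next I would exploit the involution $z\mapsto 1-z$, which fixes the quantity $z^2-z$ and interchanges the points $0$ and $1$; applying it if necessary, we may assume \WLOG\ that $\abs{z}\le\abs{z-1}$, so that $\dist_\Cplx(z,\{0,1\})=\abs{z}$. Put $r=\abs{z}$. From the hypothesis, $r\,\abs{z-1}=\abs{z(z-1)}=\abs{z^2-z}\le\veps$, and since $r\le\abs{z-1}$ this already yields the a~priori bound $r^2\le\veps<\tfrac14$, hence $r<\tfrac12$; in particular $r<1$, so the reverse triangle inequality gives $\abs{z-1}\ge 1-r$.

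Feeding this back into $r\,\abs{z-1}\le\veps$ produces $r(1-r)\le\veps$, i.e.\ $r^2-r+\veps\ge 0$. The quadratic $t\mapsto t^2-t+\veps$ is strictly negative on the open interval $\bigl(f(\veps),1-f(\veps)\bigr)$ between its roots, so $r$ lies outside that interval; but $r<\tfrac12<1-f(\veps)$, which leaves only $r\le f(\veps)$. That is the assertion. I do not expect a genuine obstacle here: the only points requiring care are the branch-free bookkeeping (handled by the factorisation together with $\abs{z-1}\ge 1-\abs{z}$) and the exclusion of the "wrong'' root $1-f(\veps)$, which is exactly what the cheap preliminary estimate $r^2\le\veps<\tfrac14$ buys us. It is worth remarking, to motivate the choice of $f$, that the bound is sharp: taking $z=-f(\veps)$ gives $z^2-z=-\veps$ with $\dist_\Cplx(z,\{0,1\})=f(\veps)$ exactly.
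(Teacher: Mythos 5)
Your proof is correct, and it takes a genuinely different route from the paper's. The paper extracts a complex square root of $1-4w$ (with $w=z-z^2$) and then controls $\abs{1-\sqrt{1-4w}}$ by observing that the Taylor series of $1-\sqrt{1-4w}$ about $0$ has non-negative coefficients, so that its modulus is dominated by the value at $\abs{w}$; this gives $\dist_\Cplx(z,\{0,1\})\le f(\abs{w})\le f(\veps)$. You instead factor $z^2-z=z(z-1)$, exploit the symmetry $z\mapsto 1-z$ to reduce to $\dist_\Cplx(z,\{0,1\})=\abs{z}=:r$, bootstrap $r^2\le r\abs{z-1}\le\veps$ to get $r<\tfrac12$, use the reverse triangle inequality to get $r(1-r)\le\veps$, and then read off $r\le f(\veps)$ from the sign pattern of the real quadratic $t^2-t+\veps$. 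The payoff of your route is that it never leaves $\mathbb{R}_{\ge 0}$: no branch of the square root needs to be chosen, and no power-series positivity argument is invoked — both of which the paper's proof has to handle (and handles somewhat tersely). The paper's version, on the other hand, makes the role of $f$ as a convex increasing function with $f(0)=0$ more visible, which is used again in Proposition~\ref{p:scalars}. One small slip in a peripheral remark: the sharpness witness should be $z=f(\veps)$, not $z=-f(\veps)$; with $z=f(\veps)$ one has $z^2-z=-f(\veps)(1-f(\veps))=-\veps$ and $\dist_\Cplx(z,\{0,1\})=f(\veps)$, whereas $z=-f(\veps)$ gives $z^2-z=f(\veps)(1+f(\veps))$, which is not $\pm\veps$.
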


\begin{proof}
Put $w=z-z^2$. Then $\left(z-\frac{1}{2}\right)^2 = \frac{1}{4}(1-4w)$, and so
$z- \frac{1}{2} = \pm \frac{1}{2}\sqrt{1-4w}$ (taking the branch of the square root function for which $\sqrt{1-4w}\to 1$ as $w\to 0$).
It follows that
\[ \dist_{\Cplx}(z,\{0,1\}) \leq \Abs{\frac{1}{2}-\frac{1}{2}\sqrt{1-4w} } .\]
The Taylor expansion (about $0$) of the function $w\mapsto 1-\sqrt{1-4w}$ has non-negative coefficients. Hence $\dist_{\Cplx}(z,\{0,1\})\leq f(\abs{w})$, and the rest follows from our earlier observations.
\end{proof}

Theorem~\ref{t:AMNM-no-weight} follows immediately from the following technical result.
\begin{prop}\label{p:scalars}
Let $S$ be a semilattice, and let $\psi:S \to \Cplx$ satisfy $\diff(\psi) < 1/5$.
Let
\[ S_1 = \psi^{-1}\left(\oD_1(7/25)\right) = \{ e \in S \st \abs{\psi(e)-1} < 7/25 \}, \]
and let $\chi$ be the indicator function of $S_1$. Then $\chi$ is a multiplicative function $S\to \Cplx$, satisfying
\[
\norm{\psi-\chi}_\infty \leq \frac{7}{5} \diff(\psi).
\]
\end{prop}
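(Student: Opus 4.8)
The plan is to run Lemma~\ref{l:in-C} pointwise. For each $e\in S$, idempotency gives $e=e^2$, so $\diff(\psi)<1/5$ yields $\abs{\psi(e)^2-\psi(e)}\leq\diff(\psi)<1/5$. Taking $\veps=\diff(\psi)$ in Lemma~\ref{l:in-C} (which is legitimate since $1/5<1/4$), we get $\dist_{\Cplx}(\psi(e),\{0,1\})\leq\rho(\diff(\psi))\,\diff(\psi)$ for every $e\in S$. Using $\rho$ monotone increasing on $(0,1/4]$ and the crude bound $\rho(1/5)$, one checks numerically that $\rho(1/5)\leq 7/5$, so $\dist_{\Cplx}(\psi(e),\{0,1\})\leq\tfrac{7}{5}\diff(\psi)<\tfrac{7}{5}\cdot\tfrac15=\tfrac{7}{25}$ for all $e$. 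This immediately gives the norm estimate $\norm{\psi-\chi}_\infty\leq\tfrac{7}{5}\diff(\psi)$ once we know $\chi$ is the right "rounding": for $e\in S_1$ we have $\abs{\psi(e)-1}<7/25$, and since $\psi(e)$ is within $7/25<1/2$ of $\{0,1\}$, it must be within $7/5\cdot\diff(\psi)$ of $1$; for $e\notin S_1$, $\psi(e)$ is within $7/25$ of $1$ or of $0$, but not of $1$, hence within $7/5\cdot\diff(\psi)$ of $0=\chi(e)$. (Here one uses that the two discs $\oD_0(7/25)$ and $\oD_1(7/25)$ are disjoint, so the dichotomy "close to $0$" vs.\ "close to $1$" is unambiguous.)

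The remaining, and main, point is to show $\chi$ is multiplicative, i.e.\ that $S_1$ is a sub-semilattice that is also upward-... more precisely, that $\mathbf{1}_{S_1}(xy)=\mathbf{1}_{S_1}(x)\mathbf{1}_{S_1}(y)$ for all $x,y\in S$. First I would show $S_1$ is closed under products: if $x,y\in S_1$ then $\psi(x),\psi(y)$ both lie in $\oD_1(7/25)$, hence (by the above) within $\tfrac{7}{5}\diff(\psi)$ of $1$, so $\abs{\psi(x)\psi(y)-1}$ is small; combined with $\abs{\psi(xy)-\psi(x)\psi(y)}\leq\diff(\psi)\,\om\cdots$ — in the unweighted normalization simply $\leq\diff(\psi)$ — we get $\abs{\psi(xy)-1}<7/25$ provided the constants work out, i.e.\ $2\cdot\tfrac{7}{5}\diff(\psi)+(\tfrac{7}{5}\diff(\psi))^2+\diff(\psi)<7/25$, which holds since $\diff(\psi)<1/5$. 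The harder inclusion is the converse: if $xy\in S_1$, then $x\in S_1$ and $y\in S_1$. For this, note $x\cdot(xy)=xy$ by idempotency and commutativity, so $\abs{\psi(x)\psi(xy)-\psi(xy)}\leq\diff(\psi)$; since $\abs{\psi(xy)-1}<7/25$ it follows that $\abs{\psi(xy)}\geq 1-7/25=18/25$, whence $\abs{\psi(x)-1}=\abs{\psi(x)\psi(xy)-\psi(xy)}/\abs{\psi(xy)}\leq\diff(\psi)/(18/25)=\tfrac{25}{18}\diff(\psi)<\tfrac{25}{18}\cdot\tfrac15=\tfrac{5}{18}<\tfrac{7}{25}$ — wait, $5/18\approx0.278>7/25=0.28$ is borderline, so I would instead use the sharper pointwise bound: $\psi(x)$ is already within $\tfrac75\diff(\psi)$ of $\{0,1\}$, and $\abs{\psi(x)-1}\leq\tfrac{25}{18}\diff(\psi)<1/2$ rules out the value $0$, so in fact $\psi(x)\in\oD_1(7/25)$, i.e.\ $x\in S_1$; symmetrically $y\in S_1$.

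Putting these together, $\mathbf{1}_{S_1}$ is multiplicative on $S$, hence extends to a character $\chi$ of $\ell^1(S)$, and the displayed norm bound was established above; Theorem~\ref{t:AMNM-no-weight} then follows by taking $\delta=\min(1/5,5\veps/7)$. The main obstacle, as indicated, is bookkeeping the numerical constants so that all four implications ("close to $\{0,1\}$", "$S_1$ product-closed", "$S_1$ downward-factor-closed", disjointness of the two discs) go through simultaneously with the single threshold $\diff(\psi)<1/5$ and the single output constant $7/5$; the choice of radius $7/25$ for $S_1$ is exactly what makes this consistent, and I would verify $\rho(1/5)\leq 7/5$ and the two quadratic inequalities above as the only real computations.
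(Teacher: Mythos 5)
Your overall strategy matches the paper's: reduce via Lemma~\ref{l:in-C} to the fact that $\psi(e)$ lies in $\oD_0(7/25)\cup\oD_1(7/25)$ for every $e$, get the norm estimate from that dichotomy, and then verify that $S_1$ is product-closed and that $xy\in S_1$ forces $x,y\in S_1$. Two of your numerical claims need attention, one cosmetic and one a genuine gap.

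The genuine gap is in the product-closure step. You claim that
\[
2\cdot\tfrac{7}{5}\diff(\psi)+\bigl(\tfrac{7}{5}\diff(\psi)\bigr)^2+\diff(\psi)<\tfrac{7}{25}
\]
follows from $\diff(\psi)<1/5$. It does not: as $\diff(\psi)\to 1/5$ the left side tends to $\tfrac{14}{25}+\tfrac{49}{625}+\tfrac{1}{5}=\tfrac{524}{625}\approx 0.84$, far above $7/25=0.28$. A direct upper bound on $\abs{\psi(xy)-1}$ is simply not available with these constants. The fix --- and what the paper does --- is to again use the dichotomy you already established: you know $\psi(xy)\in\oD_0(7/25)\cup\oD_1(7/25)$, so it suffices to rule out the disc around $0$. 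For $x,y\in S_1$ we have $\abs{\psi(x)},\abs{\psi(y)}>18/25$, hence
\[
\abs{\psi(xy)}\;\geq\;\abs{\psi(x)\psi(y)}-\diff(\psi)\;>\;\bigl(\tfrac{18}{25}\bigr)^2-\tfrac{1}{5}\;=\;\tfrac{199}{625}\;>\;\tfrac{7}{25},
\]
so $\psi(xy)\notin\oD_0(7/25)$ and therefore $xy\in S_1$. Without this indirect argument, your product-closure step fails.

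The cosmetic issue is in the downward-factor step: you worry that $5/18$ might exceed $7/25$, but in fact $5/18\approx 0.2778<0.28=7/25$, so the direct estimate $\abs{\psi(x)-1}\leq\tfrac{25}{18}\diff(\psi)<5/18<7/25$ already closes that case with no detour needed. (Your workaround via the dichotomy is also correct, just unnecessary.) Once the product-closure step is repaired as above, the proof goes through and coincides with the paper's argument.
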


\begin{proof}
We first note that
\begin{equation}\label{eq:rho(1/5)}
\rho\left(\frac{1}{5}\right) = \frac{5}{2} \left( 1- \sqrt{\frac{1}{5}}\right) = \frac{1}{2}(5-\sqrt{5}) < \frac{1}{2}(5-2.2) = \frac{7}{5}\,.
\end{equation}
Hence, by Lemma~\ref{l:in-C}, we have
\[ \sup_{e\in S} \min( \abs{\psi(e)-1}, \abs{\psi((e)}) \leq \sup_{e\in S} \frac{7}{5}\abs{\psi(e)^2-\psi(e)} =\frac{7}{5}\diff(\psi) < \frac{7}{25} .\]
Set $S_0= \psi^{-1}(\oD_0(7/25))$. Then $S=S_0\sqcup S_1$, so that the previous inequality immediately yields $\norm{\psi-\chi}_\infty \leq (7/5)\norm{\psi^2-\psi}_{\infty} = (7/5)\diff(\psi)$ (see~\eqref{eq:defect-formula}).

To prove $\chi$ is multiplicative, it suffices by Lemma~\ref{l:which-sets} to show that $S_1$ is either empty or a filter. Suppose that $S_1$ is non-empty.
If $e,f\in S_1$, then
$\abs{\psi(e)\psi(f)} > (18/25)^2 > 1/2$.
Hence
\[
\abs{\psi(ef)} > \frac{1}{2} - \diff(\psi) > \frac{1}{2} - \frac{1}{5} > \frac{7}{25}, \]
forcing $ef\in S\setminus S_0 = S_1$.
If $e\succeq f\in S$ and $f\in S_1$, then since $\abs{\psi(f)}^{-1} < 25/18$, we have
\[ \abs{\psi(e)-1} \leq \frac{25}{18}\abs{\psi(e)\psi(f)-\psi(f)} \leq \frac{25}{18}\diff(\psi) < \frac{5}{18} < \frac{7}{25} , \]
forcing $e\in S_1$. Thus $S_1$ is a filter, and the proof is complete.
\end{proof}

\subsection{A weighted semilattice which is not AMNM}

\begin{thm}\label{t:not-AMNM}
There exists a locally finite semilattice $T$ and a submultiplicative weight $\om$ on $T$ such that $\lom(T)$ is not AMNM.
\end{thm}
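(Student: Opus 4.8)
The plan is to construct $T$ as a disjoint union of finite "gadgets" $T_n$, one for each $n\in\Nat$, together with an adjoined maximum element (or by working in the unitization if that is cleaner), and to put a weight $\om$ on $T$ that grows rapidly on each $T_n$ as $n\to\infty$. Each gadget will carry a function $\psi_n$ that is approximately multiplicative, with multiplicative defect tending to $0$ as $n\to\infty$, but whose distance to the nearest genuine character is bounded below by a fixed positive constant. Assembling the $\psi_n$ into a single $\om$-bounded function $\psi$ on $T$ will then witness the failure of AMNM: $\diff_\om(\psi)$ can be made as small as we like by only letting $\psi$ be nonzero on gadgets $T_n$ with $n$ large, yet $\psi$ stays a fixed distance from $\Mult(\lom(T),\Cplx)$.

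The key point is that, by Lemma~\ref{l:which-sets}, the characters of $\lom(T)$ are exactly the indicator functions of filters of $T$, so "near a character" means "$\ell^\infty$-close to an indicator function of a filter". The gadget $T_n$ should therefore be a finite semilattice on which there exists a scalar-valued function $\psi_n$ taking values uniformly bounded away from both $0$ and $1$ at several points, so that no $\{0,1\}$-valued function — let alone the indicator of a filter — can be uniformly close to it, while nevertheless $|\psi_n(x)\psi_n(y)-\psi_n(xy)|$ is small relative to $\om(x)\om(y)$ for all $x,y\in T_n$. The natural candidate is a short chain or a small "diamond": e.g. take $T_n$ to have a bottom element $b_n$ and some elements above it, assign $\psi_n(b_n)$ a value like $1/2$ and the elements above it values near $1$, and exploit that the only products landing on $b_n$ involve incomparable elements, where the large weight $\om(x)\om(y)$ in the denominator of \eqref{eq:defect-formula} absorbs the error $|\psi_n(x)\psi_n(y)-\psi_n(b_n)|$. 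One then checks that any filter of $T_n$ either contains $b_n$ (forcing the indicator to be $1$ there, far from $1/2$) or does not (forcing it to be $0$ on the elements above, if those are forced into the filter, again a contradiction), so $\dist_\infty(\psi_n,\{\text{indicators of filters of }T_n\})\geq 1/2 - o(1)$, uniformly in $n$.

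Concretely, I would carry out the following steps. (1) Fix the gadget: define a finite semilattice $T_n$ and the function $\psi_n:T_n\to\Cplx$, being explicit about which products equal the bottom element. (2) Choose the weight: set $\om$ to be $1$ except on each $T_n$, where it grows (say exponentially in $n$, or just large enough), and verify submultiplicativity — here one uses that the product of two elements is always "lower" in the semilattice order, so submultiplicativity is easy provided $\om$ is, say, constant on each gadget and large. (3) Estimate $\diff_{\om}(\psi_n)$: the only troublesome pairs $(x,y)$ are those with $xy=b_n$, and for those the numerator is $O(1)$ while the denominator is $\om(x)\om(y)\to\infty$, so $\diff_\om(\psi_n)\to 0$; pairs with $xy\in\{x,y\}$ are handled by making $\psi_n$ nearly multiplicative there by design. (4) Lower bound: enumerate the filters of $T_n$ and check each is $\ell^\infty$-distance $\geq c$ from $\psi_n$ for an absolute constant $c>0$. (5) Assemble: given $\delta>0$, pick $N$ with $\diff_\om(\psi_n)<\delta$ for $n\geq N$, define $\psi$ to agree with $\psi_N$ on $T_N$ and be $0$ elsewhere (checking the cross-gadget products contribute nothing or are harmless), so $\diff_\om(\psi)<\delta$ but $\dist_{\lom(T)^*}(\psi,\Mult)\geq c$, contradicting AMNM. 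Also remark that $T$ is locally finite since each $T_n$ is finite and the order structure across gadgets is trivial.

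The main obstacle I expect is step (4) combined with the global bookkeeping in step (5): getting a gadget simple enough that its lattice of filters is easy to list, yet rich enough that \emph{every} filter is a fixed distance from $\psi_n$, and simultaneously ensuring the adjoined top element (needed to make $T$ a single semilattice and to control cross-gadget products) does not accidentally create a filter close to $\psi$. A secondary subtlety is verifying submultiplicativity of $\om$ globally once the gadget weights and the top element's weight are all in play; this should be routine but needs care, and may dictate adjoining a zero element rather than a top element, or passing to the unitization. I would also double-check that the failure is genuinely a failure of AMNM and not merely of \emph{uniform} AMNM — i.e. that no single $\delta(\veps)$ can work — which the construction above gives, since for fixed $\veps=c/2$ no $\delta>0$ suffices.
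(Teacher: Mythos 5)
Your high-level plan (finite gadgets with a fixed lower bound on distance-to-characters but shrinking multiplicative defect, glued along an absorbing zero) is the same outline as the paper's proof, and you correctly anticipated the need to adjoin a zero rather than a top element. But the mechanism you propose for producing the gadget functions $\psi_n$ has a gap that the paper's construction specifically avoids.

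You propose giving $\psi_n$ a value like $\tfrac12$ at some point $b_n$, so that $\psi_n$ is far from every $\{0,1\}$-valued function. The trouble is that both the defect and the distance are measured in $\om$-weighted norms, and at the self-product $(b_n,b_n)$ you pick up a defect contribution $\abs{\psi_n(b_n)^2-\psi_n(b_n)}/\om(b_n)^2 \approx \tfrac14 \om(b_n)^{-2}$, while the distance contribution at $b_n$ to any filter indicator is $\approx \tfrac12 \om(b_n)^{-1}$. Forcing the former to $0$ forces $\om(b_n)\to\infty$, which also forces the latter to $0$; forcing the latter to stay bounded below keeps $\om(b_n)$ bounded, which keeps the defect bounded away from $0$. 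So the point where you put the non-$\{0,1\}$ value cannot contribute to the lower bound, and if every such point has small weighted distance then you have not escaped the reach of Proposition~\ref{p:scalars}. The paper sidesteps this entirely by taking $\psi_n$ to be $\{0,1\}$-valued (the indicator of the free semilattice $\free{F_n}$ minus its zero $\theta_n$), so there is no self-product defect at all; the failure of multiplicativity is purely combinatorial ($\psi_n^{-1}(1)$ is not a filter), occurring only at pairs $(e,f)$ with $e,f\neq\theta_n$ but $ef=\theta_n$.

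The second missing ingredient is the shape of the weight. The paper does \emph{not} make $\om$ uniformly large on the gadget; it sets $\om$ equal to a small constant $C$ both on the generators (length $1$) and, crucially, at the zero $\theta_n$, with $\om = C^{\gm_n(e)}$ in between. The point is that if $e,f\neq\theta_n$ and $ef=\theta_n$, then together $e$ and $f$ must involve all $\abs{F_n}$ generators, so $\gm_n(e)+\gm_n(f)\geq\abs{F_n}$ and hence $\om(e)\om(f)\geq C^{\abs{F_n}}\to\infty$; meanwhile the distance lower bound comes from either $\theta_n$ or some generator $s_0$, both of which have weight only $C$. Your ``short chain or small diamond'' gadgets cannot replicate this, because they have bounded breadth, and the paper's own Theorem~\ref{t:is-AMNM} together with Example~\ref{eg:cases}\ref{li:two} shows that any semilattice built from bounded-breadth gadgets will be AMNM for \emph{every} weight. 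The growing breadth of the $\free{F_n}$ is therefore not a convenience but a necessity, and it is exactly what lets the ``middle'' of the gadget carry the large weight while the top (generators) and bottom ($\theta_n$) stay light.
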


The counter-example $T$ will be built out of copies of free semilattices.
Given a set $S$, let $\free{S}$ denote the \dt{free semilattice} generated by~$S$; this can be identified with the set of all non-empty finite subsets of $S$, with semigroup product given by union. For our purposes it is more convenient to regard elements of $\free{S}$ as reduced words in the generators.
There is a natural length function $\gamma_S: \free{S} \to \Nat$, where $\gamma_S(x)$ is the minimum number of elements in $S$ needed to generate $x$.
A little thought shows that $\gm_S(xy)\leq \min(|S|, \gm_S(x)+\gm_S(y))$ for all $x,y\in \free{S}$.

If $S$ is a finite set, then $\free{S}$ has a zero (i.e. minimum) element, namely the product of all elements of $S$; we denote this element by~$\theta_S$.
 Note that $\gamma_S(\theta_S)=|S|$. 

\begin{prop}\label{p:building-block}
Let $S$ be a finite set of cardinality $\geq 2$. 
Fix a constant $C>1$, and define $\om_S:\free{S}\to [1,\infty)$ by
\[ \om_S(e) = \left\{ \begin{aligned}
        C^{\gm_S(e)} & \quad\text{if $e\neq \theta_S$} \\
        C       & \quad\text{if $e=\theta_S$}
\end{aligned} \right. \]
Then $\om_S$ is a submultiplicative weight on $\free{S}$. Moreover, if we define $\psi: \free{S} \to \{0,1\}$~by
\[ \psi(e) = \left\{ \begin{aligned}
        1 & \quad\text{if $e\neq\theta_S$,} \\
        0 & \quad\text{if $e=\theta_S$,}
        \end{aligned}\right. \]
then:
\begin{YCnum}
\item as an element of $\ell^1_{\om_S}(\free{S})^*$, $\diff(\psi)\leq C^{-|S|}$\/;
\item if $\phi:\free{S} \to \{0,1\}$ is multiplicative, then
\[ \sup_{e\in \free{S}} \om_S(e)^{-1} \abs{ \psi(e) - \phi(e) } \geq C^{-1}\,. \]
\end{YCnum}
\end{prop}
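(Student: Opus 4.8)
The plan is to verify the three assertions in turn; each reduces to bookkeeping with the length function $\gm_S$, using only the two facts already recorded, namely $\gm_S(xy)\leq\gm_S(x)+\gm_S(y)$ and $\gm_S(\theta_S)=|S|$, together with the fact that $\theta_S$ is absorbing.

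\emph{Submultiplicativity of $\om_S$.} Given $e,f\in\free{S}$, I would split according to whether $ef=\theta_S$. If $ef\neq\theta_S$, then neither $e$ nor $f$ equals $\theta_S$ (since $\theta_S$ is a zero), so $\om_S(e)\om_S(f)=C^{\gm_S(e)+\gm_S(f)}\geq C^{\gm_S(ef)}=\om_S(ef)$, using $C>1$ and $\gm_S(ef)\leq\gm_S(e)+\gm_S(f)$. If $ef=\theta_S$, then $\om_S(ef)=C$, and one checks $\om_S(e)\om_S(f)\geq C$ in each remaining case: if $e=f=\theta_S$ the product is $C^2$; if exactly one factor equals $\theta_S$ the product is at least $C\cdot 1=C$; and if neither factor equals $\theta_S$ then $\gm_S(e)+\gm_S(f)\geq\gm_S(\theta_S)=|S|\geq 2$, so the product is $C^{\gm_S(e)+\gm_S(f)}\geq C^{|S|}\geq C$.

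\emph{Claim (i).} Using the defect formula \eqref{eq:defect-formula}, I would observe that, since $\psi$ takes values in $\{0,1\}$, the numerator $\abs{\psi(x)\psi(y)-\psi(xy)}$ is nonzero precisely when $\psi(x)=\psi(y)=1$ but $\psi(xy)=0$, i.e.\ when $x\neq\theta_S$, $y\neq\theta_S$, and $xy=\theta_S$; in that case the numerator equals $1$. For such a pair, $\gm_S(x)+\gm_S(y)\geq\gm_S(xy)=\gm_S(\theta_S)=|S|$, hence $\om_S(x)\om_S(y)=C^{\gm_S(x)+\gm_S(y)}\geq C^{|S|}$, and the corresponding term in the supremum \eqref{eq:defect-formula} is at most $C^{-|S|}$. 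Therefore $\diff(\psi)\leq C^{-|S|}$.

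\emph{Claim (ii).} Let $\phi:\free{S}\to\{0,1\}$ be multiplicative. If $\phi(\{s\})=0$ for some $s\in S$, then $\{s\}\neq\theta_S$ (as $|S|\geq 2$), so $\psi(\{s\})=1$ and $\om_S(\{s\})=C^{\gm_S(\{s\})}=C$, giving $\om_S(\{s\})^{-1}\abs{\psi(\{s\})-\phi(\{s\})}=C^{-1}$. Otherwise $\phi(\{s\})=1$ for every $s\in S$, and multiplicativity yields $\phi(\theta_S)=\phi\bigl(\prod_{s\in S}\{s\}\bigr)=\prod_{s\in S}\phi(\{s\})=1$, whereas $\psi(\theta_S)=0$ and $\om_S(\theta_S)=C$, so $\om_S(\theta_S)^{-1}\abs{\psi(\theta_S)-\phi(\theta_S)}=C^{-1}$. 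Either way the supremum over $\free{S}$ is at least $C^{-1}$, as claimed.

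\emph{The main point.} There is no genuine obstacle here; the content lies entirely in the design of the weight. What is worth flagging is the deliberate mismatch: setting $\om_S$ equal to $C$ rather than $C^{|S|}$ at the single point $\theta_S$ is exactly what forces any authentically multiplicative $\{0,1\}$-valued map to incur a cost of order $C^{-1}$ in matching $\psi$, while $\psi$ itself has defect only of order $C^{-|S|}$. Assembling such building blocks over $S$ with $|S|\to\infty$ then produces the gap needed in the proof of Theorem~\ref{t:not-AMNM}.
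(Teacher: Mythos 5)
Your proposal is correct and follows essentially the same route as the paper's proof: case analysis on whether $\theta_S$ is involved, bounding the defect using $\gm_S(e)+\gm_S(f)\geq|S|$ when $e,f\neq\theta_S$ but $ef=\theta_S$, and a two-case dichotomy on $\phi$ (whether it kills some generator or not) for part (ii). If anything, your treatment of submultiplicativity is slightly more explicit than the paper's one-line appeal to subadditivity of $\gm_S$, since the bound $\om_S(e)\leq C^{\gm_S(e)}$ goes the wrong way at $\theta_S$ and one must separately observe that $\theta_S$ is absorbing to dispose of those cases, which you do.
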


\begin{proof}
Since $\gm_S$ is subadditive and $\om_S(e)\leq C^{\gm_S(e)}$ for all $e\in \free{S}$, it is clear that $\om_S$ is submultiplicative.

To prove~(i), let $e,f\in\free{S}$. If $ef\neq\theta_S$ then
$\psi(e)\psi(f) = 1 = \psi(ef)$. If $e=\theta_S$ or $f=\theta_S$ then
$\psi(e)\psi(f) = 0 = \psi(ef)$.
The only remaining cases are those where $e,f\neq\theta_S$ while $ef=\theta_S$. In such cases, we have $\gm_S(e)+\gm_S(f)\geq |S|$, and so
\[
\abs{\psi(e)\psi(f)-\psi(ef)} = 1 \leq \delta C^{-|S|} \om(e)\om(f).
\]
Thus in all cases, $\abs{\psi(e)\psi(f)-\psi(ef)} \leq C^{-|S|} \om(e)\om(f)$.

To prove (ii), let $\phi: \free{S} \to \{0,1\}$ be multiplicative.
If $\phi(\theta_S)=1$ then
\[ \om(\theta_S)^{-1}\abs{\phi(\theta_S)-\psi(\theta_S)} = C^{-1} \,.\]
If not, then $\phi(\theta_S)=0$. Therefore, since $\phi$ is multiplicative and $S$ generates $\free{S}$, there exists $s_0\in S$ with $\phi(s_0)=0$, and so
\[ \om(s_0)^{-1}\abs{\phi(s_0)-\psi(s_0)} = C^{-1}\,.\]
\end{proof}

If $(F_i)_{i\in\Ind}$ is a family of semilattices, 
consider the set $\{\theta\}\sqcup \coprod_{i\in \Ind} F_i$, where $\theta$ is a formal symbol. This can be made into a semilattice if we define the product as follows: $\theta$ is an absorbing zero element; the product of two elements in $F_i$ is their usual product; the product of elements in $F_i$ and $F_j$ is $\theta$ whenever $i\neq j$. We call this semilattice the \dt{orthogonal direct sum} of the family $(F_i)_{i\in\Ind}$.

\begin{proof}[Proof of Theorem~\ref{t:not-AMNM}]
Let $(F_n)_{n\geq 1}$ be a sequence of finite sets with $\abs{F_n}\nearrow \infty$.
Define $T$ to be the orthogonal direct sum of the family $(\free{F_n})_{n\in\Nat}$, with $\theta$ being the zero element of $T$.
To ease notation, denote the zero element of $\free{F_n}$ by $\theta_n$, and denote the length function of $\free{F_n}$ by $\gm_n$.

Fix a constant $C>1$, and define $\om:T \to [1,\infty)$ by
\[ 
\om(\theta) \defeq 1 \quad, \quad\om(\theta_n) \defeq C \;\text{for each $n\in\Nat$} \quad,\quad \om(e) \defeq C^{\gm_n(e)}  \quad\text{if $e\in \free{F_n}\setminus\{\theta_n\}$.}
\]
Define $\psi_n: T \to \{0,1\}$ by
\[ \psi_n(e) = \left\{ \begin{aligned}
        1 & \quad\text{ if $E\in S_n \setminus \{ \theta_n \}$,} \\
        0 & \quad\text{ otherwise.}
        \end{aligned}\right. \]

If $\phi:T \to \{0,1\}$ is multiplicative, then by part (ii) of Proposition~\ref{p:building-block},
\[ \norm{\psi_n-\phi} \geq \sup \{ \om(e)^{-1} \abs{\psi_n(e)-\phi(e)} \st e\in \free{F_n} \} \geq C^{-1}\,. \]
Hence,
\[ \dist_{\lom(T)^*}(\psi_n,\Mult(\lom(T),\Cplx))\geq C^{-1} \quad\text{for all $n$.} \]
On the other hand, by Proposition~\ref{p:building-block}(i),
\[ \diff_\om(\psi_n) = \sup\left\{ \frac{ \abs{\psi_n(e)\psi_n(f)-\psi_n(ef)}}{\om(e)\om(f)} \st e,f\in\free{F_n} \right\}
\leq C^{-|F_n|} \to 0. \]
Thus $\lom(T)$ is not AMNM.
\end{proof}

The example $(T,\om)$ was found while trying to prove that all weighted semilattice algebras are AMNM, and realizing that the attempted proof only worked when one could verify a certain technical condition on a given weighted semilattice $(S,\om)$. This condition, and the proof that it suffices to ensure $\lom(S)$ is AMNM, will be our next topic.

\subsection{Weighted semilattices which are AMNM}\label{s:AMNM-examples}
The following lemma is a substitute for Lemma~\ref{l:in-C}. It is less informative in the case of $\ell^1(S)$, but is more convenient in weighted cases.

\begin{lem}\label{l:easy}
Let $A$ be a Banach algebra, $e$ an idempotent in $A$, and $\psi\in A^*$. Then
\begin{equation}\label{eq:cheap-bound}
\min( \abs{\psi(e)}, \abs{1-\psi(e)}) \leq \diff(\psi)^{1/2} \norm{e}.
\end{equation}
\end{lem}

\begin{proof}
If $e=0$ this is trivial. If $e\neq 0$, observe that
\[ \diff(\psi) \geq \frac{\abs{\psi(e)-\psi(e)^2}}{\norm{e}^2} =
 \frac{\abs{\psi(e)}}{\norm{e}} \frac{\abs{1-\psi(e)}}{\norm{e}} \geq
\left( \frac{\min( \abs{\psi(e)}, \abs{1-\psi(e)}) }{\norm{e}}\right)^2  \,,\]
and \eqref{eq:cheap-bound} follows.
\end{proof}

By \eqref{eq:cheap-bound} and the definition of the norm on~$\lom(S)$, we see that if $\psi:S \to \Cplx$ is $\om$-bounded with $\diff_{\om}(\psi)\leq\delta$, then there exists an $\om$-bounded function $\phi:S \to \{0,1\}$ such that $\norm{\psi-\phi}_{\infty,\om^{-1}}\leq \delta^{1/2}$.
Since
\[ \begin{aligned}
\abs{\phi(s)\phi(t)-\phi(st)}
&  \leq \left\{ \begin{gathered}
    \abs{\phi(s)}\abs{\phi(t)-\psi(t)} + \abs{\phi(s)-\psi(s)}\abs{\psi(t)} \\
    + \abs{\psi(s)\psi(t)-\psi(st)} + \abs{\psi(st)-\phi(st)}
  \end{gathered} \right. \\
& \leq 1\cdot \delta^{1/2}\om(t) + \delta^{1/2}\om(s) \cdot (1+\delta^{1/2}\om(t))
   + \delta\om(s)\om(t) + \delta^{1/2}\om(st) \\
&  =   \delta^{1/2}\om(t) + \delta^{1/2}\om(s) + 2\delta\om(s)\om(t)  + \delta^{1/2}\om(st) \;,\\
 \end{aligned} \]
we have $\diff_\om(\phi) \leq 3\delta^{1/2}+2\delta$.
From this, routine arguments (which we omit) yield the following necessary and sufficient condition for $\lom(S)$ to be AMNM.

\begin{cor}\label{c:reduced-problem}
Let $(S,\om)$ be a weighted semilattice. \TFAE:
\begin{YCnum}
\item $\lom(S)$ is AMNM;
\item For any $\veps>0$, there exists $\delta>0$, such that whenever $\phi: S \to \{0,1\}$ satisfies
\[ \sup_{e,f\in S} \frac{\abs{\phi(ef)-\phi(e)\phi(f)}}{ \om(e)\om(f)} \leq \delta\;, \]
then there exists a subset $F\subseteq S$, either empty or a filter, such that
\[ \sup_{e\in S} \frac{\abs{ \chi_F(e)-\phi(e)}}{\om(e)} \leq \veps. \]
\end{YCnum}
\end{cor}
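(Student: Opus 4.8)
The statement is a routine "closing the loop" corollary: it asserts that the reduction already carried out in the displayed calculation preceding the statement — passing from an $\om$-bounded $\psi$ with small $\diff_\om(\psi)$ to a $\{0,1\}$-valued $\phi$ with $\diff_\om(\phi)$ controlled by $\diff_\om(\psi)^{1/2}$ — can be run in both directions. So I would prove (i)$\Rightarrow$(ii) and (ii)$\Rightarrow$(i) separately, each a short $\veps$–$\delta$ chase.

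\medskip
\noindent\emph{(ii)$\Rightarrow$(i).} Assume (ii) and let $\veps>0$ be given. First, \WLOG\ assume $\veps\le 1$ (shrinking $\veps$ only strengthens the conclusion). Apply (ii) with $\veps/2$ in place of $\veps$ to obtain $\delta_0>0$; then choose $\delta>0$ small enough that $\delta^{1/2}\le\veps/2$ and $3\delta^{1/2}+2\delta\le\delta_0$. Now suppose $\psi:S\to\Cplx$ is $\om$-bounded with $\diff_\om(\psi)\le\delta$. By Lemma~\ref{l:easy} applied to each idempotent $\delta_s\in\lom(S)$ (noting $\norm{\delta_s}=\om(s)$), there is an $\om$-bounded $\phi:S\to\{0,1\}$ with $\norm{\psi-\phi}_{\infty,\om^{-1}}\le\delta^{1/2}$, and the displayed four-term estimate before the statement gives $\diff_\om(\phi)\le 3\delta^{1/2}+2\delta\le\delta_0$. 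By (ii) there is $F\subseteq S$, empty or a filter, with $\norm{\chi_F-\phi}_{\infty,\om^{-1}}\le\veps/2$. By Lemma~\ref{l:which-sets}, $\chi_F\in\Mult(\lom(S),\Cplx)$ (if $F=\emptyset$ then $\chi_F=0\in\Mult$). Finally $\norm{\psi-\chi_F}_{\infty,\om^{-1}}\le\norm{\psi-\phi}_{\infty,\om^{-1}}+\norm{\phi-\chi_F}_{\infty,\om^{-1}}\le\delta^{1/2}+\veps/2\le\veps$. Since $\psi\mapsto\norm{\psi}_{\infty,\om^{-1}}$ is the dual norm on $\lom(S)^*$, this says $\dist_{\lom(S)^*}(\psi,\Mult(\lom(S),\Cplx))\le\veps$, i.e.\ $\lom(S)$ is AMNM.

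\medskip
\noindent\emph{(i)$\Rightarrow$(ii).} This direction is essentially immediate: every $\phi:S\to\{0,1\}$ that is $\delta$-multiplicative in the sense of (ii) is in particular $\om$-bounded (with $\norm{\phi}_{\infty,\om^{-1}}\le 1$ since $\om\ge 1$) and satisfies $\diff_\om(\phi)\le\delta$ as an element of $\lom(S)^*$, by the defect formula~\eqref{eq:defect-formula}. If $\lom(S)$ is AMNM, then given $\veps>0$ choose the corresponding $\delta>0$ from Definition~\ref{d:AMNM-alg}; then any such $\phi$ is within $\veps$ (in the dual norm, i.e.\ in $\norm{\cdot}_{\infty,\om^{-1}}$) of some $\chi\in\Mult(\lom(S),\Cplx)$. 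By Lemma~\ref{l:which-sets} every such $\chi$ is either $0$ or the indicator function of a filter $F$; taking $F=\emptyset$ in the former case, this is exactly the conclusion of (ii).

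\medskip
\noindent\textbf{Main obstacle.} There is no real obstacle — the content is entirely in Lemma~\ref{l:easy}, Lemma~\ref{l:which-sets}, and the pre-statement computation, all of which are available. The only things to be careful about are bookkeeping: (a) correctly matching the supremum/dual-norm reformulations so that "distance in $\lom(S)^*$" and "$\norm{\cdot}_{\infty,\om^{-1}}$-distance" are identified, via the isometric duality $\lom(S)^*\cong\ell^\infty_{\om^{-1}}(S)$; (b) allowing the degenerate case $F=\emptyset$ (equivalently $\phi\equiv 0$, equivalently the zero functional) uniformly throughout, since $0\in\Mult$; and (c) keeping the constants in the right order ($3\delta^{1/2}+2\delta$ controlled by the $\delta_0$ produced by (ii), and $\delta^{1/2}$ absorbed into $\veps$). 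This is why the authors were comfortable omitting it.
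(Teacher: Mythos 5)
Your proof is correct and is precisely the ``routine argument'' the paper alludes to just before the statement (the paper omits it): you combine Lemma~\ref{l:easy}, the displayed four-term estimate giving $\diff_\om(\phi)\le 3\delta^{1/2}+2\delta$, and Lemma~\ref{l:which-sets}, with the $\veps$--$\delta$ bookkeeping in the obvious way. The only implicit ingredient worth flagging is that $\om\ge 1$ on any semilattice (from submultiplicativity and idempotence), which you use tacitly to ensure $\phi$ is $\om$-bounded with $\norm{\phi}_{\infty,\om^{-1}}\le 1$; this is standard and does not affect the argument.
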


It turns out that a certain natural, structural condition on a semilattice $S$ will imply $\lom(S)$ is AMNM for \emph{any} choice of weight function $\om$.
This condition was rediscovered by the author in the course of the present investigations; it was subsequently pointed out (personal communication, see \cite{MO100971}) that it was already known by a standard name in the lattice-theoretic literature.

\begin{dfn}[The breadth of a semilattice]\label{d:UFG}
Let $E$ be a subset of a semilattice $S$. We define
\[ b_{\rm loc}(E) = \inf\{ n\in\Nat \st \gen[n]{E}=\gen{E}\},  \]
with the usual convention that the infimum of the empty set is $+\infty$.
The \dt{breadth of $S$} is defined to be $\sup_{E\subseteq S} b_{\rm loc}(E)$, and is denoted by $b(S)$.
\end{dfn}

Here, the sets $\gen[n]{E}$ and $\gen{E}$ are as defined in \eqref{eq:n-gen}.
Note that having finite breadth is not the same as being finitely generated as a semigroup.
Indeed, finitely generated semilattices are finite.

The following examples of semilattices with finite breadth are presumably well known to specialists, but since we were unable to find explicit references in the literature, details of the proofs are included. (For some discussion, see e.g.~\cite[\S IV.10]{Birkhoff_3rdcorr}.)

\begin{eg}
Let $S$ be the free semilattice on $n$ generators. Then $b(S)=n$.

\begin{proof}
Since $S$ has height $n$, it will follow from Example~\ref{eg:finite-height} below that
 $b(S)\leq n$. On the other hand, if $F$ denotes the set of generators of $S$, then $\gen[n]{F}=S$ while $\gen[n-1]{F}$ does not contain the minimal element of $S$, implying that $b(S)\geq b_{\rm loc}(F) > n-1$.
\end{proof}
\end{eg}

\begin{eg}
If $S$ has width $\leq n$ as a partially ordered set (i.e.~there exist $n$ chains in $S$ whose union is all of $S$) then $b(S)\leq n$.

\begin{proof}
Fix chains $C_1, \dots, C_n$ in $S$ such that $S=\bigcup_{i=1}^n C_i$. Let $E\subseteq S$ and $x\in \gen{E}$.
For some integer $k$ there exists $y_1,\dots, y_k\in E$ such that $x=y_1\dotsb y_k$. 
Partition $\{1,\dots,k\}$ into disjoint, non-empty subsets $J(1),\dots, J(m)$, where $m\leq n$, such that $y_r \in C_i$ for all $r\in J(i)$. For each $i$, the set $\{ y_r \st r \in J(i)\}$ is totally ordered (as a subset of $S$) and so has a least element, say $y_{s(i)}$. Then $\prod_{r\in J(i)} y_r = y_{s(i)}$, so
\[ x = y_{s(1)} \dots y_{s(m)} \in \gen[m]{E} \subseteq \gen[n]{E} \] 
and thus $b_{\rm loc}(E)\leq n$.)
\end{proof}
\end{eg}

\begin{eg}\label{eg:finite-height}
Let $n\geq 2$.
If $S$ has height $\leq n$ (i.e.~each chain in $S$ has cardinality $\leq n$) then $b(S)\leq n$.

\begin{proof}
Let $E\subseteq S$ and $x\in\gen{E}$.
For some integer $k$ there exists $y_1,\dots, y_k\in E$ such that $x=y_1\dotsb y_k$. Clearly
$y_1 \succeq y_1y_2 \succeq \dots \succeq y_1y_2\dotsb y_k$. Let
\[ J=\{ r \in \{2,\dots,k\} \st y_1\dotsb y_{r-1} \neq y_1\dotsb y_r \}, \]
and enumerate its elements in increasing order as $j_1<\dots <j_m$.
Since chains in $S$ have cardinality at most $n$, we have $m\leq n+1$, and thus $b_{\rm loc}(E)\leq n$.)
\end{proof}
\end{eg}

We need one more technical definition. The following concept is a slight improvement, suggested by the referee, of the author's original condition. However, the author takes the blame for the non-standard terminology.

\begin{dfn}\label{d:ref-improved-condition}
Let $(S,\om)$ be a weighted semilattice. For each $K>0$, let
\[ W_K=\{ x\in S \colon \om(x)\leq K\}.\]
We say that $(S,\om)$ is \dt{flighty} if, for each $K>0$,
\[ \sup \{ \om(y) \st y \in \gen{W_K} \} < \infty\,. \]
\end{dfn}
This condition is somewhat artificial, and is set up to make the proof of Theorem~\ref{t:is-AMNM} work. Let us first consider some examples.

\begin{eg}\label{eg:cases}\
\begin{YCnum}
\item\label{li:one} If $\sup_{x\in S} \om(x) < \infty$, then $(S,\om)$ is flighty.
\item\label{li:two} If $S$ has finite breadth $n$, then $(S,\om)$ is flighty for any weight $\om$; for given $K>0$, we have $\om(y)\leq K^n$ for all $y\in \gen{W_K}$.
\item Let $(T,\om)$ be the weighted semilattice constructed in the proof of Theorem~\ref{t:not-AMNM}. This example is not flighty: since (if $C$ is the constant used to define the weight in that example) we can for each $n$ find $x_1,\dots, x_n \in W_C$ such that $\om(x_1\dotsb x_n) = C^n$.
\end{YCnum} 
\end{eg}

We can now state the main result of this section. The proof we give incorporates a small simplification suggested by the referee, in line with his or her suggested modification of Definition~\ref{d:ref-improved-condition}.

\begin{thm}\label{t:is-AMNM}
Let $(S,\om)$ be a weighted semilattice. If $(S,\om)$ is flighty, then $\lom(S)$ is AMNM.
\end{thm}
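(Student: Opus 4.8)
The plan is to use the reformulation in Corollary~\ref{c:reduced-problem}, so it suffices to show the following: for each $\veps>0$ there is $\delta>0$ such that any $\phi:S\to\{0,1\}$ with multiplicative defect (in the $\om$-weighted sense) at most $\delta$ agrees, outside a set of small $\om^{-1}$-weighted size, with the indicator function of some filter (or with the zero function). First I would choose the cut-off carefully. Since $(S,\om)$ is flighty, fix $K>0$ large enough that $K^{-1}<\veps$, and let $L=\sup\{\om(y)\st y\in\gen{W_K}\}<\infty$; this $L$ depends only on $K$, hence only on $\veps$. Now take $\delta$ small — something like $\delta< K^{-2}$ together with $\delta<(KL)^{-2}$ or similar — and suppose $\phi:S\to\{0,1\}$ has $\diff_\om(\phi)\le\delta$.

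The key structural step is to identify the candidate filter. Set $F=\phi^{-1}(1)\cap W_K=\{x\in S\st \om(x)\le K,\ \phi(x)=1\}$, or perhaps more robustly the filter $\widetilde F$ generated by this set. I would first argue that on $W_K$ the function $\phi$ is \emph{honestly} multiplicative, not just approximately so: if $e,f\in W_K$ then $\om(e)\om(f)\le K^2$, so $\abs{\phi(ef)-\phi(e)\phi(f)}\le \delta K^2<1$, and since all three quantities are in $\{0,1\}$ this forces $\phi(ef)=\phi(e)\phi(f)$ exactly. Hence $\phi$ restricted to the sub-semilattice $\gen{W_K}$ — wait, one must be careful, products of elements of $W_K$ need not lie in $W_K$, but they do lie in $\gen{W_K}$, on which $\om\le L$. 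So I would instead run the same argument on $\gen{W_K}$: for $e,f\in\gen{W_K}$ we have $\om(e)\om(f)\le L^2$, so if $\delta L^2<1$ then $\phi$ is genuinely multiplicative on $\gen{W_K}$. Then $F_0:=\phi^{-1}(1)\cap\gen{W_K}$ is either empty or a filter \emph{within the semilattice} $\gen{W_K}$, and by Lemma~\ref{l:which-sets} (the last clause, about filters generated by sets on which a multiplicative function equals $1$) one upgrades this to a genuine filter $F$ in $S$, namely the filter generated by $F_0$.

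It remains to estimate $\sup_{e\in S}\om(e)^{-1}\abs{\chi_F(e)-\phi(e)}$. For $e$ with $\om(e)>K$ the term $\om(e)^{-1}\abs{\chi_F(e)-\phi(e)}\le K^{-1}<\veps$ automatically, regardless of the values. So the real content is to show $\chi_F=\phi$ on $W_K$ — equivalently on $\gen{W_K}$, or at least on $W_K$. One inclusion is easy: if $e\in W_K$ and $\phi(e)=1$ then $e\in F_0\subseteq F$ so $\chi_F(e)=1$. For the converse I must rule out $e\in W_K$ with $\phi(e)=0$ but $e\in F$: if $e\in F$ then $e\succeq y$ for some $y\in\gen{F_0}$, and since $\phi$ is multiplicative on $\gen{W_K}$ with $\phi=1$ on $F_0$, also $\phi(y)=1$; then $ey=y$, so $\abs{\phi(e)\phi(y)-\phi(ey)}=\abs{\phi(e)-1}$, and this is $\le\delta\,\om(e)\om(y)\le\delta KL<1$ provided $\delta<(KL)^{-1}$, forcing $\phi(e)=1$, a contradiction. (One should double-check here that $y\in\gen{W_K}$ so that $\om(y)\le L$; this is where flightiness is used again.) Choosing $\delta<\min\{L^{-2},(KL)^{-1}\}$ makes all these steps go through, and this $\delta$ depends only on $K$ and $L$, hence only on $\veps$. \textbf{The main obstacle} I anticipate is the bookkeeping around $\gen{W_K}$ versus $W_K$: one needs the genuine-multiplicativity argument to take place on a \emph{sub-semilattice} (so it is closed under products), and flightiness is precisely what guarantees that this larger sub-semilattice still has a uniform weight bound $L$, which is what keeps $\delta$ from having to shrink to $0$. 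Getting the quantifiers and the order of ``pass to $\gen{W_K}$, then apply Lemma~\ref{l:which-sets}'' right is the delicate part; the inequalities themselves are routine.
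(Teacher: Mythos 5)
Your proposal is correct and takes essentially the same approach as the paper: partition $S$ by weight, use flightiness to bound $\om$ on $\gen{W_K}$, take the filter generated by the small-weight elements where $\phi=1$, and show agreement on $W_K$ with automatic smallness of the weighted error off $W_K$. Your observation that $\phi$ is \emph{exactly} multiplicative on the sub-semilattice $\gen{W_K}$ is a mild re-packaging of the paper's inductive claim (that $\psi=1$ on $\gen[k]{E}$), and your appeal to Lemma~\ref{l:which-sets} for the ``upgrade'' step is slightly off target since that lemma's final clause concerns a function multiplicative on all of $S$ — but this is immaterial, as the filter generated by $F_0$ is simply well-defined, which is all Corollary~\ref{c:reduced-problem} requires.
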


\begin{proof}
We will use Corollary~\ref{c:reduced-problem}.
Fix $\veps>0$, and let
\[
\Sfix \defeq W_{2/\veps} \equiv \{x \in S \st \om(x) \leq 2/\veps \}, \]
which we think of this as the set where $\om$ is relatively small.

By our assumption on $(S,\om)$, there exists some constant $C(\veps) > 0$ such that
$\om(y) \leq C(\veps)$ for all $y\in \gen{\Sfix}$.
Choose $\delta>0 $ such that $2\delta C(\veps)/ \veps < 1$.

Now suppose $\psi: S\to \{0,1\}$ satisfies $\diff_\om(\psi)\leq \delta$.
Let 
\[ E \defeq \{ y\in \Sfix \st \psi(y)=1 \} \]
and let
\[ F \defeq \bigcup_{y\in \gen{E}} \{x \in S \st x\succeq y\}\,. \]
Recall: if $E$ is empty then so is $F$; otherwise, $F$ is the filter generated by $E$ in $S$.

To motivate the next step, suppose $\phi:S\to\Cplx$ is multiplicative and is close in norm to~$\psi$. Then (provided $\delta$ is sufficiently small) we must have $\phi=1$ on $E$; and so, as remarked in Lemma~\ref{l:which-sets}, $\phi$ must be $1$ on~$F$. Therefore, $\psi$ must also be $1$ on $F\cap\Sfix$, i.e.~we must have $E=F\cap\Sfix$.

So, our next step is to verify that $E=F\cap\Sfix$. It suffices to show that $F\cap\Sfix\subseteq E$, and the  proof of this goes via the following claim:

\para{Claim.} Let $k\in\Nat$. Then
$\psi(y)=1$ for all $y\in \gen[k]{E}$.

\para{Proof of claim.}
Induction on $k$. If $k=1$ there is nothing to prove. Suppose that the claim holds for $k=m-1$ where $2\leq m$, and let $x_1,\dots, x_m\in E$. Put $y'=x_1\dotsb x_{m-1}$; then by the inductive hypothesis, the definition of the constant $C(\veps)$, and our choice of $\delta$, we have
\[ \begin{aligned}
\abs{ \psi(y' x_m) - 1 }
 & = \abs{\psi(y'x_m) - \psi(y')\psi(x_m)} \\
 & \leq \delta \om(y')\om(x_m)  \\
 & \leq \delta C(\veps)\om(x_m)  \\
 & \leq \delta C(\veps) 2/\veps  < 1.
\end{aligned} \]
Since $\psi$ is $0$-$1$-valued, this forces $\psi(y'x_m)=1$, so that the claim holds for $k=m$.
\medskip

Now let $z\in F\cap\Sfix$. Since $z\in F$, there exists $y\in \gen{E}$ such that $zy=y$.  By our claim, $\psi(y)=1$. Since $z\in\Sfix$ and $E\subseteq \Sfix$, this implies
\[ \begin{aligned}
\abs{\psi(z)-1}
 & = \abs{\psi(y)\psi(z)-\psi(yz)} \\
 & \leq \delta\om(y)\om(z) \\
 & \leq \delta C(\veps) \om(z) \\
 & \leq \delta C(\veps) 2/\veps  < 1.
\end{aligned} \]
As before, this forces $\psi(z)=1$, so that $z\in E$.

Thus $F\cap\Sfix = E$, as required. We therefore have $S= E \cup (\Sfix\setminus F) \cup (S\setminus\Sfix)$. Now observe that:
\begin{itemize}
\item when $x\in E$, we have $\abs{\chi_F(x)-\psi(x)} = 0$;
\item when $x\in \Sfix \setminus F\subseteq \Sfix\setminus E$, we have $\chi_F(x)=0=\psi(x)$, so that $\abs{\chi_F(x) - \psi(x)} = 0$;
\item when $x\in S\setminus \Sfix$, we have $\om(x) \geq 2/\veps$, so that $\abs{\chi_F(x)-\psi(x)}\leq 2 \leq \veps\om(x)$.
\end{itemize}
Putting these cases together, we see that $\chi_F$ is multiplicative and satisfies\hfill\newline $\sup_{x\in S} \om(x)^{-1}\abs{\chi_F(x)-\psi(x)} \leq \veps$. In view of Corollary~\ref{c:reduced-problem}, this shows that $\lom(S)$ is AMNM.
\end{proof}

As a special case of Theorem~\ref{t:is-AMNM}, we get another proof that $\ell^1(S)$ is AMNM for every semilattice $S$ (see Example~\ref{eg:cases}\ref{li:one}). More interestingly, we can deduce that if $S$ is a semilattice with either finite width or finite height, then $\lom(S)$ is AMNM for every submultiplicative weight~$\om$ (see Example~\ref{eg:cases}\ref{li:two}). In particular, for any weight function $\om:\Nat\to [1,\infty)$, the algebra $B_\om$ from Example~\ref{eg:fein-alg} is AMNM, as it is isomorphic to $\lom(\Nmin)$.

\end{section}

\begin{section}{More general range algebras}
For which Banach algebras $B$ and weighted semilattices $(S,\om)$ is $(\lom(S),B))$ an AMNM pair? Since we do not have a complete answer in the case $B=\Cplx$, we can expect only partial results in the more general case. A natural place to start is with those semilattices covered by Theorem~\ref{t:is-AMNM}, in particular with $\Nmin$. Recall that by this theorem, $\lom(\Nmin)$ is AMNM for any weight $\om$.
In contrast, we will now show that whenever $\om$ is a non-trivial (=unbounded) weight on $\Nmin$, the pair $(\lom(\Nmin),\sT_2)$ is not AMNM, and that the pair $(\lom(\Nmin),\Mat{2})$ fails to be AMNM for most choices of weight~$\omega$. Here, $\Mat{2}$ is the algebra of $2\times 2$ complex matrices, and $\sT_2$ is the subalgebra
\[ \left\{ \twomat{a}{b}{0}{a} \st a,b\in\Cplx\right\}\iso \Cplx[x]/(x^2). \]

\begin{rem}
These are not arbitrary test cases: $\Mat{2}$ is the smallest semisimple algebra that is noncommutative, while $\sT_2$ is the smallest commutative, unital algebra that is not semisimple. (In the context of commutative algebra and deformation theory, it is also known as the algebra of \dt{dual numbers over $\Cplx$}, since it formalizes the notion of an infinitesimal element vanishing to 2nd order.)
Additional motivation comes from results of R. A. J. Howey, who showed that $C^k[0,1]$ is AMNM ($k\geq 1)$ (\cite{Howey_JLMS}), while observing -- in slightly different notation -- that the pair $(C^k[0,1], \sT_2)$ is not AMNM (\cite[Corollary 4.2.5]{Howey_PhD}; see also Remark~\ref{r:approx-der} below).
\end{rem}

It does not matter which norm we put on $\sT_2$ or $\Mat{2}$. To be definite, we give $\Mat{2}$ its natural norm (the $\Cst$-algebra norm), but equip $\sT_2$ with the norm
\[ \Norm{ \twomat{a}{b}{0}{a} }_{\sT_2} = \abs{a}+\abs{b} . \]
We now consider AMNM pair problems for $\lom(\Nmin)$ when the range is $\sT_2$ or $\Mat{2}$.
 If the weight function is bounded then $\lom(\Nmin)$ is isomorphic as a Banach algebra to $\ell^1(\Nmin)$, and so we may restrict attention to the cases where the weight is either trivial (i.e.~identically $1$) or unbounded.

\begin{thm}\label{t:wt-Nmin-T2}
Let $\om$ be an unbounded weight function on $\Nmin$.
Then $(\lom(\Nmin), \sT_2)$ is not an AMNM pair.
\end{thm}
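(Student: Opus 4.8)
The plan is to exhibit, for each $\delta>0$, an $\om$-bounded map $\psi_\delta\colon\Nmin\to\sT_2$ whose multiplicative defect is at most $\delta$ but which is uniformly bounded away from $\Mult(\lom(\Nmin),\sT_2)$ by a constant independent of $\delta$. The natural candidate is a ``truncated derivation'': pick $N=N(\delta)$ large (to be chosen using unboundedness of $\om$), and set
\[ \psi_\delta(n) = \twomat{1}{c_\delta\,h(n)}{0}{1}, \]
where $h\colon\Nmin\to\Cplx$ is a fixed point derivation at the character $\chi=\id$ (i.e.\ $h(\min(m,n))=h(m)+h(n)$ fails in general, but a \emph{bounded} quasi-derivation adapted to the order structure can be built), and $c_\delta$ is a small scaling factor. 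The key structural fact is that $\Nmin$ is totally ordered, so $\min(m,n)\in\{m,n\}$ always; thus the defect of such a $\psi$ at the pair $(m,n)$ with $m\le n$ is governed by $|\psi(m)\psi(n)-\psi(m)|$, which (since $\psi(n)-\id$ is strictly upper triangular and squares to zero) reduces to controlling $c_\delta\,|h(n)|\,\om(m)^{-1}\om(n)^{-1}$ — a quantity we can force to be $\le\delta$ by choosing $h$ to grow slowly relative to $\om$ and scaling by $c_\delta$.

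First I would record the elementary computation in $\sT_2$: if $u=\twomat{1}{s}{0}{1}$ and $v=\twomat{1}{t}{0}{1}$ then $uv=\twomat{1}{s+t}{0}{1}$, so these elements already multiply like ``$1$ plus a derivation''. The obstruction to multiplicativity for $\psi$ as above is therefore entirely the failure of $n\mapsto c_\delta h(n)$ to be additive under $\min$. Since $\min(m,n)$ equals the \emph{smaller} of the two, the right normalisation is to take $h$ constant on large initial segments and to let it jump only at a sparse increasing sequence of indices where $\om$ is large; concretely, using unboundedness, pick $n_1<n_2<\cdots$ with $\om(n_k)\ge 2^k$, put $h=\sum_k 2^{-k}\chi_{[n_k,\infty)}$ (a bounded function, $\|h\|_\infty\le 1$), and scale so that $\diff_\om(\psi_\delta)$ — which by the computation above is a supremum of terms of the form $c_\delta|h(n)-h(m)|/(\om(m)\om(n))$ over $m\le n$, hence $\le c_\delta\cdot(\text{something finite})$ — is $\le\delta$. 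Then I would verify the lower bound: any genuine $T\in\Mult(\lom(\Nmin),\sT_2)$ corresponds to an $\om$-bounded semigroup homomorphism $\Nmin\to\sT_2$; since $\sT_2$ has a unique nonzero idempotent (namely $\id$) and $\Nmin$ consists of idempotents, the only multiplicative $\om$-bounded maps are (up to the trivial map) constant equal to $\id$, or — because $\Nmin$ is totally ordered and the support of a nonzero semicharacter is a filter — of the form $\chi_{[m,\infty)}\cdot\id$. Either way, $T$ takes only the values $0$ and $\id$, so $\|\psi_\delta-T\|\ge\sup_n\om(n)^{-1}\|\psi_\delta(n)-T(n)\|$ stays bounded below by a fixed positive constant (coming from the first index where $h\ne 0$ and $\om$ is still moderate, combined with the mismatch in the $(1,2)$-entry at those indices where $T=\id$).

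The main obstacle I anticipate is \emph{getting the two estimates to point in opposite directions simultaneously}: making $\diff_\om(\psi_\delta)$ small forces the perturbing derivation to be tiny wherever $\om$ is small, yet the distance-from-$\Mult$ lower bound must be witnessed at indices where $\om$ is \emph{not} too large (otherwise $\om(n)^{-1}$ kills the contribution). Resolving this is exactly where the \emph{unboundedness} of $\om$ is used: it lets us place the ``jumps'' of $h$ at indices $n_k$ where $\om(n_k)$ is huge (so the defect is controlled) while arranging that on the complementary moderate-$\om$ region $\psi_\delta$ still differs from every $0$-$1$-valued multiplicative candidate by a definite amount. A clean way to package this is to first reduce, exactly as Johnson does and as is implicit in the earlier sections, to comparing $\psi_\delta$ against the finitely many ``filter characters'' $\chi_{[m,\infty)}$; on $\Nmin$ these are linearly ordered by $m$, so the infimum over $T$ of $\|\psi_\delta-T\|$ is attained, and one checks it is $\ge\tfrac12 c_\delta|h(n_1)|\,\om(n_1)^{-1}$ or similar — but since $c_\delta$ itself shrinks with $\delta$, the correct move is instead to \emph{fix} the perturbation size and shrink only the ``location'' $N(\delta)$, i.e.\ use that $\om$ unbounded gives indices of arbitrarily large weight, so the \emph{same} bounded derivation $h$ (not scaled down) already has $\diff_\om(\psi)$ as small as we like once its jumps are pushed far enough out, while $\dist(\psi,\Mult)$ is literally constant. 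That reformulation — perturb by a fixed $h$, vary where its support begins — is, I expect, the crux, and it is precisely the phenomenon that fails for the trivial weight and makes the contrast with Theorem~\ref{t:is-AMNM} sharp.
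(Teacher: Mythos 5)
There is a genuine gap, and it sits precisely where you anticipated trouble (``getting the two estimates to point in opposite directions simultaneously''): with your choice of diagonal $\equiv I$, the two estimates literally cannot point in opposite directions. If $\psi(n)=\twomat{1}{g(n)}{0}{1}$, then for $m\le n$ one has $\psi(m)\psi(n)-\psi(m)=\twomat{0}{g(n)}{0}{0}$, so the defect at the pair $(1,n)$ is exactly $\abs{g(n)}/(\om(1)\om(n))$. On the other hand, $T\equiv I$ is a bona fide element of $\Mult(\lom(\Nmin),\sT_2)$, and $\norm{\psi(n)-T(n)}_{\sT_2}=\abs{g(n)}$, so $\dist(\psi,\Mult)\le\sup_n\abs{g(n)}/\om(n)=\om(1)\cdot\sup_n\abs{g(n)}/(\om(1)\om(n))\le\om(1)\,\diff_\om(\psi)$. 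Thus for \emph{any} choice of the $(1,2)$-entry $g$ --- scaled down by $c_\delta$, or fixed and pushed out in support, or anything else --- a small multiplicative defect \emph{forces} the distance to $\Mult$ to be small. (Also, your later rewriting of the defect as $c_\delta\abs{h(n)-h(m)}/(\om(m)\om(n))$ contradicts your own earlier correct computation; under the $\min$ operation there is no cancellation of the form $h(n)-h(m)$.) A secondary issue is that you use ``$\om$ unbounded'' as though it meant $\om(n)\to\infty$: unboundedness only yields $\liminf\om(n)^{-1}=0$, so one cannot control $\om$ on an entire tail $[N,\infty)$.

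The paper's construction circumvents exactly this obstruction by \emph{not} taking the diagonal to be $I$. Setting $\theta_m=\twomat{\chi_m}{\om(m)\delta_m}{0}{\chi_m}$, the diagonal $\chi_m$ vanishes on $\{1,\dots,m-1\}$, and this zero kills the dangerous cross term: in $\theta_m(j)\theta_m(m)$ the off-diagonal entry acquires a factor $\chi_m(j)=0$ for $j<m$, so the only nonzero defect occurs at $(m,m)$, giving $\diff_\om(\theta_m)=\om(m)/\om(m)^2=\om(m)^{-1}$. Meanwhile the $(1,2)$-entry at the single index $m$ is deliberately taken to be $\om(m)$ (large, not bounded), so that after normalising by $\om(m)$ the mismatch with any multiplicative $\Phi$ (which must have $\Phi(m)_{12}=0$, as you correctly observed) is exactly $1$. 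The two ingredients you are missing are thus: (i) the diagonal must switch on at $m$, not be identically $I$, in order to suppress the cross-term defects; and (ii) the perturbation must scale \emph{up} like $\om(m)$, not stay bounded, so that the normalised distance stays $\ge 1$ while the defect, being normalised by $\om(m)^2$, tends to $0$.
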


\begin{proof}
Throughout this proof, let $A$ denote the algebra $\lom(\Nmin)$.
For each $m\in\Nat$ let $\chi_m$ be the character
  \[ \chi_m(k) = \left\{\begin{aligned}
  1 & \quad\text{if $k\geq m$,} \\
  0 & \quad\text{if $k < m$,}
  \end{aligned} \right. \]
  and let $\delta_m$ be the Dirac point mass at~$m$.
Set
 \[ \theta_m =\twomat{\chi_m}{\om(m)\delta_m}{0}{\chi_m} : \Nat \to \sT_2, \]
noting that the range of $\theta_m$ consists of commuting elements.

For $1\leq j\leq k$, we have
 \[ \begin{aligned}
\theta_m(j)\theta_m(k) - \theta_m(j)
   & = \twomat{0}{\chi_m(j)\om(m)\delta_m(k)+\om(m)\delta_m(j)(\chi_m(k)-1)}{0}{0} \\
   & = \twomat{0}{\chi_m(j)\om(m)\delta_m(k)}{0}{0} .
   \end{aligned}\]
Hence by symmetry, for general $j,k\in\Nmin$ we have
\[ \begin{aligned}
  \theta_m(j)\theta_m(k) - \theta_m(j\wedge k)
 & = \twomat{0}{\chi_m(j\wedge k)\om(m)\delta_m(j\vee k)}{0}{0} \\
\end{aligned}
\]
which vanishes unless $j=k=m$, in which case
 \[  \theta_m(m)\theta_m(m) - \theta_m(m) = \twomat{0}{\om(m)}{0}{0}.\]
Thus
\begin{equation}\label{eq:chocolate}
 \diff(\theta_m) = \sup_{j, k\in\Nat} \frac{\norm{\theta(j)\theta(k)-\theta(j\wedge k)}}{\om(j)\om(k)}
 = \frac{\om(m)}{\om(m)\om(m)} = \om(m)^{-1}.
\end{equation}

Suppose $\Phi: \Nmin \to \sT_2$ is multiplicative. Since $\sT_2$ has no non-trivial idempotents, this forces $\phi(n)_{12}=0$ for all $n$.
Hence
\[ \norm{\Phi(m)-\theta_m(m)} \geq \abs{\Phi(m)_{12}-\theta_m(m)_{12}}
 \geq \om(m), \]
so that
\[ \dist_{\Lin(A,\sT_2)} (\theta_m,\Mult(A,\sT_2)) \geq \sup_{x\in\Nat} \frac{\norm{\Phi(x)-\theta_m(x)}}{\om(x)} \geq 1 \quad\text{for all $m$.} \]
Since $\liminf_n \om(n)^{-1}=0$, it follows that $(A,\sT_2)$ is not an AMNM pair.
\end{proof}

\begin{rem}\label{r:approx-der}
A (bounded) multiplicative function $\Psi$ from a Banach algebra $A$ to $\sT_2$ is easily seen to be of the form
\[ \Psi(a) = \twomat{\varphi(a)}{D(a)}{0}{\varphi(a)},\]
where $\varphi\in\Mult(A,\Cplx)$ and $D:A \to \Cplx$ is a (bounded) derivation with respect to the bimodule action of $A$ on $\Cplx$ via $\phi$. In the case where $A$ has a dense subspace consisting of commuting idempotents, the only such bounded derivation is $0$; and the failure of $(\lom(\Nmin),\sT_2)$ to be AMNM can be interpreted as saying that there are ``approximate point derivations'' of large norm on $\lom(\Nmin)$. This perspective (motivated by cohomological questions about the ``Fein\-stein algebras'') was the original approach used to construct the counter-example seen in proving Theorem~\ref{t:wt-Nmin-T2}. With hindsight, a similar idea can be seen behind Howey's result that $(C^k[0,1],\sT_2)$ is not an AMNM pair.
\end{rem}

\medskip
In general, approximately multiplicative maps into an algebra $B$ can be far from multiplicative maps into $B$, yet be close to multiplicative maps into $C$ for some containing algebra $C\supset B$. This is illustrated by the following example, provided by the referee.

\begin{eg}[Referee's example]
Let $\chi_m$ and $\theta_m$ be as in the proof of Theorem~\ref{t:wt-Nmin-T2}. We saw in that proof that $\diff(\theta_m)=\om(m)^{-1}$ and
\[ \dist_{\Lin(A,\sT_2)} (\theta_m,\Mult(A,\sT_2)) \geq  1. \]
Now define $\phi_m: \Nmin\to \Mat{2}$ by
\[ \phi_m = \twomat{\chi_m}{\om(m)\delta_m}{0}{\chi_{m+1}} \]
A case-by-case analysis confirms that $\phi_m$ is multiplicative, the key point being that $\phi_m(k)=0$ for $k<m$ and $\phi_m(k)=I$ for $k>m$. Moreover,
\[ \theta_m - \chi_m = \twomat{0}{0}{0}{\delta_m} \]
so that $\norm{\theta_m-\chi_m} = \om(m)^{-1} = \diff(\theta_m)$.
\end{eg}

One reason why things are trickier when the range algebra is $\Mat{2}$ is that the lattice of idempotents is bigger. Nevertheless, we will be able to use the following modest observation:
{\it if $V$ is a finite-dimensional vector space, and $P,Q$ are commuting idempotents in $\Lin(V)$ with the same rank, then $P=Q$.}
While this observation is easily proved by geometric considerations, we outline an alternative approach: since $P$ and $Q$ are commuting idempotents, $P-Q$ is idempotent; and since the rank of an idempotent equals its trace, it follows that
\[ \operatorname{rank}(P-Q) = \tr(P-Q)=\tr(P)-\tr(Q) = \operatorname{rank}(P)-\operatorname{rank}(Q) = 0 \]
so that $P-Q=0$, as required.

\begin{rem}\label{r:harbinger}
We mention this approach since an ``approximate version'' will be used in Section~\ref{s:AMNM-M2} when dealing with $2\times 2$ matrices that are ``approximately idempotent'': while the rank of a matrix behaves badly under small-norm perturbations, its trace behaves much better.
\end{rem}

\begin{lem}\label{l:two-element-obstruction}\
\begin{YCnum}
\item Let $a,b \geq 1$ and let $A=\twomat{1}{-a}{0}{0}$, $B=\twomat{1}{b}{0}{0}$.
If $P, Q$ are commuting idempotents in $\Mat{2}$, then either $\norm{P-A}\geq a/2$ or $\norm{Q-B}\geq b/2$.
\item Let $d \geq 1$ and let $C=\twomat{1}{d}{0}{0}$. If $P, Q$ are commuting idempotents in $\Mat{2}$, then either $\norm{P-2C}\geq d/2$ or $\norm{Q-C}\geq d/4$.
\end{YCnum}
\end{lem}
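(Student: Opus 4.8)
The plan is to reduce both parts to the single assertion that, under the stated hypotheses, $P$ and $Q$ are forced to be idempotents of rank one; one then invokes the observation recorded just before this lemma (commuting idempotents of equal rank coincide) to get $P=Q$, after which the required estimates fall out of the triangle inequality together with the elementary computations $\norm{A-B}=a+b$ and $\norm{2C-C}=\norm{C}=\sqrt{1+d^2}$.

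Concretely, I argue by contradiction. For part~(i), suppose $\norm{P-A}<a/2$ and $\norm{Q-B}<b/2$. Since every entry of a $2\times2$ matrix is bounded in modulus by its operator norm, the $(1,2)$-entries satisfy $\abs{P_{12}+a}<a/2$ and $\abs{Q_{12}-b}<b/2$, so that $P_{12}\in(-3a/2,-a/2)$ and $Q_{12}\in(b/2,3b/2)$; as $a,b\geq1$, these are non-zero. Now $0$ and $\id$ both have vanishing $(1,2)$-entry, so $P,Q\notin\{0,\id\}$; and since every idempotent of $\Mat{2}$ is $0$, $\id$, or of rank one, both $P$ and $Q$ have rank one. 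Being commuting idempotents of the same rank, $P=Q$, whence
\[ a+b=\norm{A-B}\leq\norm{A-P}+\norm{P-B}=\norm{P-A}+\norm{Q-B}<\frac a2+\frac b2=\frac{a+b}2, \]
which is absurd. Part~(ii) runs identically: from $\norm{P-2C}<d/2$ and $\norm{Q-C}<d/4$ one obtains $\abs{P_{12}-2d}<d/2$ and $\abs{Q_{12}-d}<d/4$, so $P_{12}\in(3d/2,5d/2)$ and $Q_{12}\in(3d/4,5d/4)$ are non-zero (using $d\geq1$); hence $P$ and $Q$ have rank one and so $P=Q$, giving
\[ \sqrt{1+d^2}=\norm{C}=\norm{2C-C}\leq\norm{2C-P}+\norm{P-C}<\frac d2+\frac d4=\frac{3d}4, \]
which contradicts $\sqrt{1+d^2}>d$.

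The step I expect to be the real point is the reduction to rank one, that is, excluding the degenerate idempotents $0$ and $\id$ so that the word ``rank'' is meaningful: it is the \emph{large off-diagonal entries} of $A,B$ (respectively $2C,C$) that force $P_{12},Q_{12}\neq0$ and do this work, whereas a trace estimate alone would not help, since $\abs{\tr(P)-\tr(A)}\leq 2\norm{P-A}<a$ need not be below $1$ and so cannot pin $\tr(P)$ to an integer. Everything past that point is the triangle inequality and the two one-line operator-norm computations above. If one prefers not to cite the observation, one can instead use the dichotomy that two commuting rank-one idempotents of $\Mat{2}$ are either equal or complementary: the ``equal'' case is exactly the computation above, and in the ``complementary'' case $Q=\id-P$ one rewrites $\norm{Q-B}=\norm{(\id-B)-P}$ and re-runs the analysis with the rank-one idempotent $\id-B$ in place of $B$ (and similarly for~(ii) with $\id-C$).
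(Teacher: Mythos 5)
Your proof is correct and follows essentially the same route as the paper's: argue by contradiction, exclude the degenerate idempotents $0$ and $\id$ to force $P$ and $Q$ to have rank one, invoke the observation that commuting idempotents of equal rank coincide to get $P=Q$, then derive the contradiction from the triangle inequality. The only cosmetic difference is in the non-degeneracy step, where you read off the $(1,2)$-entries of $P$ and $Q$ directly, while the paper instead uses $\norm{A}\geq a$ and $\norm{A-I}\geq a$ (and the analogous bounds for $B$ and $C$) to push $P$ and $Q$ away from $0$ and $\id$; both are equally valid one-liners.
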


\begin{proof}
We prove both (i) and (ii) by contradiction. Suppose $P$ and $Q$ are commuting idempotents that satisfy $\norm{P-A} < a/2$ and $\norm{Q-B} < b/2$. 
Since $\norm{A}\geq a$ and $\norm{A-I}\geq a$, $P\notin\{0,I\}$; similarly, $Q\notin\{0,I\}$. Hence $P$ and $Q$ both have rank $1$. This forces $P=Q$ (see the observation made before Remark~\ref{r:harbinger}), and so
\[ 0< a+b = \norm{A-B} \leq \norm{A-P}+\norm{Q-B} < (a+b)/2 \]
which is a contradiction. Thus (i) is proved.

Similarly, suppose $P$ and $Q$ are commuting idempotents that satisfy $\norm{P-2C} < d/2$ and $\norm{Q-C} < d/4$. Since $\norm{C}\geq d$ and $\norm{C-I}\geq d$, $Q\notin\{0,I\}$; similarly, $P\notin\{0,I\}$. Thus $P$ and $Q$ both have rank $1$, which as before forces $P=Q$. But then
\[ 0 < d = \norm{C} \leq \norm{2C-P}+\norm{Q-C} < \frac{d}{2} + \frac{d}{4} \]
which is a contradiction. Thus~(ii) is proved.
\end{proof}

We can now present our main result for $\lom(\Nmin)$ and $\Mat{2}$, which shows that for many unbounded weights we do not get an AMNM pair. The author is grateful to the referee for spotting errors in an earlier version, which mistakenly claimed that the AMNM property failed for \emph{all} unbounded weights; see also the discussion following Remark~\ref{r:BAC-M2-non-amnm}.

\begin{thm}\label{t:wt-Nmin-M2}
Let $\om$ be a weight function on $\Nmin$ that satisfies
\[ \sup_n \min(\om(n),\om(n+1)) = +\infty .\]
Then $(\lom(\Nmin), \Mat{2})$ is not an AMNM pair.
\end{thm}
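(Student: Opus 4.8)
The plan is to mimic the strategy of Theorem~\ref{t:wt-Nmin-T2}, but using $\Mat{2}$ and the obstruction encoded in Lemma~\ref{l:two-element-obstruction}. By hypothesis there is a sequence of indices $n$ along which $\min(\om(n),\om(n+1))\to\infty$; fix one such $n$ (with $a=\om(n)\geq 1$ and $b=\om(n+1)\geq 1$ both large, or in the degenerate case $n$ and $n+1$ colliding as in part~(ii) of the lemma with $d=\om(n)$). First I would define a map $\Theta_n:\Nmin\to\Mat{2}$ which agrees with a genuine character off a ``window'' near $n$, and which inside the window carries the off-diagonal perturbations $-\om(n)\,\delta_n$ and $\om(n+1)\,\delta_{n+1}$ (scaled as in the lemma) in the upper-right corner. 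Concretely, something like
\[
\Theta_n \defeq \twomat{\chi_n}{-\om(n)\delta_n+\om(n+1)\delta_{n+1}}{0}{\chi_{n+2}},
\]
so that $\Theta_n(k)=0$ for $k<n$ and $\Theta_n(k)=I$ for $k>n+1$, and only the two values $\Theta_n(n)$ and $\Theta_n(n+1)$ are non-standard.

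The second step is the case-by-case computation of $\diff(\Theta_n)$. For $j,k$ both $<n$ or both $>n+1$ the product is exactly right; the subtle cases are when one or both of $j,k$ lie in $\{n,n+1\}$. Because $j\wedge k$ and $j\vee k$ interact with the indicator functions $\chi_n,\chi_{n+2}$ in a controlled way (as in the chocolate computation~\eqref{eq:chocolate}), the numerator $\norm{\Theta_n(j)\Theta_n(k)-\Theta_n(j\wedge k)}$ is at most a bounded multiple of $\max(\om(n),\om(n+1))$, while the denominator $\om(j)\om(k)$ is at least $\min(\om(n),\om(n+1))^2$ whenever a genuinely defective pair occurs. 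Hence $\diff(\Theta_n)\leq c/\min(\om(n),\om(n+1))\to 0$ for an absolute constant $c$. I would also need to bound $\norm{\Theta_n}$, but here a subtlety appears: $\norm{\Theta_n}=\sup_k \om(k)^{-1}\norm{\Theta_n(k)}$ involves $\om(n)^{-1}\norm{\Theta_n(n)}$ and $\om(n+1)^{-1}\norm{\Theta_n(n+1)}$, and since the off-diagonal entry has modulus $\om(n)$ resp.\ $\om(n+1)$, these are $O(1)$ — so $\norm{\Theta_n}$ stays uniformly bounded, which is exactly what we want for the AMNM-pair definition where a norm bound $K$ is allowed.

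The third step is the lower bound on the distance to $\Mult(A,\Mat{2})$. Suppose $\Phi:\Nmin\to\Mat{2}$ is multiplicative; then $\Phi(n)$ and $\Phi(n+1)$ are commuting idempotents (they commute since $n\wedge(n+1)=n$ and $\Phi$ respects the semilattice product). If $\norm{\Phi-\Theta_n}$ (in the $\om^{-1}$-weighted sup norm) were small, then $\om(n)^{-1}\norm{\Phi(n)-\Theta_n(n)}$ and $\om(n+1)^{-1}\norm{\Phi(n+1)-\Theta_n(n+1)}$ would both be small, i.e.\ $\Phi(n)$ is within $o(\om(n))$ of a matrix of the shape $\twomat{1}{-\om(n)}{0}{0}$ (up to the harmless $\chi_{n+2}$ adjustment in the lower-right corner, which I would absorb) and $\Phi(n+1)$ within $o(\om(n+1))$ of $\twomat{1}{\om(n+1)}{0}{0}$. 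But Lemma~\ref{l:two-element-obstruction}(i), applied with $a=\om(n)$, $b=\om(n+1)$, says that for \emph{any} pair of commuting idempotents $P,Q$ we have $\max(\norm{P-A},\norm{Q-B})\geq \tfrac12\min(a,b)$, so at least one of the two weighted distances is at least $\tfrac12$. Hence $\dist(\Theta_n,\Mult(A,\Mat{2}))\geq \tfrac12$ for infinitely many $n$, while $\diff(\Theta_n)\to 0$ and $\norm{\Theta_n}$ is bounded — contradicting the AMNM-pair property. The degenerate situation where only one of $\om(n),\om(n+1)$ is forced large is handled by part~(ii) of the lemma with the map $\twomat{\chi_n}{\om(n)\delta_n}{0}{\chi_{n+1}}$-type construction; I would need to check the arithmetic of which scalar multiples of $C$ appear.

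The main obstacle I anticipate is bookkeeping in the third step: matching the off-diagonal entries of $\Phi(n),\Phi(n+1)$ to the hypotheses of Lemma~\ref{l:two-element-obstruction} requires being careful that the lower-right-corner discrepancy coming from $\chi_{n+2}$ versus $\chi_n$ (i.e.\ the fact that $\Theta_n(n)$ and $\Theta_n(n+1)$ are not literally of the form $\twomat{1}{*}{0}{0}$, but have a possibly-different $(2,2)$-entry) does not interfere with the rank argument behind the lemma — equivalently, that a small perturbation in that corner still keeps $\Phi(n),\Phi(n+1)$ away from $\{0,I\}$ and of rank one. This is where the trace-stability philosophy of Remark~\ref{r:harbinger} does the work, and it should go through, but it is the place where an earlier version of this paper (as the author notes before the statement) contained an error, so it warrants care.
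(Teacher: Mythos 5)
Your construction is essentially identical to the paper's: unwinding your closed-form expression $\Theta_n = \twomat{\chi_n}{-\om(n)\delta_n+\om(n+1)\delta_{n+1}}{0}{\chi_{n+2}}$ gives exactly the map $\theta$ that the paper uses ($\theta(j)=0$ for $j<n$, $\theta(n)=\twomat{1}{-\om(n)}{0}{0}$, $\theta(n+1)=\twomat{1}{\om(n+1)}{0}{0}$, $\theta(k)=I$ for $k\geq n+2$), and the final lower bound via Lemma~\ref{l:two-element-obstruction}(i) is the paper's third step verbatim. Two points to tighten up.

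First, your defect estimate as written does not actually yield the conclusion: bounding the numerator by a multiple of $\max(\om(n),\om(n+1))$ and the denominator below by $\min(\om(n),\om(n+1))^2$ gives $O(\max/\min^2)$, which need \emph{not} tend to $0$ (the weight can satisfy, say, $\om(n)=n$ and $\om(n+1)=e^{e^n}$, both tending to $+\infty$ along a subsequence). You should compute the only defective pair exactly, as the paper does: $\theta(n)\theta(n+1)-\theta(n)=\twomat{0}{\om(n)+\om(n+1)}{0}{0}$, so $\diff_\om(\theta)=\dfrac{\om(n)+\om(n+1)}{\om(n)\om(n+1)}=\dfrac{1}{\om(n)}+\dfrac{1}{\om(n+1)}\leq\dfrac{2}{\min(\om(n),\om(n+1))}$, which does go to $0$.

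Second, the ``main obstacle'' you anticipate is a phantom. Since $\chi_{n+2}(n)=\chi_{n+2}(n+1)=0$, the $(2,2)$-entries of $\Theta_n(n)$ and $\Theta_n(n+1)$ are exactly $0$; the two matrices are \emph{literally} of the form $\twomat{1}{*}{0}{0}$ and match the matrices $A,B$ in Lemma~\ref{l:two-element-obstruction}(i) with no leftover corner term to absorb. Relatedly, the ``degenerate case'' you gesture at (where only one of $\om(n),\om(n+1)$ is large, invoking part (ii) of the lemma) cannot occur under the hypothesis $\sup_n\min(\om(n),\om(n+1))=+\infty$, which by definition lets you choose $n$ with \emph{both} values large; part (ii) is reserved for the separate Theorem~\ref{t:wt-Nmin-M2-non-unif}.
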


\begin{proof}
Let $\delta>0$. We will construct an $\om$-bounded function $\theta: \Nmin\to\Mat{2}$ which has norm $\leq 2$ (with respect to the $\lom$-norm) and satisfies $\diff_\om(\theta)\leq \delta$, yet also satisfies $\norm{\theta-\phi}_{\infty,\om^{-1}}\geq 1/2$ for every multiplicative function $\phi:\Nmin\to\Mat{2}$.

By our assumption on the weight, there exists $n\in\Nat$ such that $\min(\om(n),\om(n+1))\geq 2/\delta$. We define
\[ \begin{aligned}
\theta(j) & = 0 \quad\text{for all $j\leq n-1$;} \\
\theta(n) & = \twomat{1}{-\om(n)}{0}{0} \\
\theta(n+1) & = \twomat{1}{\om(n+1)}{0}{0} \\
\theta(k) & = I \quad\text{for all $k\geq n+2$.}
\end{aligned} \]

Clearly $\norm{\theta}_{\infty.\om^{-1}} \leq 2$. We claim $\theta$ is $\delta$-multiplicative. Clearly $\theta(j)\theta(k)=\theta(j)=\theta(k)\theta(j)$ whenever $j\leq n-1$ or $k\geq n+2$. We also have $\theta(n)^2=\theta(n)$, $\theta(n+1)^2=\theta(n+1)$, $\theta(n+1)\theta(n)=\theta(n)$, and
\[ \theta(n)\theta(n+1)-\theta(n) = \theta(n+1)-\theta(n) = \twomat{0}{\om(n+1)+\om(n)}{0}{0} . \]
Therefore
\[ \diff_\om(\theta) = \frac{\norm{\theta(n)\theta(n+1)-\theta(n)}}{\om(n)\om(n+1)} = \frac{\om(n+1)+\om(n)}{\om(n)\om(n+1)} = \frac{1}{\om(n)}+\frac{1}{\om(n+1)} \leq \delta ,\]
as claimed.

Finally, let $\phi:\Nmin\to \Mat{2}$ be multiplicative. Then $\phi(n)$ and $\phi(n+1)$ are commuting idempotents, so by Lemma~\ref{l:two-element-obstruction}(i),
\[ \text{either } \norm{\theta(n)-\phi(n)}\geq \om(n)/2 \text{ or } \norm{\theta(n+1)-\phi(n+1)}\geq \om(n+1)/2. \]
Therefore $\norm{\theta-\phi}_{\infty,\om^{-1}}\geq 1/2$.
\end{proof}

\begin{rem}\label{r:BAC-M2-non-amnm}
If $\om$ is any weight on $\Nmin$ with $\liminf_n \om(n)<\infty$, then $\lom(\Nmin)$ is known to be approximately amenable (in fact, it is boundedly approximately contractible, by combining the discussion in Example~\ref{eg:fein-alg} with \cite[Corollary 4.5]{GhLZ_genam2}).
Thus, Theorems~\ref{t:wt-Nmin-T2} and \ref{t:wt-Nmin-M2} show we can have approximately amenable Banach algebras $A$ and finite-dimensional Banach algebras $B$ such that $(A,B)$ is not an AMNM pair, in contrast with what happens when $A$ is amenable \cite[Theorem 3.1]{BEJ_AMNM2}.
\end{rem}

Theorem~\ref{t:wt-Nmin-M2} implies a necessary condition on the weight $\om$ for the pair $(\lom(\Nmin),\Mat{2})$ to be AMNM, and one naturally wonders if this necessary condition is sufficient. Put more explicitly: {\it if $\om:\Nmin\to [1,\infty)$ is a weight function satisfying $\sup_n \min(\om(n),\om(n+1))<\infty$, is $(\lom(\Nmin),\Mat{2})$ always an AMNM pair?}

It turns out that the answer to this question is positive. This was discovered after the main work of the present paper, and the current proof is relatively long and unenlightening, while relying on Theorem~\ref{t:slatt-M2-amnm}. Details will therefore be given in forthcoming work, which treats AMNM problems for more general range algebras.
 The main idea is similar to that in the proof of Theorem~\ref{t:is-AMNM}: given $\delta>0$, one partitions $\Nmin$ into a set where the weight is ``large'' and one where it is ``small''; then given a $\delta$-multiplicative function $\theta: \lom(\Nmin)\to\Mat{2}$, one applies an AMNM result for the unweighted case to constrain the values of $\theta$ on the set where the weight is small, and then adjusts $\theta$ on the set where the weight is large.

For sake of completeness, we include the following result, which should be contrasted with Theorem~\ref{t:slatt-M2-amnm}.

\begin{thm}\label{t:wt-Nmin-M2-non-unif}
Let $\om$ be an unbounded weight on $\Nmin$. Then $(\lom(\Nmin),\Mat{2})$ is not a uniformly AMNM pair.
\end{thm}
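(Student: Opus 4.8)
\textbf{Proof proposal for Theorem~\ref{t:wt-Nmin-M2-non-unif}.}

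The plan is to exploit the fact that Definition~\ref{d:unif-AMNM} imposes no \emph{a~priori} bound on $\norm{T}$, so that we may use approximately multiplicative maps of arbitrarily large norm; this is precisely what makes a counterexample possible for \emph{every} unbounded weight, even though, as noted above, the ordinary AMNM-pair property may well still hold for such weights. Since a uniformly AMNM pair is in particular an AMNM pair, it suffices to produce, for each $\delta>0$, an $\om$-bounded function $\theta:\Nmin\to\Mat{2}$ with $\diff_\om(\theta)<\delta$ while $\norm{\theta-\phi}_{\infty,\om^{-1}}\geq 1$ for every multiplicative $\phi:\Nmin\to\Mat{2}$; by the correspondence recorded in Section~\ref{s:preliminaries} this says exactly that $\dist_{\Lin(\lom(\Nmin),\Mat{2})}(\theta,\Mult(\lom(\Nmin),\Mat{2}))\geq 1$.

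I would first dispose of an easy case: if $\sup_n\min(\om(n),\om(n+1))=+\infty$ then Theorem~\ref{t:wt-Nmin-M2} already shows $(\lom(\Nmin),\Mat{2})$ is not even an AMNM pair, so from now on assume $L:=\sup_n\min(\om(n),\om(n+1))<\infty$. The structural observation driving the rest is then: since $\om$ is unbounded, for every $M$ there is $n$ with $\om(n)>M$, and whenever $\om(n)>L$ we must have $\om(n+1)\leq L$ (else $\min(\om(n),\om(n+1))>L$). Thus $\om$ takes arbitrarily large values at indices $n$ whose successor weight stays bounded by $L$; this one-sided blow-up is exactly the configuration to which Lemma~\ref{l:two-element-obstruction}(ii) applies. (Part~(i), which underlies Theorem~\ref{t:wt-Nmin-M2}, would here only give a lower bound of order $\delta\min(\om(n),\om(n+1))\leq\delta L$, which is too weak.)

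Given $\delta>0$, I would pick such an $n$ with $\om(n)$ large (the threshold fixed at the end), set $d:=\delta\om(n)^2/4$ --- which is $\geq 1$ once $\om(n)\geq 2/\sqrt\delta$ --- put $C:=\twomat{1}{d}{0}{0}$ (note $C^2=C$), and define $\theta(j)=0$ for $j<n$, $\theta(n)=2C$, $\theta(n+1)=C$, and $\theta(k)=I$ for $k\geq n+2$. Since $(2C)C=2C$ and $CI=C$, a routine check of the $\min$-products shows every term in $\diff_\om(\theta)$ vanishes except the one at $(n,n)$, where $\theta(n)^2-\theta(n)=2C$; hence $\diff_\om(\theta)=\norm{2C}\,\om(n)^{-2}=2\sqrt{1+d^2}\,\om(n)^{-2}\leq 2\sqrt2\,d\,\om(n)^{-2}<\delta$. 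Now let $\phi:\Nmin\to\Mat{2}$ be multiplicative; then $\phi(n)$ and $\phi(n+1)$ are commuting idempotents, so Lemma~\ref{l:two-element-obstruction}(ii) forces $\norm{\phi(n)-2C}\geq d/2$ or $\norm{\phi(n+1)-C}\geq d/4$. In the first case $\norm{\theta-\phi}_{\infty,\om^{-1}}\geq \om(n)^{-1}\norm{2C-\phi(n)}\geq d/(2\om(n))=\delta\om(n)/8$; in the second, using $\om(n+1)\leq L$, $\norm{\theta-\phi}_{\infty,\om^{-1}}\geq\om(n+1)^{-1}\norm{C-\phi(n+1)}\geq d/(4L)=\delta\om(n)^2/(16L)$. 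Choosing $n$ with $\om(n)\geq\max\{8/\delta,\,4\sqrt{L/\delta},\,2/\sqrt\delta\}$ makes both of these at least $1$, which completes the construction and hence the proof since $\delta$ was arbitrary.

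The one delicate point is this scale-matching: $d$ must be taken of order $\delta\om(n)^2$ (not merely $\delta\om(n)$), so that after dividing the defect by $\om(n)^2$ it still lies below $\delta$, while the two obstruction terms --- divided by $\om(n)$ and by the \emph{bounded} quantity $\om(n+1)$ respectively --- can be driven above $1$ by letting $\om(n)\to\infty$. The cost is that $\theta$ has $\lom$-norm of order $\delta\om(n)^2/L$: harmless under Definition~\ref{d:unif-AMNM}, but fatal for the norm-bounded AMNM-pair problem, consistently with the fact (recalled above) that these weights nevertheless do give AMNM pairs.
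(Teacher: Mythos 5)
Your proof is correct and follows essentially the same path as the paper: reduce via Theorem~\ref{t:wt-Nmin-M2} to the case $L=\sup_n\min(\om(n),\om(n+1))<\infty$, exploit that $\om(n)>L$ forces $\om(n+1)\leq L$, build $\theta$ supported on $\{n,n+1\}$ using multiples of a rank-one upper-triangular idempotent, and invoke Lemma~\ref{l:two-element-obstruction}(ii) to obstruct any commuting pair of idempotents from being close to $\theta(n),\theta(n+1)$. The only genuine deviation is your choice of off-diagonal entry $d=\delta\om(n)^2/4$ where the paper takes $d=\om(n)$; the paper's scaling is slightly more economical (giving defect $O(1/\om(n))$ and a distance lower bound of $1/2$ with fewer threshold conditions on $\om(n)$), but your scaling is equally valid, and indeed your computation $\theta(n)^2-\theta(n)=2C$ is cleaner than the paper's (which contains a harmless arithmetic slip, stating $3\theta(n+1)$).
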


\begin{proof}
Since uniformly AMNM pairs are {\it a fortiori} AMNM, it suffices by Theorem~\ref{t:wt-Nmin-M2}
 to consider the case where $\om$ is unbounded and $\sup_n \min(\om(n),\om(n+1))<\infty$.

Our argument is very similar to the proof of Theorem~\ref{t:wt-Nmin-M2}. Let $C=\sup_n \min(\om(n),\om(n+1))$, and let $\delta>0$. We will construct an $\om$-bounded function $\theta: \Nmin\to\Mat{2}$ which satisfies $\diff_\om(\theta)\leq \delta$, yet also satisfies $\norm{\theta-\phi}_{\infty,\om^{-1}}\geq 1/2$ for every multiplicative function $\phi:\Nmin\to\Mat{2}$.

The hypotheses on $\omega$ ensure that there exists some $n\in\Nat$ such that $\om(n)\geq \max(6\delta^{-1}, 2C)$ and $\om(n+1)\leq C$.
Define $\theta:\Nmin\to\Mat{2}$ by setting $\theta(j) = 0$ for all $1\leq j \leq n-1$, $\theta(k)=I$ for all $k\geq n+2$, and
\[ \theta(n+1) \defeq \twomat{1}{\om(n)}{0}{0} \quad,\quad
   \theta(n)=2\theta(n+1) = \twomat{2}{2\om(n)}{0}{0}  . \]
Then $\theta$ is an $\om$-bounded map $\Nmin\to \Mat{2}$, with
\[ \begin{aligned}
\norm{\theta}_{\infty,\om^{-1}}
 & = \max \left(
 \frac{\norm{\theta(n)} }{\om(n)} \ , \ \frac{\norm{\theta(n+1)} }{\om(n+1)} \right) \\
 & \leq
 \max \left( \frac{2+2\om(n)}{\om(n)}  \ , \ \frac{1+\om(n) }{\om(n+1)} \right)
 \leq \frac{2\om(n)}{\om(n+1)} .
\end{aligned} \]  

Clearly $\theta(j)\theta(k)=\theta(j)=\theta(k)\theta(j)$ whenever $1\leq j \leq n-1$ or $n+2\leq k$. Furthermore, $\theta(n+1)^2=\theta(n+1)$ and $\theta(n)\theta(n+1)=\theta(n)=\theta(n)\theta(n+1)$, while $\theta(n)^2-\theta(n) = 3\theta(n+1)$. Therefore
\[ \diff_\om(\theta) = \frac{ \norm{\theta(n)^2-\theta(n)} }{\om(n)^2} = \frac{3 \norm{\theta(n+1)} }{\om(n)^2} \leq \frac{6}{\om(n)} \leq \delta. \]

Now let $\phi:\Nmin\to \Mat{2}$ be a multiplicative function. Then $\phi(n)$ and $\phi(n+1)$ are commuting idempotents, so by taking $C=\theta(n+1)$ in Lemma~\ref{l:two-element-obstruction}(ii), 
\[ \text{either } \norm{\theta(n)-\phi(n)}\geq \om(n)/2 \text{ or } \norm{\theta(n+1)-\phi(n+1)}\geq \om(n)/4 \geq \om(n+1)/2.\]
Therefore
$\norm{\theta-\phi}_{\infty,\om^{-1}} \geq 1/2$.
\end{proof}


Theorems~\ref{t:wt-Nmin-T2} and \ref{t:wt-Nmin-M2} suggest that if we seek positive results, we are better off considering the unweighted convolution algebras of semilattices. This is corroborated by the final result of this section.

\begin{thm}\label{t:AMNM-T2}
Let $S$ be a semilattice. Then $(\ell^1(S),\sT_2)$ is a uniformly AMNM pair.
\end{thm}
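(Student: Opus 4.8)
The plan is to exploit the structure of multiplicative maps into $\sT_2$ established in Remark~\ref{r:approx-der}: a bounded $\Psi\in\Mult(A,\sT_2)$ has the form $\Psi(a)=\left(\begin{smallmatrix}\varphi(a)&D(a)\\0&\varphi(a)\end{smallmatrix}\right)$ with $\varphi$ a character and $D$ a point derivation at $\varphi$; and since $\ell^1(S)$ has a dense subalgebra of commuting idempotents, every such $D$ is $0$. Thus the candidate multiplicative maps $\ell^1(S)\to\sT_2$ are exactly the ``diagonal'' maps $a\mapsto\varphi(a)\cdot I$ with $\varphi\in\Mult(\ell^1(S),\Cplx)$, equivalently those coming from filters via Lemma~\ref{l:which-sets}. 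So given $\delta>0$ and a $\delta$-multiplicative $T:\ell^1(S)\to\sT_2$, write $T$ in coordinates as $T(a)=\left(\begin{smallmatrix}\psi(a)&\beta(a)\\0&\psi(a)\end{smallmatrix}\right)$ for functionals $\psi,\beta\in\ell^1(S)^*$; one checks directly from the formula for multiplication in $\sT_2$ that $\diff(\psi)\leq\diff(T)$ and that $\beta$ is an ``approximate point derivation'' in the sense that $\norm{\beta(ab)-\psi(a)\beta(b)-\beta(a)\psi(b)}\leq\diff(T)\norm a\norm b$.

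First I would apply Proposition~\ref{p:scalars} to the functional $\psi$ on $\ell^1(S)$ (after checking $\diff(\psi)$ is small, which it is once $\delta<1/5$): this produces a filter $F\subseteq S$ with indicator $\chi_F=:\chi$ multiplicative and $\norm{\psi-\chi}_\infty\leq\tfrac75\diff(\psi)\leq\tfrac75\delta$. The map $a\mapsto\chi(a)I$ is then a genuine element of $\Mult(\ell^1(S),\sT_2)$, and $\norm{T-(\chi\cdot I)}\leq\norm{\psi-\chi}+\norm\beta$ in the obvious coordinatewise sense. The scalar part is under control, so the whole problem reduces to showing that an approximate point derivation $\beta$ on $\ell^1(S)$ with small defect must itself be small in norm — uniformly, with no \emph{a priori} bound on $\norm\beta$ needed (that uniformity is exactly what ``uniformly AMNM'' demands, and it is available here because, as in the $B=C(X)$ remark after Definition~\ref{d:unif-AMNM}, $\delta$-multiplicativity into $\sT_2$ already forces $\norm T\leq 1+\text{const}\cdot\delta$ on the idempotent generators).

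The heart of the matter is therefore the following: if $e\in S$ is an idempotent, then $e\cdot e=e$ in $\ell^1(S)$, so $\beta(e)=\beta(e\cdot e)$ differs from $2\psi(e)\beta(e)$ by at most $\diff(T)$, giving $\abs{(1-2\psi(e))\beta(e)}\leq\delta$. Since $\psi(e)$ is within $\tfrac75\delta$ of $\chi(e)\in\{0,1\}$, the factor $1-2\psi(e)$ is bounded away from $0$ (in modulus at least $1-\tfrac{14}{5}\delta$) once $\delta$ is small, whence $\abs{\beta(e)}\leq\delta/(1-\tfrac{14}{5}\delta)$. As $\beta$ is determined by its values on the point masses $\delta_e$, $e\in S$ (which are the idempotents generating $\ell^1(S)$ and all have norm $1$), this bounds $\norm\beta_\infty\leq\delta/(1-\tfrac{14}{5}\delta)$, hence $\norm\beta$ as a functional. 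Combining, $\dist(T,\Mult(\ell^1(S),\sT_2))\leq\norm{\psi-\chi}+\norm\beta\leq\tfrac75\delta+\delta/(1-\tfrac{14}{5}\delta)$, which tends to $0$ as $\delta\searrow0$; choosing $\delta$ small enough to make this $<\veps$ finishes the proof. The one point that needs a little care — and which I expect to be the main technical obstacle — is verifying that controlling $\beta$ on the point masses genuinely controls $\norm\beta$ as a bounded functional on $\ell^1(S)$, i.e.\ that $\norm\beta=\sup_{e\in S}\abs{\beta(\delta_e)}$; but this is exactly the standard fact recalled in Section~\ref{s:preliminaries} that bounded functionals on $\ell^1(S)$ correspond isometrically to bounded functions $S\to\Cplx$, so it is routine.
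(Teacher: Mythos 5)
Your proposal is correct and follows essentially the same route as the paper's own proof: write $T(\delta_e)=\bigl(\begin{smallmatrix}\psi(e)&\beta(e)\\0&\psi(e)\end{smallmatrix}\bigr)$, observe $\diff(\psi)\leq\diff(T)$, invoke Proposition~\ref{p:scalars} to replace $\psi$ by a genuine character $\chi$, and then control $\beta$ on point masses via the fact that $\abs{(2\psi(e)-1)\beta(e)}\leq\diff(T)$ while $\abs{2\psi(e)-1}$ is bounded away from $0$ because $\psi(e)$ is near $\{0,1\}$. The only cosmetic difference is that the paper bounds the two terms $\abs{a(e)^2-a(e)}$ and $\abs{(2a(e)-1)b(e)}$ jointly (using that their sum is $\leq\diff(\theta)$) to get the slightly sharper constant $\tfrac{25}{11}$, whereas you bound them separately.
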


\begin{proof}
Let $\theta:S\to \sT_2$ be a function satisfying $\diff(\theta) < 1/5$, and let $a,b:S\to\Cplx$ be the functions defined by
\[ \theta(e) = \twomat{a(e)}{b(e)}{0}{a(e)} \qquad(e\in S). \]
Since
\begin{equation}\label{eq:product-in-T2} 
\twomat{a(e)}{b(e)}{0}{a(e)}\twomat{a(f)}{b(f)}{0}{a(f)}=\twomat{a(e)a(f)}{a(e)b(f)+b(e)a(f)}{0}{a(e)a(f)}
\end{equation}
we see that $\diff(a) \leq \diff(\theta) < 1/5$.
By Proposition~\ref{p:scalars}, there exists a multiplicative function $\chi: S \to \Cplx$ such that
\[ \abs{a(e)-\chi(e)} < \frac{7}{5}\abs{a(e)^2-a(e)} < \frac{7}{5}\diff(\theta) < 7/25 \quad\text{for all $e\in S$.} \]
Note that
\[ \begin{aligned}
\diff(\theta) & \geq \sup_{e \in S} \norm{\theta(e)^2-\theta(e)} \\
& = \sup_{e\in S} \Norm{ \twomat{a(e)^2-a(e)}{( 2a(e)-1 )b(e)}{0}{a(e)^2-a(e)} } \\
& = \sup_{e\in S} \abs{a(e)^2-a(e)} + \abs{2a(e)-1}\ \abs{b(e)}.
\end{aligned} \]
Let $e\in S$.
Since
$\abs{a(e)^2-a(e)}\leq \diff(a) \leq\diff(\theta)<1/5$, applying Lemma~\ref{l:in-C} and using the estimate \eqref{eq:rho(1/5)} yields
\[ \min(\abs{a(e)}, \abs{1-a(e)} ) < \rho\left(\frac{1}{5}\right) \diff(a) < \frac{7}{25};\]
therefore $a(e) \in \oD_0(7/25) \cup \oD_1(7/25)$, so that
\[  \abs{\frac{1}{2}-a(e)} \geq \frac{1}{2}-\frac{7}{25}  = \frac{11}{50} \,. \]
Hence
\[ \begin{aligned}
\norm{\theta(e)-\chi(e)I}
& = \abs{a(e)-\chi(e)}+\abs{b(e)} \\
& \leq \frac{7}{5}\abs{a(e)^2-a(e)} + \frac{25}{11}\abs{(2a(e)-1)b(e)} \leq \frac{25}{11}\diff(\theta) ,
\end{aligned} \]
as required.
\end{proof}

It is now natural to ask if $(\ell^1(S),\Mat{2})$ is an AMNM pair. The answer turns out to be yes -- in fact, it is always a \emph{uniformly} AMNM pair -- but the proof is considerably harder, and occupies all of the next section.
\end{section}

\begin{section}{$(\ell^1(S),\Mat{2})$ is AMNM for any semilattice $S$}\label{s:AMNM-M2}

For reasons of technical convenience, we shall work mostly with the 
Hilbert-Schmidt norm on $\Mat{2}$, defined by $\hsnorm{A}^2 = \tr(A^*A)^{1/2}$. It might be conceptually clearer to use the operator norm throughout, but this seems to yield worse constants in later inequalities, which are obtained by bootstrapping up the earlier ones.

\begin{notn}
If $A\in \Mat{2}$ and $\veps>0$ let
\[ \clB_A(\veps) = \{ B \in \Mat{2} \st \hsnorm{A-B} \leq \veps \} .\]
(the closed ball of radius $\veps$ centred on $A$, in the Hilbert-Schmidt norm).
\end{notn}

\begin{thm}\label{t:slatt-M2-amnm}
Let $\delta< 0.03$, and let $\theta: S \to \Mat{2}$ be a function satisfying
\begin{equation}\label{eq:HS-delta-mult}
\sup_{e.f\in S} \hsnorm{\theta(e)\theta(f)-\theta(ef)} \leq \delta .
\end{equation}
Then there exists a (bounded) multiplicative function $\phi: S\to \Mat{2}$ such that
\[ \sup_{x\in S} \hsnorm{\theta(x)-\phi(x)} \leq 12 \delta.\]
In particular, $(\ell^1(S),\Mat{2})$ is a uniformly AMNM pair.
\end{thm}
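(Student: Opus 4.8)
The plan is to construct the multiplicative $\phi$ by stratifying $S$ according to a robust, trace-based surrogate for the rank of $\theta(e)\in\Mat{2}$, and to treat separately the strata on which $\theta$ is forced to be close to $0$, close to $I$, or close to a rank-one idempotent. The closing assertion (that $(\ell^1(S),\Mat{2})$ is uniformly AMNM) is then routine: via the dictionary of \eqref{eq:defect-formula} with $\om\equiv1$, a $\delta$-multiplicative bounded map $T:\ell^1(S)\to\Mat{2}$ is the same thing as a function $\theta:S\to\Mat{2}$ with $\diff(\theta)\le\delta$; the theorem produces a multiplicative $\phi$ with $\sup_x\hsnorm{\theta(x)-\phi(x)}\le 12\delta$ (and $\phi$ is automatically bounded, its range being finite), which extends to a multiplicative linear $\Phi$ with $\hsnorm{T-\Phi}\le12\delta$; equivalence of the Hilbert--Schmidt and operator norms on $\Mat{2}$ then gives uniform AMNM after a harmless rescaling of constants, and crucially with no a priori bound on $\norm{T}$ required.

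\emph{Step 1 (one approximately idempotent matrix).} First I would prove: if $A\in\Mat{2}$ satisfies $\hsnorm{A^2-A}\le\delta$ with $\delta<0.03$, then $\tr(A)$ lies within $O(\delta)$ of a \emph{unique} integer $r(A)\in\{0,1,2\}$, and there is an idempotent $p(A)$ with $\tr p(A)=r(A)$ and $\hsnorm{A-p(A)}\le c_1\delta$ for an explicit small $c_1$. Since $\abs{\lambda^2-\lambda}\le\opnorm{A^2-A}\le\hsnorm{A^2-A}\le\delta$ for $\lambda\in\sigma(A)$, Lemma~\ref{l:in-C} places $\sigma(A)$ inside the disjoint union $\oD_0(\rho(\delta)\delta)\cup\oD_1(\rho(\delta)\delta)$. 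If both eigenvalues lie near $0$, Cayley--Hamilton ($A^2=(\tr A)A-(\det A)I$) yields $\hsnorm{(\tr A-1)A}\le\delta+\sqrt2\abs{\det A}$ and hence $\hsnorm{A}=O(\delta)$, so $p(A)=0$; the case ``both near $1$'' follows by applying this to $I-A$, using $(I-A)^2-(I-A)=A^2-A$; and when the eigenvalues are simple, one near $0$ and one near $1$, one takes for $p(A)$ the spectral projection onto the eigenspace for the eigenvalue near $1$ (a polynomial in $A$) and estimates $\hsnorm{A-p(A)}$ directly. This is the ``approximate rank $=$ trace'' principle highlighted in Remark~\ref{r:harbinger}; a delicate point here is that rank-one idempotents in $\Mat{2}$ form an \emph{unbounded} family, so $p(A)$ may itself have large norm even though $\hsnorm{A-p(A)}$ is small. \emph{Step 2 (stratifying $S$).} Put $Z=\{e\in S:r(\theta(e))=0\}$, $U=\{e:r(\theta(e))=2\}$, $M=\{e:r(\theta(e))=1\}$. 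Using $\diff(\theta)\le\delta$ together with the fact that $\theta$ is $c_1\delta$-close to $0$ on $Z$ and to $I$ on $U$, I would check (imitating the corresponding steps in the proof of Proposition~\ref{p:scalars}) that $Z$ is absorbing ($ef\in Z$ whenever $e\in Z$) and $U$ is a filter.

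\emph{Step 3 (the rank-one stratum).} For $e,f\in M$, both $\theta(e)\theta(f)$ and $\theta(f)\theta(e)$ lie within $\delta$ of $\theta(ef)$, hence within $2\delta$ of each other --- this is where commutativity of $S$ is used in an essential way, not just $\delta$-multiplicativity of $\theta$. Since $\theta(ef)$ is approximately idempotent with ``small numerical rank'', $ef\notin U$, so $ef\in Z$ or $ef\in M$. When $ef\in M$ the rank-one idempotents near $\theta(e)$ and $\theta(f)$ approximately commute, and two commuting rank-one idempotents in $\Mat{2}$ are either equal or complementary (the observation preceding Remark~\ref{r:harbinger}); the quantitative version forces $\theta(e)$ and $\theta(f)$ to be within $O(\delta)$ of each other. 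When $ef\in Z$ the same analysis forces these idempotents to be approximately complementary. I would then argue that this relation partitions $M$ into at most two classes $M_1,M_2$, since three pairwise-complementary rank-one idempotents in $\Mat{2}$ cannot exist (again a trace/rank argument). Fixing $e_1\in M_1$, setting $Q_1=p(\theta(e_1))$ and $Q_2=I-Q_1$, one gets $\hsnorm{\theta(e)-Q_1}=O(\delta)$ on $M_1$ and $\hsnorm{\theta(e)-Q_2}=O(\delta)$ on $M_2$. \emph{Step 4 (assembling $\phi$).} Define $\phi:=0$ on $Z$, $\phi:=I$ on $U$, $\phi:=Q_1$ on $M_1$, $\phi:=Q_2$ on $M_2$. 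The set $\{0,Q_1,Q_2,I\}\subseteq\Mat{2}$ is, under matrix multiplication, the four-element Boolean semilattice ($Q_i^2=Q_i$, $Q_1Q_2=Q_2Q_1=0$, with $I$ and $0$ as top and bottom), so it suffices to verify that $e\mapsto(\text{the stratum of }e)$ is a semilattice homomorphism of $S$ onto $\{0,Q_1,Q_2,I\}$ --- precisely the finite case-check assembled in Steps~2--3. Then $\phi$ is multiplicative (by Lemma~\ref{l:which-sets} or directly), and collecting the estimates of Steps~1--3 gives $\sup_x\hsnorm{\theta(x)-\phi(x)}=O(\delta)$, which careful bookkeeping turns into the stated bound $12\delta$.

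The main obstacle I expect is precisely this constant bookkeeping through Steps~1 and~3 under the hypothesis $\delta<0.03$ --- in particular extracting a usable $c_1$ in Step~1 --- compounded by the fact that ``nearest idempotent'' is an unbounded operation on $\Mat{2}$: one must take care to estimate the relevant \emph{differences} ($\theta(e)-\theta(f)$, $\theta(e)-Q_i$), which remain $O(\delta)$, rather than the matrices themselves, and to verify that the constraint that $\phi$ have commuting values (hence constant values on each $M_i$) is compatible with the $12\delta$ bound. A secondary point worth care is the dichotomy ``equal or complementary'' for \emph{approximately} commuting rank-one idempotents, which underpins the bound on the number of classes.
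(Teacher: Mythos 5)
Your overall strategy matches the paper's proof essentially step for step: trace-based stratification of $S$ into approximate-rank-$0$, $1$, $2$ classes (the paper's $S_0,S_1,S_2$ defined in \eqref{eq:attracted}); subsemigroup/ideal properties for the outer strata (Proposition~\ref{p:basics}); splitting the rank-one stratum into at most two classes of commuting rank-one idempotents via the equivalence relation $e\sim f\Leftrightarrow ef\in S_1$ (Lemmas~\ref{l:separated}--\ref{l:chains-in-S1} and Proposition~\ref{p:separate-S1}); and assembling $\phi$ on the four-element Boolean semilattice $\{0,P,I-P,I\}$. The paper proves $S_1\cdot S_1\subseteq S_1\cup S_0$ slightly differently than you sketch (via upward-closedness of $S_2$, Lemma~\ref{l:S2-an-upset}, rather than a direct estimate on $\tr\theta(ef)$), but that is a cosmetic difference.

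There is, however, a genuine gap in your Step~1 in the case $r(A)=1$. You propose to take $p(A)$ to be the spectral projection onto the eigenspace for the eigenvalue near $1$. This projection need \emph{not} be within $O(\hsnorm{A^2-A})$ of $A$. For small $\veps>0$ and arbitrarily large $M>0$, set
\[ A=\twomat{\veps}{M}{0}{1-\veps}. \]
Then $A^2-A=(\veps^2-\veps)I$, so $\hsnorm{A^2-A}=\sqrt{2}\abs{\veps-\veps^2}$, while the spectral projection for the eigenvalue $1-\veps$ is
\[ p(A)=\twomat{0}{M/(1-2\veps)}{0}{1},\qquad A-p(A)=\twomat{\veps}{-2M\veps/(1-2\veps)}{0}{-\veps}, \]
so $\hsnorm{A-p(A)}\approx 2M\veps$, larger than $\hsnorm{A^2-A}$ by a factor of order $M$. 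So the estimate you hope to carry out ``directly'' is false, and since Step~3 relies on the existence of rank-one idempotents $O(\delta)$-near $\theta(e)$ for $e\in S_1$, the flaw propagates. The paper's Lemma~\ref{l:2by2-key}(b) avoids this by passing to Schur upper-triangular form $A=\twomat{a}{b}{0}{d}$ and taking $P=\twomat{N(a)}{b}{0}{N(d)}$ with $N$ the rounding to $\{0,1\}$: this $P$ is idempotent precisely because it keeps the exact off-diagonal entry $b$, and then $\hsnorm{A-P}\le\rho(\veps)\hsnorm{A-A^2}$ with no off-diagonal contribution. (In the example, $P=\twomat{0}{M}{0}{1}$ and $\hsnorm{A-P}=\sqrt{2}\veps$.) Replacing your spectral-projection construction with this Schur-form idempotent, the rest of your outline can be made to go through along the lines you describe, and is indeed how the paper proceeds.
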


Note that we do not assume in \eqref{eq:HS-delta-mult} that $\sup_{e\in S}\hsnorm{\theta(e)}< \infty$, but that this will emerge during the proof (see Proposition~\ref{p:separate-S1}).

\subsection{Motivating the proof of Theorem~\ref{t:slatt-M2-amnm}}
Let $S$ be a semilattice. Our proof that $(\ell^1(S),\Cplx)$ is a uniformly AMNM pair can be broken down into three steps:
\begin{enumerate}
\item Show that there is a constant $c$, such that whenever $\theta:S\to\Cplx$ is $\delta$-multiplicative, $\theta(S)\subseteq \clD_0(c\delta)\cup \clD_1(c\delta)$.
(In effect, this step ``approximately discretizes'' the problem.)
\item Put $S_k\defeq \theta^{-1}(\clD_k(c\delta))$ for $k=0,1$, so that $S$ is partitioned as $S_1\cup S_0$. Show that $S_1\cdot S_1 \subseteq S_1$, $S_1\cdot S_0 \subseteq S_0$, and $S_0\cdot S_0 \subseteq S_0$.
(Although we did not do these calculations explicitly, they are implicit in the process of checking $S_1$ is a filter in~$S$.)
\item Define $\phi: S \to \{0,1\}$ by $\phi=1$ on $S_1$ and $\phi=0$ on $S_0$. By the first step, $\norm{\phi-\theta}_\infty\leq c\delta$, and by the second step, $\phi$ is multiplicative.
\end{enumerate}

The strategy we shall adopt is to mimic each of these steps, but now allow our maps to take values in $\Mat{2}$ rather than $\Cplx$.
As a first step, we need some characterization of multiplicative functions $\phi:S\to \Mat{2}$, which reduces down to the problem of describing the possible semilattices inside the multiplicative semigroup $\Mat{2}$. This is not too hard, once we make the following observation:
\textit{if $P\in\Mat{2}$ is a rank-one idempotent, then the only idempotents which commute with $P$ are $I$, $P$, $I-P$ and $0$.}
(We will see later that there is an ``approximate version'' of this.)

Secondly, observe that if $\theta$ is -- as claimed -- a perturbation of a multiplicative function~$\phi$, then by the previous remarks $\theta(S)$ should be contained in a small-ball neighbourhood of $\phi(S)$, which in turn is contained in a set of at most four commuting idempotents.
To prove Theorem~\ref{t:slatt-M2-amnm}, we reverse this line of reasoning, and identify a commuting set $L$ of idempotents in $\Mat{2}$, a small-ball neighbourhood of which will contain $\theta(S)$.
(See Proposition~\ref{p:separate-S1} for the details.) Then, since the elements of $L$ are well-separated, there is only one realistic candidate for the map $\phi$: namely, it should send a given $x\in S$ to the element of $L$ nearest to $\theta(x)$.
This map $\phi$ will clearly be close in norm to $\theta$, so all that will remain is to check that $\phi$ is multiplicative: this can be done through a case-by-case analysis, although some work is needed since we do not assume $\sup_{e\in S}\hsnorm{\theta(e)}<\infty$.

To identify the set $L$, we make heavy use of a small but technical result, based on the following idea: the trace of an approximately idempotent $2\times 2$ matrix must be close to an integer, which then equals the rank of any nearby idempotent. This will be made precise in the next lemma.

\subsection{A key technical lemma} 
The following lemma is our basic tool for working with approximately idempotent elements of $\Mat{2}$. It is here that the Hilbert-Schmidt norm seems to be convenient.

\begin{lem}[Key estimates]\label{l:2by2-key}
\
\begin{itemize}
\item[\rm(a)]
Let $\rho(0)= 1$ and
\[ \rho(t)= \frac{1}{2t}\left(1-\sqrt{1-4t}\right) \qquad(0 <t \leq 1/4)\]
and let
\[
\kp(t) = \left( 1- \rho(t)t\sqrt{2}\right)^{-1}  \qquad(0\leq t \leq 1/4).
\]
Then $\rho$ and $\kp$ are increasing functions, with $\rho(t)\leq\kp(t)$ for all $t\in [0,1/4]$. Moreover,
\begin{equation}\label{eq:nice-rho}
\rho\left(\frac{n}{(n+1)^2}\right) = \frac{n+1}{n}
\end{equation}
and
\begin{equation}\label{eq:bound_regulator}
 \kp\left(\frac{n}{(n+1)^2}\right) = 
 \left( 1- \frac{\sqrt{2}}{n+1}\right)^{-1} < \left(1- \frac{10}{7(n+1)}\right)^{-1} .
\end{equation}
\item[\rm(b)]
Let $A\in \Mat{2}$ satisfy $\hsnorm{A-A^2}\leq\veps  < 2/9$. Then
\begin{equation}\label{eq:2A-I}
\hsnorm{2A-I} \geq (2-6\hsnorm{A-A^2})^{1/2}\,,
\end{equation}
and
\begin{equation}\label{eq:coarse-confine-trace}
 \tr(A) \in \bigcup_{j\in \{0,1,2\}} \clD_j(\sqrt{2}\rho(\veps)\veps) \subseteq \bigcup_{j\in \{0,1,2\}} \clD_j(10/21). 
\end{equation}
Moreover:
\begin{itemize}
\item
 if $\abs{\tr(A)-2} < 1/2$, then $\hsnorm{I-A}\leq \kp(\veps)\veps$;
\item
 if $\abs{\tr(A)} < 1/2$, then $\hsnorm{A}\leq \kp(\veps)\veps$;
\item
 if $\abs{\tr(A)-1}<1/2$ then there exists a rank-one idempotent $P\in\Mat{2}$ satisfying $\hsnorm{I-P}\leq \rho(\veps)\veps$.
\end{itemize}
\end{itemize}
\end{lem}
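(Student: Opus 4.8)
The plan is to reduce part~(b) to the scalar estimate of Lemma~\ref{l:in-C} by means of two elementary facts about $2\times2$ matrices: \emph{Schur's inequality} $\sum_i\abs{\mu_i(M)}^2\le\hsnorm{M}^2$ relating the eigenvalues of $M$ to its Hilbert--Schmidt norm, and the Cauchy--Schwarz bound $\abs{\tr M}\le\sqrt2\,\hsnorm{M}$; a Schur triangularization then handles the three ``moreover'' conclusions. Part~(a) is routine: substituting $u=1-\sqrt{1-4t}$ gives $t=u(2-u)/4$, and as $t$ ranges over $[0,1/4]$ the quantity $u$ increases over $[0,1]$. Since $\rho(t)t=f(t)=u/2$ one gets $\rho(t)=2/(2-u)$ and $\kp(t)=2/(2-\sqrt2\,u)$; as $1<\sqrt2<2$, both are increasing in $u$ with positive denominators on $[0,1]$ (monotonicity), and $2-\sqrt2u\le2-u$ gives $\rho\le\kp$. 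For $t=n/(n+1)^2$ we have $1-4t=((n-1)/(n+1))^2$, hence $u=2/(n+1)$, which yields \eqref{eq:nice-rho} and the equality in \eqref{eq:bound_regulator}; the last inequality of \eqref{eq:bound_regulator} is just $\sqrt2<10/7$ (i.e.\ $98<100$), with $1-10/(7(n+1))>0$ for $n\ge1$.

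For part~(b) write $W=A-A^2$. Inequality \eqref{eq:2A-I} comes from the identity $(2A-I)^2=I-4W$: applying $\hsnorm{M}^2\ge\abs{\tr(M^2)}$ to $M=2A-I$, then $\abs{\tr W}\le\sqrt2\,\hsnorm{W}$ and $4\sqrt2<6$, gives $\hsnorm{2A-I}^2\ge\abs{2-4\tr W}\ge2-4\sqrt2\,\hsnorm{W}\ge2-6\hsnorm{A-A^2}$. For \eqref{eq:coarse-confine-trace}, let $\lambda_1,\lambda_2$ be the eigenvalues of $A$; then $W$ has eigenvalues $\lambda_i-\lambda_i^2$, so by Schur's inequality the numbers $\veps_i:=\abs{\lambda_i-\lambda_i^2}$ satisfy $\veps_1^2+\veps_2^2\le\hsnorm{W}^2\le\veps^2$, so in particular each $\veps_i\le\veps<1/4$. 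By Lemma~\ref{l:in-C}, $\dist_\Cplx(\lambda_i,\{0,1\})\le\rho(\veps_i)\veps_i$, and since $\rho$ is increasing, Cauchy--Schwarz gives $\dist_\Cplx(\tr A,\{0,1,2\})\le\rho(\veps_1)\veps_1+\rho(\veps_2)\veps_2\le\sqrt2\,\rho(\veps)\veps$. Finally $\rho(\veps)\veps=f(\veps)\le f(2/9)=\rho(2/9)\cdot\tfrac29=\tfrac13$ (using $2/9=2/(2+1)^2$ and \eqref{eq:nice-rho}), so $\sqrt2\,\rho(\veps)\veps\le\sqrt2/3<10/21$; the discs $\clD_j(10/21)$, $j\in\{0,1,2\}$, are pairwise disjoint ($20/21<1$), which proves \eqref{eq:coarse-confine-trace} and shows that the hypothesis $\abs{\tr A-j}<1/2$ in a given bullet pins down which disc contains $\tr A$, hence forces $\abs{\tr A-j}\le\sqrt2\,\rho(\veps)\veps$.

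Now the three bullets. Take $\abs{\tr A-2}<1/2$. Each $\lambda_i$ lies within $\rho(\veps_i)\veps_i$ of exactly one of $0,1$ (not both, as $\rho(\veps_i)\veps_i<1/3$); if the nearby point were $0$, then $\tr A$ would lie within $\rho(\veps_1)\veps_1+\rho(\veps_2)\veps_2\le\sqrt2\,\rho(\veps)\veps$ of $\{0,1\}$, hence at distance $\ge1-\sqrt2\,\rho(\veps)\veps>1/2$ from $2$ --- impossible. So both $\lambda_i$ lie within $\rho(\veps_i)\veps_i\le\rho(\veps)\veps$ of $1$. Writing $A=U\twomat{\lambda_1}{c}{0}{\lambda_2}U^{*}$ in Schur triangular form ($U$ unitary) and setting $d_i=1-\lambda_i$, one computes
\[ (I-A)-(I-A)^2=U\twomat{d_1-d_1^2}{-c(1-d_1-d_2)}{0}{d_2-d_2^2}U^{*}, \]
whose squared Hilbert--Schmidt norm equals $\hsnorm{A-A^2}^2\le\veps^2$. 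Using $\abs{d_i-d_i^2}\ge\abs{d_i}\bigl(1-\rho(\veps)\veps\bigr)$ and $\abs{1-d_1-d_2}=\abs{\tr A-1}\ge1-\sqrt2\,\rho(\veps)\veps$, this gives
\[ \bigl(1-\rho(\veps)\veps\bigr)^2\bigl(\abs{d_1}^2+\abs{d_2}^2\bigr)+\bigl(1-\sqrt2\,\rho(\veps)\veps\bigr)^2\abs{c}^2\ \le\ \veps^2. \]
Since $\rho(\veps)\veps\le\tfrac13<\tfrac1{\sqrt2}$, the first coefficient exceeds the second (both positive), so dividing through by the smaller one gives $\hsnorm{I-A}^2=\abs{d_1}^2+\abs{d_2}^2+\abs{c}^2\le\veps^2/(1-\sqrt2\,\rho(\veps)\veps)^2=(\kp(\veps)\veps)^2$. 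The case $\abs{\tr A}<1/2$ is identical with $A$ in place of $I-A$ and $0$ in place of $2$. For $\abs{\tr A-1}<1/2$, the same exclusion argument shows that exactly one eigenvalue, say $\lambda_1$, lies within $\rho(\veps_1)\veps_1$ of $0$ and $\lambda_2$ within $\rho(\veps_2)\veps_2$ of $1$; in a Schur basis $A=U\twomat{\lambda_1}{c}{0}{\lambda_2}U^{*}$, the matrix $J:=U\twomat{-1}{2c}{0}{1}U^{*}$ satisfies $J^2=I$ and $\tr J=0$, so $P:=\tfrac12(I+J)$ is a rank-one idempotent, and from $A=\tfrac12\bigl(I+(2A-I)\bigr)$ we get $A-P=\tfrac12\bigl((2A-I)-J\bigr)=U\twomat{\lambda_1}{0}{0}{\lambda_2-1}U^{*}$, so $\hsnorm{A-P}=\bigl(\abs{\lambda_1}^2+\abs{\lambda_2-1}^2\bigr)^{1/2}\le\bigl(\rho(\veps_1)^2\veps_1^2+\rho(\veps_2)^2\veps_2^2\bigr)^{1/2}\le\rho(\veps)\veps$. (I read the third bullet's ``$\hsnorm{I-P}$'' as a typo for $\hsnorm{A-P}$.)

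The only substantive obstacle is bookkeeping: one must keep the local quantities $\veps_i=\abs{\lambda_i-\lambda_i^2}$ distinct from the global bound $\veps$, and verify the handful of numerical inequalities ($\sqrt2<10/7$, $4\sqrt2<6$, $20/21<1$, $f(2/9)=\tfrac13$) that make the stated constants come out exactly. The one step that requires a little foresight is to feed the \emph{refined} trace bound $\abs{\tr A-j}\le\sqrt2\,\rho(\veps)\veps$ --- rather than merely the hypothesis $<1/2$ --- into the estimate for $\abs{1-d_1-d_2}$: that is exactly what produces the factor $1-\sqrt2\,\rho(\veps)\veps$, and hence $\kp(\veps)$, whereas the cruder bound yields only a weaker constant.
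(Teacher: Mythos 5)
Your proof is correct, and it follows essentially the same line of argument as the paper: reduce to a Schur triangular form, control the entries via the scalar estimate of Lemma~\ref{l:in-C}, and then untangle the Hilbert--Schmidt norm entrywise. The presentational differences are worth noting, as some of them are genuine small improvements. For \eqref{eq:2A-I} you use the identity $(2A-I)^2 = I-4(A-A^2)$ together with $\hsnorm{M}^2 \ge \abs{\tr(M^2)}$ (Schur's inequality plus $\abs{\sum\mu_i^2}\le\sum\abs{\mu_i}^2$), which is cleaner than the paper's entrywise bound $\abs{2a-1}^2 = \abs{1-4(a-a^2)}\ge 1-4\abs{a-a^2}$, though the two are numerically identical. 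For \eqref{eq:coarse-confine-trace} you phrase everything in terms of the eigenvalues $\lambda_i$ and the local quantities $\veps_i=\abs{\lambda_i-\lambda_i^2}$ with $\veps_1^2+\veps_2^2\le\veps^2$ (Schur), where the paper works with the triangular entries $a,d$ directly and the quantity $(\abs{a-a^2}^2+\abs{d-d^2}^2)^{1/2}$; these are the same numbers, so the estimates coincide. In the first two bullets, your lower bound $\abs{d_i-d_i^2}\ge\abs{d_i}(1-\rho(\veps)\veps)$ and subsequent division by the smaller coefficient $(1-\sqrt2\rho(\veps)\veps)^2$ is the same calculation as the paper's, up to the identity $\rho(\veps)^{-1}=1-\rho(\veps)\veps$; the paper instead uses $\abs{a-N(a)}\le\rho(\veps)\abs{a-a^2}$ and $\abs{1-a-d}^{-1}\le\kp(\veps)$ together with $\rho\le\kp$, arriving at the same bound. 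In the third bullet your idempotent $P=\tfrac12(I+J)$ with $J=U\twomat{-1}{2c}{0}{1}U^{*}$ is, after conjugating back, exactly the paper's $P=\twomat{N(a)}{b}{0}{N(d)}$, and the computation $\hsnorm{A-P}^2=\abs{\lambda_1}^2+\abs{\lambda_2-1}^2$ matches. You correctly read the lemma's ``$\hsnorm{I-P}\le\rho(\veps)\veps$'' as a typo for ``$\hsnorm{A-P}\le\rho(\veps)\veps$'': since $I-P$ is a rank-one idempotent, $\hsnorm{I-P}\ge1$, so the statement as printed is vacuous, and the paper's own proof establishes the bound on $\hsnorm{A-P}$. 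Part~(a) is also fine: the substitution $u=1-\sqrt{1-4t}$ giving $\rho=2/(2-u)$, $\kp=2/(2-\sqrt2u)$ is a tidy way to organize the same algebra the paper does via the identity $\kp(t)^{-1}+\sqrt2-1=\sqrt2\rho(t)^{-1}$.
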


\begin{proof}[Proof of (a)]
The formulas \eqref{eq:nice-rho} and \eqref{eq:bound_regulator} follow from the definitions of the functions $\rho$ and $\kp$ by direct calculation.
Moreover: we already saw (see the remarks after the formula \eqref{eq:key-function}) that $\rho$ is increasing on $[0,1/4]$, and therefore $\kp$ is also increasing on $[0,1/4]$. 

It only remains to prove that $\rho(t)\leq\kp(t)$ for all $t\in [0,1/4$]. Since $1=\kp(0)=\rho(0) < \rho(t)$ for all $0 < t\leq 1/4$, it suffices to show that $\kp(t)^{-1} < \rho(t)^{-1}$ for all such $t$. To do this, observe that
\[ \begin{aligned}
\kp(t)^{-1}+\sqrt{2}-1
 & = \sqrt{2}-\sqrt{2}\rho(t)t \\
 & = \sqrt{2}\left( 1- \frac{1}{2}\left(1-\sqrt{1-4t}\right)\right) \\
 & = \sqrt{2}\left(\frac{1}{2}+\frac{1}{2}\sqrt{1-4t}\right),
\end{aligned} \]
while
\[ \left(1+\sqrt{1-4t}\right)\rho(t) = \frac{1}{2t} \left(1 - (1-4t) \right) = 2 ; \]
combining these two identities yields
\[ \kp(t)^{-1}+\sqrt{2}-1 = \sqrt{2}\rho(t)^{-1}, \]
so that
\[ \kp(t)^{-1}-\rho(t)^{-1} = (\sqrt{2}-1) \rho(t)^{-1} - (\sqrt{2}-1) < 0 \]
as required.
\end{proof}

\begin{proof}[Proof of (b)]
By conjugating with an appropriate unitary matrix, we may assume \WLOG\ that $A$ is upper triangular, say
$A = \twomat{a}{b}{0}{d}$. Then
\[ A-A^2 = \twomat{a-a^2}{b(1-a-d)}{0}{d-d^2} \]
so that
\begin{equation}\label{eq:ORLY}
\abs{a-a^2}^2 + \abs{b(1-a-d)}^2 + \abs{d-d^2}^2 \leq
\hsnorm{A-A^2}^2  \leq \veps^2.
\end{equation}
Therefore,
\[ \begin{aligned}
\hsnorm{2A-I}^2 
 & \geq \abs{2a-1}^2+\abs{2d-1}^2 \\
 & \geq 2 - 4\abs{a-a^2} - 4\abs{d-d^2} \\
 & \geq 2 - 4\sqrt{2}\left(\abs{a-a^2}^2+\abs{d-d^2}^2\right)^{1/2} \\
 & \geq 2 - 6\left(\abs{a-a^2}^2+\abs{d-d^2}^2\right)^{1/2} \\
 & \geq 2- 6 \hsnorm{A-A^2},
\end{aligned} \]
and we have proved \eqref{eq:2A-I}.

It also follows from \eqref{eq:ORLY}, by using Lemma~\ref{l:in-C}, that
\[ \begin{aligned}
& \dist_{\Cplx}(a,\{0,1\}) \leq \rho(\veps)\abs{a-a^2} & \leq \rho\left(\frac{2}{9}\right) \frac{2}{9} = \frac{1}{3} , \\
\text{and} & \dist_{\Cplx}(d,\{0,1\}) \leq \rho(\veps)\abs{d-d^2} & \leq \rho\left(\frac{2}{9}\right) \frac{2}{9} = \frac{1}{3}.
\end{aligned} \]

Define the function $N: \clD_0(1/3) \cup \clD_1(1/3) \to \{0,1\}$ to take the value $i$ on $\clD_i(1/3)$ for $i=0,1$. By Cauchy--Schwarz,
\[ \begin{aligned}
\abs{a-N(a)}+\abs{d-N(d)}
 & \leq \rho(\veps) ( \abs{a-a^2} +\abs{d-d^2} ) \\
 & \leq \rho(\veps)\sqrt{2}  ( \abs{a-a^2}^2 +\abs{d-d^2}^2 )^{1/2} \leq \rho(\veps)\sqrt{2}\veps\,;
\end{aligned} \]
and since $\rho$ is an increasing function,
\[ \rho(\veps)\sqrt{2}\veps \leq \frac{10}{7}\rho(\veps)\veps \leq \frac{10}{7}\rho\left(\frac{2}{9}\right)\frac{2}{9} = \frac{10}{21}. \]

Observe that if $r\in\Zahl$ satisfies $\abs{\tr(A)-r}\leq 1/2$, then
\[ \abs{r-N(a)-N(d)} < \frac{1}{2}+\frac{10}{21} < 1,\]
forcing $r=N(a)+N(d)$.
So $r\in\{0,1,2\}$, and we always have
\begin{equation}\label{eq:il-faut-retourner}
 \abs{\tr(A)- r} = \abs{a+d-N(a)-N(d)} < \rho(\veps)\sqrt{2}\veps \leq \frac{10}{21}\,,
\tag{$*$}
\end{equation}
giving us the inclusions in \eqref{eq:coarse-confine-trace}.

\medskip
Finally, we show that $A$ is always $\hsnorm{\cdot}$-close to an idempotent of the appropriate rank.
We first address the cases where exactly one of $N(a)$ and $N(d)$ is equal to $1$, with the other being equal to $0$. In both of these cases we define
\[ P = \twomat{N(a)}{b}{0}{N(d)} . \]
A small calculation shows that $P=P^2$. By construction,
\[  \begin{aligned}
\hsnorm{A-P}
 & = \left(\abs{a-N(a)}^2+\abs{d-N(d)}^2\right)^{1/2} \\
 & \leq \rho(\veps) \left( \abs{a-a^2}^2 + \abs{d-d^2}^2\right)^{1/2} \leq \rho(\veps)\hsnorm{A-A^2}.
\end{aligned} \]

Secondly, we address the cases where $\tr(A)\in \oD_0(1/2)\cup \oD_2(1/2)$.
If $\abs{\tr(A)-2}<1/2$ we must have $N(a)=N(d)=1$ (so $D=I$) and then, using \eqref{eq:il-faut-retourner},
\[
\abs{1-a-d} \geq 1- \abs{a+d-2} =  1- \abs{a+d-N(a)-N(d)}. 
\]
On the other hand, if $\abs{\tr(A)}<1/2$, we must have $N(a)=N(d)=0$ (so $D=0$) and then
\[
\abs{1-a-d} \geq 1- \abs{a+d} = 1- \abs{a+d-N(a)-N(d)}.
\]
Thus, in both of these cases, by using \eqref{eq:il-faut-retourner} we obtain the inequality
\begin{equation}\label{eq:BREL}
\left(\abs{1-a-d}\right)^{-1} \leq \left(1- \abs{a+d-N(a)-N(d)}\right)^{-1} \leq \left(1-\rho(\veps)\veps\sqrt{2}\right)^{-1} =\kp(\veps).
\tag{$**$}
\end{equation}
Put
\[ D= \twomat{N(a)}{0}{0}{N(d)}; \]
this matrix equals either $I$ or $0$, depending on whether $\tr(A)$ is close to $0$ or close to $2$. It follows from \eqref{eq:BREL} that
\[ \begin{aligned}
\hsnorm{A-D}^2
 & = \abs{a-N(a)}^2  + \abs{d-N(d)}^2 + \abs{b}^2\\
 & \leq \rho(\veps)^2 \abs{a-a^2}^2 + \rho(\veps)^2 \abs{d-d^2}^2 + \kp(\veps)^2\abs{b(1-a-d)}^2 \\
 & \leq \kp(\veps)^2( \abs{a-a^2}^2 + \abs{d-d^2}^2 + \abs{b(1-a-d)}^2),
\end{aligned}  \]
where we used the result from part (a) that $\rho(\veps)\leq \kp(\veps)$.
Taking square roots gives $\hsnorm{A-D} \leq \kp(\veps) \hsnorm{A-A^2} \leq \kp(\veps)\veps$,
and the proof of our lemma is complete.
\end{proof}

\begin{figure}[hptb]
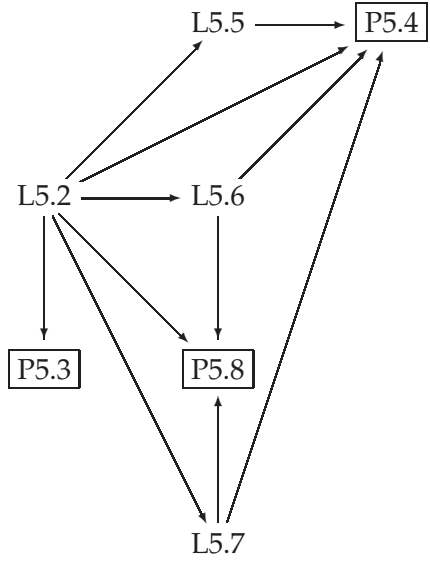

\begin{center}

\end{center}
\begin{diagram}
& & \hbox{L\ref{l:separated}} &  \rTo &  \fbox{P\ref{p:separate-S1}}   \\
 & \ruTo &  &  \ruTo(4,2) \ruTo \ruTo(2,6)  \\
\hbox{L\ref{l:2by2-key}}  & \rTo & \hbox{L\ref{l:S2-an-upset}} &   &   \\
 \dTo & \rdTo(2,4)\rdTo & \dTo &   &\\
\fbox{P\ref{p:basics}} & & \fbox{P\ref{p:S2-times-S1}} & &  \\
 & & \uTo &  & \\
 & & \hbox{L\ref{l:chains-in-S1}} & &
\end{diagram}
\caption{Dependencies between results in this section. (P = Proposition; L = Lemma.)}
\end{figure}

\subsection{The proof of Theorem~\ref{t:slatt-M2-amnm}}
To save needless repetition, we will assume for the rest of this section that $0\leq\delta < 0.03$.
This implies from the outset (by Lemma~\ref{l:2by2-key}(a)) that
\[ \rho(\delta) \leq \kp(\delta)  < \kp\left(\frac{29}{30^2}\right) < \left(1-\frac{1}{21}\right)^{-1} = 1.05. \]

Let $\theta:S\to\Mat{2}$ satisfy the condition~\eqref{eq:HS-delta-mult}.
To simplify formulas, we shall use the following abbreviations: $\te$ stands for $\theta(e)$, $\tf$ for $\theta(f)$, $\tef$ for $\theta(ef)$, and so forth.

We start by taking $e=f$ in condition~\eqref{eq:HS-delta-mult} and using Lemma~\ref{l:2by2-key} with $\veps=\delta$. This gives
\begin{equation}\label{eq:trace-localized}
\begin{aligned}
\tr\theta(S)
 \subseteq \bigcup_{j\in\{0,1,2\}} \clD_j(\rho(\delta)\sqrt{2}\delta) 
 \subseteq \bigcup_{j\in\{0,1,2\}} \oD_j(0.05) 
\end{aligned}
\end{equation}
with the second inclusion following from the upper bound
\[ \rho(\delta)\sqrt{2}\delta 
<  1.05 \times \frac{10}{7} \times 0.03 < 0.05 .\]

For $k=0,1,2$, define
\begin{equation}\label{eq:attracted}
 S_k\defeq
 (\tr\circ\theta)^{-1}(\oD_k(0.95)) 
= (\tr\circ\theta)^{-1}\left(\clD_k(\rho(\delta)\sqrt{2}\delta)\right).
\end{equation}
Then, by Lemma~\ref{l:2by2-key}(b),
$\theta(S_2) \subseteq \clB_I(\kp(\delta)\delta)$
and
$\theta(S_0) \subseteq  \clB_0(\kp(\delta)\delta)$.

Define $\phi:S_2\sqcup S_0 \to \Mat{2}$ by
$\phi(S_0) = \{0\}$ and $\phi(S_2) = \{I\}$.  
By our initial remarks concerning $\kp(\delta)$,
\begin{equation}
\label{eq:easy-extremes}
\sup_{x\in S_2\sqcup S_0} \hsnorm{\tx-\phi(x)}\leq \kp(\delta)\delta \leq 1.05\delta.
\end{equation}

\begin{prop}[Some easy properties]\label{p:basics}
Let $e,f\in S$.
\begin{YCnum}
\item\label{li:inverse-bound}
If $f\in S_2$ then $\hsnorm{\tf^{-1}} < 1.5$. If $e\in S_0$ then $\hsnorm{(I-\te)^{-1}} < 1.5$.
\item\label{li:S2-times-S2}
 If $e,f\in S_2$ then $ef\in S_2$.
\item\label{li:S0-is-ideal}
If $e\in S_0$ and $f\in S$ then $ef\in S_0$.
\end{YCnum}
In particular, $S_2\sqcup S_0$ is a subsemigroup of $S$, and $\phi:S_2\sqcup S_0\to\Mat{2}$ is multiplicative.
\end{prop}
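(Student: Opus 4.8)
The common mechanism behind all three assertions is the interplay, on $\Mat{2}$, between $\hsnorm{\cdot}$ and $\opnorm{\cdot}$: one has $\opnorm{A}\le\hsnorm{A}\le\sqrt2\,\opnorm{A}$ and $\hsnorm{AB}\le\hsnorm{A}\opnorm{B}$, while $\abs{\tr(A)}\le\sqrt2\,\hsnorm{A}$. Abbreviating $\eta\defeq\kp(\delta)\delta$ (so that $\eta<1.05\delta<0.0315$), the facts already recorded via Lemma~\ref{l:2by2-key}(b) and \eqref{eq:easy-extremes} say that $\hsnorm{\tf-I}\le\eta$ for $f\in S_2$ and $\hsnorm{\te}\le\eta$ for $e\in S_0$. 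I will also feed various semilattice identities ($e\cdot e=e$, $e(ef)=ef$, $(ef)f=ef$, $ef=fe$) into the defect bound \eqref{eq:HS-delta-mult}.

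Part~(i) is a Neumann series estimate: for $f\in S_2$ we have $\opnorm{I-\tf}\le\hsnorm{I-\tf}\le\eta<1$, so $\tf$ is invertible with $\opnorm{\tf^{-1}}\le(1-\eta)^{-1}$ and hence $\hsnorm{\tf^{-1}}\le\sqrt2\,(1-\eta)^{-1}<1.5$; the bound for $(I-\te)^{-1}$ when $e\in S_0$ is identical. For part~(ii), writing $\te\tf-I=(\te-I)+(\tf-I)+(\te-I)(\tf-I)$ gives $\hsnorm{\te\tf-I}\le 2\eta+\eta^2$, so $\hsnorm{\tef-I}\le\delta+2\eta+\eta^2$; this is well below $1$, and since $\abs{\tr(\tef)-2}\le\sqrt2\,\hsnorm{\tef-I}$ stays comfortably under $0.95$, the definition \eqref{eq:attracted} forces $ef\in S_2$.

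Part~(iii) is the one place requiring a little care: at this stage we have no norm control whatsoever on $\theta(f)$ for an arbitrary $f\in S$ (the uniform bound only emerges later, in Proposition~\ref{p:separate-S1}), so the naive argument ``$\te\tf\approx0$ since $\te\approx0$'' is not available. The remedy is to use $ef$ in place of $f$: from $e(ef)=ef$ and \eqref{eq:HS-delta-mult} we get $\hsnorm{\te\,\tef-\tef}\le\delta$, hence
\[ \hsnorm{\tef}\le\hsnorm{\te\,\tef}+\delta\le\hsnorm{\te}\opnorm{\tef}+\delta\le\eta\,\hsnorm{\tef}+\delta, \]
which, as $\eta<1$, self-improves to $\hsnorm{\tef}\le\delta/(1-\eta)<0.031$. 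Then $\abs{\tr(\tef)}\le\sqrt2\,\hsnorm{\tef}<0.95$, so $ef\in S_0$ by \eqref{eq:attracted}.

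For the final assertion, note that $S_2$ and $S_0$ are disjoint (as the discs $\oD_2(0.95)$ and $\oD_0(0.95)$ are), and since $S$ is commutative, (ii) and (iii) show that a product of two elements of $S_2\sqcup S_0$ lies in $S_2$ if both factors do and in $S_0$ if either factor does; in particular $S_2\sqcup S_0$ is a subsemigroup. Then $\phi(ef)=\phi(e)\phi(f)$ is immediate by cases: if $e,f\in S_2$ both sides equal $I$, while if either factor lies in $S_0$ both sides equal $0$. The only genuine ``obstacle'' is bookkeeping the constants in (ii) and (iii) so that they land well inside the radius-$0.95$ windows, and the hypothesis $\delta<0.03$ is chosen precisely to make that margin generous.
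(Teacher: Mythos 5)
Your proof is correct, and it actually takes a somewhat different route from the paper's in parts~(ii) and~(iii). The paper threads all three parts through the inverse bounds of part~(i) and the trace Cauchy--Schwarz inequality $\abs{\tr(AB)}\le\hsnorm{A}\hsnorm{B}$: for~(ii) it writes $(\tef-\te)\tf$, bounds its HS norm by $2\delta$ using the identities $(ef)f=ef$ and $ef=ef$, and then pairs with $\tf^{-1}$ to control $\abs{\tr(\tef-\te)}$; for~(iii) it pairs $(\te-I)\tef=\te\tef-\tef$ with $(\te-I)^{-1}$. You instead use the cruder but sufficient bound $\abs{\tr A}\le\sqrt2\,\hsnorm{A}$, which lets you handle~(ii) by a direct expansion of $\te\tf-I$ without any inverse at all, and~(iii) by the self-improving estimate $\hsnorm{\tef}\le\eta\hsnorm{\tef}+\delta$ rather than an explicit Neumann sum. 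The gain is a cleaner logical structure (your~(ii) is independent of~(i)); the small cost is a factor of $\sqrt2$ in the trace estimate, which the margin in $\delta<0.03$ easily absorbs. Crucially, you correctly identified the one genuine subtlety, namely that part~(iii) must be bootstrapped through $\theta(ef)$ because no a~priori bound on $\hsnorm{\theta(f)}$ is available yet --- this is exactly the same observation the paper relies on.
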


\begin{proof}
Suppose $f\in S_2$. As remarked above, we have $\hsnorm{\tf-I} \leq \kp(\delta)\delta \leq 1.05\delta < 0.04$. Therefore, using submultiplicativity of the Hilbert--Schmidt norm,
\begin{equation}\label{eq:inverse-bound}
\begin{aligned}
\hsnorm{\tf^{-1}}
 & = \hsnorm{ \sum_{n=0}^\infty (I-\tf)^n } \\
 & \leq \hsnorm{I} + \sum_{n=1}^\infty  \hsnorm{I-\tf}^n  \\
 & \leq \sqrt{2} -1 +  \frac{1}{1-\hsnorm{I-\tf}}
 & < \frac{3}{7} + \frac{1}{0.96}   < 1.5.
\end{aligned}
\end{equation}
An exactly similar argument shows that when $e\in S_0$, we have $\hsnorm{(I-\te)^{-1}} < 1.5$.
This proves part~\ref{li:inverse-bound}.

Now, for any $e,f\in S$,
\[ \hsnorm{(\tef-\te)\tf} \leq \hsnorm{\tef\tf-\tef}+\hsnorm{\tef-\te\tf} \leq 2\delta.\]
If $e,f\in S_2$, we may combine this upper bound with part~\ref{li:inverse-bound} to obtain (via Cauchy--Schwarz)
\[ \begin{aligned}
 \abs{\tr(\tef)-2}
 & \leq \abs{\tr(\te)-2} + \abs{\tr(\tef-\te)} \\
 & \leq 0.05 + \hsnorm{(\tef-\te)\tf}\hsnorm{\tf^{-1}} \leq 0.05+ 3\delta \ll 0.95,
\end{aligned}
\]
which implies $ef\in S_2$, by the definition in~\eqref{eq:attracted}. This proves~\ref{li:S2-times-S2}.

Observe that $\hsnorm{\te\ \tef-\tef} \leq \delta$. If $e\in S_0$, then by part~\ref{li:inverse-bound}
$\hsnorm{(\te -I)^{-1}} < 1.5$,
and so by Cauchy-Schwarz,
\[ \abs{ \tr \tef } \leq \hsnorm{ (\te-I)^{-1} }\ 
 \hsnorm{(\te-I)\tef} \leq 1.5\delta \ll 0.5. \]
Hence $ef\in S_0$, by \eqref{eq:attracted}, proving \ref{li:S0-is-ideal}. The final statement of the proposition now follows easily from~\ref{li:S2-times-S2} and~\ref{li:S0-is-ideal}.
\end{proof}

Combining \eqref{eq:easy-extremes} and Proposition~\ref{p:basics}, we get a proof of Theorem~\ref{t:slatt-M2-amnm} in the special case where $S_1=\emptyset$. \emph{We shall therefore assume, for the rest of this section, that $S_1$ is non-empty.}

\begin{prop}\label{p:separate-S1}
There exists a rank $1$-idempotent $P\in\Mat{2}$ such that
\[ \theta(S_1) \subseteq \clB_P(12\delta) \sqcup \clB_{I-P}(12\delta). \]
Moreover, given $e,f\in S_1$:
\begin{itemize}
\item[--] if $\te,\tf$ both lie in $\clB_P(12\delta)$, then so does $\tef$;
\item[--] if $\te,\tf$ both lie in $\clB_{I-P}(12\delta)$, then so does $\tef$;
\item[--] if $\te\in\clB_P(12\delta)$ and $\tf\in\clB_{I-P}(12\delta)$, then $ef\in S_0$.
\end{itemize}
\end{prop}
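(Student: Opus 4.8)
The plan is to transplant, from $\Cplx$ to $\Mat 2$, the three-step scheme recalled in the Motivating subsection. Since $S_1\neq\emptyset$, fix some $e_0\in S_1$; because $\tr\theta(e_0)\in\oD_1(0.05)$ by the definition~\eqref{eq:attracted} of $S_1$, Lemma~\ref{l:2by2-key}(b) applied to $\theta(e_0)$ with $\veps=\delta$ produces a rank-one idempotent, which we name $P$, with $\hsnorm{\theta(e_0)-P}\le\rho(\delta)\delta\le 1.05\delta$. The only rank-one idempotents commuting with $P$ are $P$ and $I-P$ (as recalled before Remark~\ref{r:harbinger}), and a one-line Hilbert--Schmidt computation gives $\hsnorm{R-R'}\ge 1\gg 12\delta$ for any two distinct members of $\{0,P,I-P,I\}$; hence the four balls $\clB_0(12\delta)$, $\clB_P(12\delta)$, $\clB_{I-P}(12\delta)$, $\clB_I(12\delta)$ are pairwise disjoint, and it makes sense to speak of ``the side'' of $\{P,I-P\}$ on which a given element of $S_1$ sits. (This separatedness is the content of a lemma such as Lemma~\ref{l:separated}.)

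The first task is to prove $\theta(S_1)\subseteq\clB_P(12\delta)\sqcup\clB_{I-P}(12\delta)$. Given $f\in S_1$, Lemma~\ref{l:2by2-key}(b) supplies a rank-one idempotent $Q$ with $\hsnorm{\theta(f)-Q}\le\rho(\delta)\delta$, so it is enough to place $Q$ within $10\delta$ of $P$ or of $I-P$. The key point is that, $S$ being a semilattice, $e_0(e_0f)=e_0f=(e_0f)e_0$ and $f(e_0f)=e_0f=(e_0f)f$; hence by \eqref{eq:HS-delta-mult} the matrix $\theta(e_0f)$ is fixed to within $\delta$ under left and right multiplication by each of $\theta(e_0)$ and $\theta(f)$, and these bounds involve \emph{no} appearance of $\hsnorm{\theta(e_0)}$, $\hsnorm{\theta(f)}$ or $\hsnorm P$. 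Passing to a unitary basis in which $P=\twomat1c00$ is upper triangular, I would feed these norm-free ``approximate fixed point'' relations, together with $Q=Q^2$ and $\tr Q\in\oD_1(0.05)$, into the four entries of $Q$ and deduce that $Q$ lies within $10\delta$ of one of the configurations $\twomat1c00$, $\twomat0{-c}01$, i.e.\ of $P$ or $I-P$. \textbf{This entry analysis is the step I expect to be the main obstacle}: there is no uniform bound on any of the matrices in play --- $\hsnorm{\theta(e)}$ may be arbitrarily large for $e\in S_1$, so crude estimates such as $\hsnorm{\theta(e_0)\theta(f)-PQ}\le\rho(\delta)\delta\bigl(\hsnorm Q+\hsnorm{\theta(e_0)}\bigr)$ are worthless --- and one must instead route everything through the norm-free semilattice identities above (and, when $c\neq 0$, exploit that $\hsnorm\cdot$-smallness pins down the small entries of a rank-one idempotent much more tightly than its large entries). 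Disentangling this, and keeping track of the constants, is presumably what the auxiliary results Lemma~\ref{l:separated}, Lemma~\ref{l:S2-an-upset} and Lemma~\ref{l:chains-in-S1} are for.

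For the three product assertions, fix $e,f\in S_1$. Since $ef\preceq e$ and $e\notin S_2$, the fact that $S_2$ is an up-set (Lemma~\ref{l:S2-an-upset}) forbids $ef\in S_2$, so $ef\in S_0\sqcup S_1$. In the case $ef\in S_1$ one has $\theta(ef)\in\clB_R(\rho(\delta)\delta)$ for some rank-one idempotent $R$, and rerunning the entry analysis of the previous paragraph --- now using $e(ef)=ef=(ef)e$ and $f(ef)=ef=(ef)f$, together with a control of $\theta$ along the chains $e\succeq ef$ and $f\succeq ef$ in $S_1$ (Lemma~\ref{l:chains-in-S1}, Proposition~\ref{p:S2-times-S1}) --- forces $R$, and hence $\theta(ef)$, onto the same side as both $\te$ and $\tf$. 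Consequently: if $\te$ and $\tf$ are on opposite sides of $\{P,I-P\}$, then $ef\in S_1$ is impossible, so $ef\in S_0$; and if $\te,\tf$ are on the same side, then $ef\in S_1$ and $\theta(ef)$ lies on that same side. This proves Proposition~\ref{p:separate-S1}. Finally, combining it with Proposition~\ref{p:basics} and \eqref{eq:easy-extremes} yields Theorem~\ref{t:slatt-M2-amnm}: one defines $\phi$ on $S=S_0\sqcup S_1\sqcup S_2$ by sending each $x$ to the nearest point of $\{0,P,I-P,I\}$ (well defined by the separatedness), checks multiplicativity from the case analysis in Proposition~\ref{p:basics} and the product rules just described, and reads off $\sup_{x\in S}\hsnorm{\theta(x)-\phi(x)}\le 12\delta$; the generous constant $12$ is exactly what absorbs the losses accumulated along the way.
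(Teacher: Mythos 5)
Your proposal is not a proof but a sketch with a named hole: you yourself flag the ``entry analysis'' as ``the step I expect to be the main obstacle,'' and you do not carry it out. That step is exactly where the content of the proposition lives, so what you have written does not establish the first (and hardest) assertion, namely $\theta(S_1)\subseteq\clB_P(12\delta)\sqcup\clB_{I-P}(12\delta)$.

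The paper's actual argument sidesteps any coordinate-by-coordinate analysis. It introduces the relation $e\sim f\iff ef\in S_1$ on $S_1$ and shows it is an equivalence relation with at most two classes, using only two norm-free dichotomies: Lemma~\ref{l:chains-in-S1} gives $\hsnorm{\te-\tf}\le 10\delta$ when $e\sim f$ (both $\te,\tf$ are within $5\delta$ of $\tef$), while the separation lemma (Lemma~\ref{l:separated}) gives $\hsnorm{\te+\tf-I}\le 10\delta$ and hence $\hsnorm{\te-\tf}>1$ when $e\not\sim f$. Since $20\delta<1$ these two regimes cannot overlap, which yields transitivity and, by the same estimates, the at-most-two-classes claim. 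Fixing a base point $p_0$, the image of $[p_0]$ sits inside $\clB_{\thp_0}(10\delta)$ and the image of $S_1\setminus[p_0]$ inside $\clB_{I-\thp_0}(10\delta)$; replacing $\thp_0$ by a genuine rank-one idempotent $P$ costs only $1.05\delta$, giving the radius $12\delta$. The three product bullets are then just bookkeeping with the equivalence classes. This is a fundamentally different route from the upper-triangular entry analysis you propose: it never needs a bound on $\hsnorm{\theta(e)}$ for $e\in S_1$, and the ``norm-free semilattice identities'' you mention enter only through the two lemmas above, not through fixed-point relations for $\theta(e_0f)$.

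Two further concrete problems with your outline. First, your claim that $\hsnorm{R-R'}\ge 1\gg 12\delta$ for distinct $R,R'\in\{0,P,I-P,I\}$ is fine as a fact, but it is not ``the content of a lemma such as Lemma~\ref{l:separated}''; that lemma is a statement about pairs $e,f\in S_1$ with $ef\in S_0$, not about the abstract idempotents, and conflating the two obscures where the actual work is. Second, you invoke Proposition~\ref{p:S2-times-S1} inside your argument for the product assertions, but in the paper that proposition is proved \emph{after} and \emph{using} Proposition~\ref{p:separate-S1} (the idempotent $P$ is only fixed once Proposition~\ref{p:separate-S1} is in hand), so your citation is circular as stated.
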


The proof of Proposition~\ref{p:separate-S1} requires some work, which we break up into several lemmas.
The first of these is an ``approximate version'' of the following observation:

\medskip
if $P$ and $Q$ are rank-one idempotents in $\Mat{2}$ with $PQ=0=QP$, then $P+Q=I$.
\medskip

\noindent
This is also the first place where it is really necessary to make $\delta$ no bigger than about $0.03$, as we need to apply Lemma~\ref{l:2by2-key} to something which is ``approximately idempotent to within roughly $7\delta$''.

\begin{lem}[Separation lemma]\label{l:separated}
Let $e,f\in S_1$ with $ef\in S_0$.
 Then
\[ \hsnorm{\te+\tf-I} \leq 10\delta \quad\text{and}\quad 
\hsnorm{\te-\tf} \geq \frac{4}{3} - 10\delta > 1.\]
\end{lem}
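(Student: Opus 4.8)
The plan is to run an approximate version of the elementary fact that two rank-one idempotents $P,Q\in\Mat{2}$ with $PQ=QP=0$ must satisfy $P+Q=I$ (because then $-(P+Q-I)$ is an idempotent of trace $0$, hence zero). The subtlety is that the idempotents close to $\te$ and $\tf$ furnished by Lemma~\ref{l:2by2-key}(b) need not have controlled Hilbert--Schmidt norm, so rather than perturbing from them I would work directly with $N\defeq\te+\tf-I$ and observe that $-N$ is itself ``approximately idempotent''. The starting point is the purely algebraic identity
\[ N+N^2=(\te^2-\te)+(\tf^2-\tf)+\te\tf+\tf\te, \]
valid for any $\te,\tf\in\Mat{2}$. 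Taking $e=f$ in \eqref{eq:HS-delta-mult} gives $\hsnorm{\te^2-\te}\le\delta$ and $\hsnorm{\tf^2-\tf}\le\delta$; and since $ef=fe\in S_0$ with $\theta(S_0)\subseteq\clB_0(\kp(\delta)\delta)$, two further applications of \eqref{eq:HS-delta-mult} give $\hsnorm{\te\tf}\le(1+\kp(\delta))\delta$ and $\hsnorm{\tf\te}\le(1+\kp(\delta))\delta$. Hence $\hsnorm{(-N)-(-N)^2}=\hsnorm{N+N^2}\le(4+2\kp(\delta))\delta\le 6.1\delta$, using the running bound $\kp(\delta)\le 1.05$.

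Next I would feed this into Lemma~\ref{l:2by2-key}(b), applied to $A=-N$ with $\veps=6.1\delta$: this is legitimate since $6.1\delta<6.1\cdot 0.03<2/9$ and $\hsnorm{A-A^2}\le\veps$ holds. Because $e,f\in S_1$, combining \eqref{eq:trace-localized} and \eqref{eq:attracted} shows $\abs{\tr\te-1}<0.05$ and $\abs{\tr\tf-1}<0.05$, so $\abs{\tr(-N)}=\abs{(1-\tr\te)+(1-\tr\tf)}<0.1<1/2$. The relevant clause of Lemma~\ref{l:2by2-key}(b) then yields $\hsnorm{N}=\hsnorm{-N}\le\kp(6.1\delta)\,6.1\delta$; since $\kp$ is increasing and $6.1\delta<3/16$, the estimate \eqref{eq:bound_regulator} with $n=3$ gives $\kp(6.1\delta)<(1-\sqrt{2}/4)^{-1}<14/9$, so $\hsnorm{N}<(14/9)(6.1\delta)<10\delta$. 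This proves the first inequality.

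For the second inequality I would write $\te-\tf=(2\te-I)-N$, so $\hsnorm{\te-\tf}\ge\hsnorm{2\te-I}-\hsnorm{N}$. Since $\hsnorm{\te-\te^2}\le\delta<2/9$, the bound \eqref{eq:2A-I} gives $\hsnorm{2\te-I}\ge(2-6\delta)^{1/2}$, and $\delta<0.03<1/27$ forces $2-6\delta>16/9$, i.e.\ $(2-6\delta)^{1/2}>\frac{4}{3}$. Combining with $\hsnorm{N}\le 10\delta$ gives $\hsnorm{\te-\tf}>\frac{4}{3}-10\delta$, and then $\frac{4}{3}-10\delta>\frac{4}{3}-0.3>1$, as required.

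The genuinely fiddly part is the bookkeeping of constants in the first step: one needs $(4+2\kp(\delta))\delta$ to stay comfortably below $2/9$ so that Lemma~\ref{l:2by2-key}(b) applies to $-N$, and one needs $\kp$ evaluated at that point to be small enough that the resulting constant does not exceed $10$. This is precisely where the blanket hypothesis $\delta<0.03$ earns its keep, leaving a little slack in every inequality; there is no conceptual difficulty once the identity for $N+N^2$ and the observation that $\tr(-N)\approx 0$ are in hand.
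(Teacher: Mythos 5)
Your proof is correct and is essentially the paper's own argument: the paper sets $B=\te+\tf$ and shows $\hsnorm{B^2-B}\le(4+2\kp(\delta))\delta\le 6.1\delta$ with $\tr B$ near $2$, then applies Lemma~\ref{l:2by2-key}(b) to conclude $\hsnorm{B-I}\le 10\delta$, and finishes with $\hsnorm{2\te-I}\ge(2-6\delta)^{1/2}>4/3$ exactly as you do. Your $N=B-I$ is just a change of variable (indeed $N+N^2=B^2-B$), so the computations and numerical bounds coincide step for step.
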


\begin{proof}
Let $B= \te+\tf$. We wish to prove $B$ is close to $I$; this will follow if we can show $B$ is approximately idempotent and has trace close to~$2$.

Since $ef\in S_0$, applying Lemma~\ref{l:2by2-key}(b) to the matrix $\tef$, with $\veps=\delta$, yields
$\hsnorm{\tef} \leq \kp(\delta)\delta$.
Then, since 
\[ \hsnorm{\te\tf+\tf\te} \leq \hsnorm{\te\tf-\tef}+\hsnorm{\tf\te-\tef} + 2\hsnorm{\tef} \leq 2\delta+2\kp(\delta)\delta,\]
we obtain
\[ \hsnorm{B^2-B} \leq \hsnorm{\te^2-\te} + \hsnorm{\tf^2-\tf} + \hsnorm{\te\tf+\tf\te} \leq (4+2\kp(\delta))\delta. \]

As $(4+2\kp(\delta))\delta \leq 6.1\delta < 0.183 < 2/9$,
we may apply Lemma~\ref{l:2by2-key}(b) to the  matrix $B$.
By our earlier observation \eqref{eq:trace-localized}, 
\[ \abs{\tr(B)-2} \leq \abs{\tr(\te)-1}+\abs{\tr(\tf-1)} < 0.05+0.05 < \frac{1}{2}, \]
and so (by Lemma~\ref{l:2by2-key}(b)), we have
\begin{equation}\label{eq:muppet}
\hsnorm{B-I}\leq \kp(6.1\delta)6.1\delta
\leq \kp\left(\frac{3}{4^2}\right)6.1\delta
\leq \left( 1- \frac{5}{14}\right)^{-1} 6.1\delta  \leq 10\delta .
\tag{$*$}
\end{equation}

For the second part, we apply the estimate \eqref{eq:2A-I} to obtain
\[ \hsnorm{2\te-I }\geq (2-6\delta)^{1/2} \geq \sqrt{1.82} > \frac{4}{3}.\]
Combining this with \eqref{eq:muppet} yields
\[
\hsnorm{\te-\tf} = \hsnorm{(2\te-I) - (B-I)}
  \geq \frac{4}{3} - 10\delta  > 1,
\]
as required.
\end{proof}

Intuitively, we should have $S_1\cdot S_1\subseteq (S_1\cup S_0)$ since elements of $\theta(S_k)$ are close to idempotents of rank $k$. Some care is needed to show this, because we have no {\it a~priori}\/ upper bound on norms of elements in $\theta(S_1)$.

\begin{lem}\label{l:S2-an-upset}\
\begin{YCnum}
\item If $e\in S$, $f\in S_2$ and $e\succeq f$, then $e\in S_2$.
\item $S_1\cdot S_1 \subseteq S_1\cup S_0$.
\end{YCnum}
\end{lem}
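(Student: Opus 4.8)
The plan is to read both parts off Proposition~\ref{p:basics} and the trace localization~\eqref{eq:trace-localized}, with no new estimates needed.

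For part~(i), I would argue as follows. Since $e\succeq f$ in the semilattice $S$ we have $ef=f$, so~\eqref{eq:HS-delta-mult} gives $\hsnorm{(\te-I)\tf}=\hsnorm{\te\tf-\tef}\le\delta$. As $f\in S_2$, Proposition~\ref{p:basics}\ref{li:inverse-bound} tells us $\tf$ is invertible with $\hsnorm{\tf^{-1}}<1.5$, hence $\hsnorm{\te-I}\le\hsnorm{(\te-I)\tf}\,\hsnorm{\tf^{-1}}<1.5\delta$. Cauchy--Schwarz then yields $\abs{\tr(\te)-2}=\abs{\tr(\te-I)}\le\sqrt2\,\hsnorm{\te-I}<1.5\sqrt2\,\delta$, which is far below $0.95$ because $\delta<0.03$; so $\tr(\te)\in\oD_2(0.95)$, i.e.\ $e\in S_2$ by~\eqref{eq:attracted}.

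For part~(ii), I would deduce $S_1\cdot S_1\subseteq S_1\cup S_0$ directly from part~(i). Given $e,f\in S_1$, idempotency and commutativity give $e(ef)=ef$, that is $e\succeq ef$. If $ef$ were in $S_2$, then part~(i) (with $ef$ playing the role of $f$) would force $e\in S_2$; but the sets $S_0,S_1,S_2$ are pairwise disjoint --- they partition $S$, by~\eqref{eq:trace-localized} --- contradicting $e\in S_1$. Hence $ef\notin S_2$, i.e.\ $ef\in S_1\cup S_0$.

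I do not expect a genuine obstacle in this lemma; the only points requiring care are the elementary semilattice identities ($ef=f$ when $e\succeq f$, and $e\succeq ef$ for all $e,f$) and recording that $S_1$ and $S_2$ are disjoint.
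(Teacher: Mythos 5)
Your proof is correct and follows essentially the same route as the paper's: reduce part (i) to the bound $\hsnorm{\te\tf-\tf}\le\delta$, invoke $\hsnorm{\tf^{-1}}<1.5$ from Proposition~\ref{p:basics}\ref{li:inverse-bound}, and conclude via Cauchy--Schwarz that $\tr(\te)$ lies near $2$; then part (ii) is immediate from (i) because $e\succeq ef$. The only cosmetic difference is that you first bound $\hsnorm{\te-I}$ by submultiplicativity and then apply $\abs{\tr(M)}\le\sqrt{2}\hsnorm{M}$, whereas the paper bounds $\abs{\tr(\te-I)}=\abs{\tr((\te-I)\tf\cdot\tf^{-1})}\le\hsnorm{(\te-I)\tf}\,\hsnorm{\tf^{-1}}$ directly (giving $1.5\delta$ rather than your $1.5\sqrt{2}\,\delta$, but both are far below $0.5$).
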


\begin{proof}
Let $e,f\in S$ with $ef=f$. Then
$\hsnorm{\te\tf-\tf}\leq \delta$. If $f\in S_2$, then just as in the proof of Proposition~\ref{p:basics}, we have $\hsnorm{\tf^{-1}} < 1.5$. 
Hence, by Cauchy--Schwarz,
\[ \abs{ \tr(\te-I) } \leq \hsnorm{\te\tf-\tf}
 \hsnorm{\tf^{-1}} \leq 1.5\delta \ll \frac{1}{2}, \]
forcing $e$ to lie in $S_2$. This proves~(i).

Now if $e,g\in S_1$, put $f=eg$; by part~(i), $f\notin S_2$, and (ii) is proved.
\end{proof}

To analyze $S_1$ in further detail, the following lemma is useful.

\begin{lem}[Chains in $S_1$]
\label{l:chains-in-S1}
Let $e,f\in S_1$ with $e\succeq f$.
Then $\hsnorm{\te-\tf}\leq 5\delta$.
\end{lem}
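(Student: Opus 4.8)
The plan is to work directly with the difference $A\defeq\te-\tf$, rather than with the rank-one idempotents lying near $\te$ and $\tf$. Those idempotents are controlled only up to an additive $\rho(\delta)\delta$, and since \eqref{eq:HS-delta-mult} provides no a~priori bound on $\hsnorm{\te}$ or $\hsnorm{\tf}$, products such as $P\tf$ would be clumsy to estimate. By contrast, the hypothesis $e\succeq f$, i.e.\ $ef=fe=f$ (the semilattice being commutative), means that \eqref{eq:HS-delta-mult} directly controls exactly the \emph{products} we shall need, and the exact statement behind the lemma --- two commuting idempotents of equal rank in $\Mat{2}$ coincide --- suggests that $A$ should be approximately idempotent with trace near $0$.

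First I would record the four estimates that follow from \eqref{eq:HS-delta-mult} together with $ee=e$, $ff=f$ and $ef=fe=f$: namely $\hsnorm{\te^2-\te}\le\delta$, $\hsnorm{\tf^2-\tf}\le\delta$, $\hsnorm{\te\tf-\tf}\le\delta$ and $\hsnorm{\tf\te-\tf}\le\delta$. The heart of the argument is then the algebraic identity
\[ (\te-\tf)^2-(\te-\tf)=(\te^2-\te)+(\tf^2-\tf)+(\tf-\te\tf)+(\tf-\tf\te), \]
which gives $\hsnorm{A-A^2}=\hsnorm{A^2-A}\le 4\delta$. Since $\delta<0.03$ we have $4\delta<0.12<2/9$, so Lemma~\ref{l:2by2-key}(b) applies to $A$ with $\veps=4\delta$. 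Moreover $e,f\in S_1$, so by \eqref{eq:attracted} and \eqref{eq:trace-localized} we have $\abs{\tr\te-1}<0.05$ and $\abs{\tr\tf-1}<0.05$, whence $\abs{\tr A}<\tfrac1{10}<\tfrac12$; we are therefore in the case ``$\abs{\tr(A)}<1/2$'' of Lemma~\ref{l:2by2-key}(b), which yields $\hsnorm{\te-\tf}=\hsnorm{A}\le\kp(4\delta)\,4\delta$.

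It only remains to control the constant. Since $\kp$ is increasing, $\kp(4\delta)\le\kp(0.12)$, and a direct evaluation (using $\sqrt{0.52}>0.72$, so that $\rho(0.12)\cdot 0.12\cdot\sqrt2<0.2$) gives $\kp(0.12)<1.25$; hence $\hsnorm{\te-\tf}<5\delta$, as required. I expect this last bit of bookkeeping to be the one delicate point: the inequality $\kp(4\delta)\,4\delta\le 5\delta$ is essentially tight as $\delta\uparrow 0.03$, so the expansion in the key identity must be organised to incur exactly four error terms of size $\delta$, with no slack to spare, and the estimate on $\kp$ near $0.12$ must be a genuine numerical one rather than a crude bound. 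Everything else is routine.
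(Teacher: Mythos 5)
Your argument is correct and is essentially identical to the paper's own proof: same reduction to $A=\te-\tf$, same algebraic expansion of $A^2-A$ into four error terms each bounded by $\delta$, same invocation of Lemma~\ref{l:2by2-key}(b) in the small-trace case, and the same numerical estimate $\kp(0.12)<1.25$ (the paper writes the bound as $\tfrac1{\sqrt2}-\sqrt{0.26}<0.2$, which is just your $\tfrac1{\sqrt2}(1-\sqrt{0.52})<0.2$ rearranged). Nothing to add.
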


\begin{proof}
Put $A=\te-\tf$. We show that $A$ is approximately idempotent and has small trace, so must be close to $0$ by Lemma~\ref{l:2by2-key}(b).
In detail: observe that
\[ \hsnorm{A^2-A} =\hsnorm{
{\te}^2-\te + {\tf}^2-\tf 
- \te\tf +\tf -\tf\te+\tf 
} \leq 4\delta < 0.12, \]
and $\tr(A) \leq \abs{1-\tr\te}+\abs{1-\tr\tf} \leq 0.05+0.05 \ll 0.5$.
Applying Lemma~\ref{l:2by2-key}(b) to the matrix $A$, we have
$\hsnorm{\te-\tf} \leq \kp(4\delta)4\delta \leq \kp(0.12)4\delta$.

Calculation shows that
\[ \begin{aligned}
1- \kp(0.12)^{-1}
 & = \sqrt{2}\rho(0.12)0.12  \\
 & = \frac{1}{\sqrt{2}}(1- \sqrt{1-0.48})  \\
 & = \frac{1}{\sqrt{2}} - \sqrt{0.26}   & < 0.2,
\end{aligned} \]
so that $4\kp(0.12) < 4(1-0.2)^{-1} = 5$.
The rest follows.
\end{proof}

\begin{proof}[Proof of Proposition~\ref{p:separate-S1}]
Recall that $S_1$ is, by assumption, non-empty.
Consider the relation on $S_1$ defined by $\{ (e,f) \in S_1\times S_1 \st ef\in S_1\}$, and denote it by $\sim$. Clearly $\sim$ is symmetric and reflexive.
We will see shortly that it is also transitive, as a consequence of the following two observations:
\begin{YCnum}
\item\label{li:equiv-implies-close-images}
If $e\sim f$ then $\hsnorm{\te-\tf}\leq 10\delta$.
(For if $e,f,ef\in S_1$ then Lemma~\ref{l:chains-in-S1} implies that $\hsnorm{\te-\tef}\leq 5\delta$ and $\hsnorm{\tf-\tef} \leq 5 \delta$.)
\item\label{li:vaguely-close-implies-equiv-preimages}
If $e,f\in S_1$ and $\hsnorm{\te-\tf}\leq 1$, then $e\sim f$.
(This is immediate from the contrapositive of the separation lemma (Lemma~\ref{l:separated}).)
\end{YCnum}
Therefore, since $20\delta < 1$, we see that $\sim$ is indeed transitive, and so $\sim$ is an equivalence relation on $S_1$.

Now let $e,f,g\in S_1$. Suppose $e\not\sim f$ and $f\not\sim g$.
 Then, since $ef\in S_0$ and $fg\in S_0$, the separation lemma (Lemma~\ref{l:separated}) implies that
 $\hsnorm{\te+\tf-I}\leq 10\delta$
and $\hsnorm{\tf+\tg-I}\leq 10\delta$.
 Hence
\[ \hsnorm{\te-\tg}
 \leq \hsnorm{\te+\tf-I} + \hsnorm{\tf+\tg-I}
 \leq 20 \delta \leq 0.6 ; \]
so by \ref{li:vaguely-close-implies-equiv-preimages}, $e\sim g$. Thus there are at most two equivalence classes for this relation.

Now, fix $p_0\in S_1$. By \ref{li:equiv-implies-close-images},
\[ \theta([p_0]) \subseteq \clB_{\thp_0}(10\delta), \]
where $[p_0]$ denotes the equivalence class of $p_0$ in~$S_1$.
Moreover, if $e\in S_1\setminus [p_0]$ then $ep_0\in S_0$ (by definition of $\sim$).
Hence, by the separation lemma (Lemma~\ref{l:separated}), $\hsnorm{\te+\thp_0-I} \leq 10\delta$, so that
\[ \theta(S_1\setminus [p_0]) \subseteq \clB_{I-\thp_0}(10\delta). \]

By Lemma~\ref{l:2by2-key}(b), there exists a rank-$1$ idempotent $P\in\Mat{2}$ such that $\hsnorm{P-\thp_0} \leq 1.05\delta$. Then
\[ \theta([p_0]) \subseteq \clB_P(10\delta+1.05\delta) \subseteq \clB_P(12\delta) \]
and
\[ \theta(S_1\setminus [p_0]) \subseteq \clB_{I-P}(10\delta+1.05\delta)
\subseteq \clB_{I-P}(12\delta) \]
Note that since $\hsnorm{2P-I} \geq \sqrt{2}$ (by Lemma~\ref{l:2by2-key}(b))
 and $24\delta \ll \sqrt{2}$, the sets $\theta([p_0])$ and $\theta(S_1\setminus [p_0])$ are disjoint.
Therefore,
\[
S_1 \cap \theta^{-1}\left( \clB_P(12\delta) \right) = [p_0]
\quad\text{and}\quad
S_1 \cap \theta^{-1}\left( \clB_{I-P}(12\delta) \right) = S_1\setminus [p_0]
\]

Finally, let $e,f\in S_1$.
\begin{itemize}
\item[--] If $\te,\tf\in \clB_P(12\delta)$ then $e\sim p_0\sim f$, so that $ef\sim p_0$ (as equivalence classes are closed under multiplication) and therefore $\tef\in \clB_{P}(12\delta)$.
\item[--] If $\te,\tf\in \clB_{I-P}(12\delta)$, then $e\not\sim p_0$, $f\not\sim p_0$; since there are at most two equivalence classes, $e\sim f$. Thus $e\sim ef\sim f$, so $ef\not\sim p_0$, so $\tef\in \clB_{I-P}(12\delta)$.
\item[--] If $\te\in \clB_P(12\delta)$ and $\tf\in \clB_{I-P}(12\delta)$, then $e\sim p_0$ and $p_0\not\sim f$. Thus $e\not\sim f$, so $ef\in S_0$.
\end{itemize}
This completes the proof of the proposition.
\end{proof}

\medskip
We now \emph{fix} an idempotent $P\in\Mat{2}$ that satisfies the conclusions of Proposition~\ref{p:separate-S1}.
The sets
\[ \clB_I(2\delta) \quad,\quad \clB_P(12\delta) \quad,\quad \clB_{I-P}(12\delta) \quad, \quad \clB_0(2\delta) \]
are pairwise disjoint, and their union contains $\theta(S)$. Recall that we have already defined $\phi:S_2\sqcup S_0 \to \Mat{2}$ which is multiplicative; now define $\phi:S_1\to \Mat{2}$ by setting $\phi(x)$ to be whichever of $P$ and $I-P$ is closer to $\tx$. Explicitly, the map $\phi:S\to\Mat{2}$ satisfies:
\[ \phi(x) = \left\{ \begin{aligned}
 I & \quad\text{if $x\in S_2$,} \\
 P & \quad\text{if $\tx\in \clB_P(12\delta)$,} \\
I-P & \quad\text{if $\tx\in \clB_{I-P}(12\delta)$,} \\
 0 & \quad\text{if $x\in S_0$.} \\
\end{aligned} \right. \]
By construction, $\hsnorm{\tx-\phi(x)}\leq 12\delta$ for all $x\in S$.

It remains to show $\phi$ is multiplicative.
Let 
\[ S_p\defeq \theta^{-1}(\clB_P(12\delta)) \quad\text{and}\quad
   S_q\defeq \theta^{-1}(\clB_{I-P}(12\delta)). \]
Then $S_1=S_p\sqcup S_q$, and it suffices to verify the following claims:

\begin{enumerate}
\item\label{li:2-2}
 $S_2\cdot S_2\subseteq S_2$.
\item\label{li:arb-0}
 $S\cdot S_0 \subseteq S_0$.
\item\label{li:2-p_2-q}
$S_2\cdot S_p\subseteq S_p$ and $S_2\cdot S_q\subseteq S_q$.
\item\label{li:p-p_q-q}
 $S_p\cdot S_p \subseteq S_p$ and $S_q\cdot S_q\subseteq S_q$.
\item\label{li:p-q}
 $S_p\cdot S_q \subseteq S_0$.
\end{enumerate}

Assertions~\ref{li:2-2} and~\ref{li:arb-0} follow from Proposition~\ref{p:basics}. Assertions \ref{li:p-p_q-q} and \ref{li:p-q} follow from Proposition~\ref{p:separate-S1}.
Assertion~\ref{li:2-p_2-q} requires some more work, and is dealt with in our final proposition.

\begin{prop}\label{p:S2-times-S1}
 Let $e\in S_2$. If $f\in S_p$, then so is $ef$; if $f\in S_q$, then so is $ef$.
\end{prop}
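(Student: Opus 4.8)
The plan is to pin down $\theta(ef)$ by showing that it differs from $\theta(f)$ by at most $3\delta$ in Hilbert--Schmidt norm, and then to read off from this estimate where $ef$ lies among the sets $S_2, S_p, S_q, S_0$. I shall treat the case $f\in S_p$; the case $f\in S_q$ is word-for-word the same with $P$ replaced by the rank-one idempotent $I-P$, since $\clB_P(12\delta)$ and $\clB_{I-P}(12\delta)$ play symmetric roles.

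The key step is the bound $\hsnorm{\theta(ef)-\theta(f)}\leq 3\delta$. Here one must avoid the naive route $\hsnorm{\theta(ef)-\theta(f)}\leq\delta+\hsnorm{(\theta(e)-I)\theta(f)}$, which stalls because we have no a priori control of $\hsnorm{\theta(f)}$ --- exactly the difficulty emphasised at the start of this section. The fix is to keep $\theta(e)$ on the left and invert it only at the very end. In the semilattice we have $e\cdot f=ef$ and $e\cdot(ef)=ef$, so \eqref{eq:HS-delta-mult} gives $\hsnorm{\theta(e)\theta(f)-\theta(ef)}\leq\delta$ and $\hsnorm{\theta(e)\theta(ef)-\theta(ef)}\leq\delta$, whence $\hsnorm{\theta(e)(\theta(f)-\theta(ef))}\leq 2\delta$. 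Since $e\in S_2$ we have $\hsnorm{\theta(e)-I}\leq 1.05\delta<1$, so $\theta(e)$ is invertible with $\hsnorm{\theta(e)^{-1}}<1.5$ by Proposition~\ref{p:basics}\ref{li:inverse-bound}; multiplying through by $\theta(e)^{-1}$ yields $\hsnorm{\theta(f)-\theta(ef)}\leq 1.5\cdot 2\delta=3\delta$.

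From here the rest is bookkeeping with the (generous) numerical margins available because $\delta<0.03$. Since $f\in S_p$ we have $\theta(f)\in\clB_P(12\delta)$, hence $\theta(ef)\in\clB_P(15\delta)$. A non-zero idempotent has operator norm at least $1$, hence Hilbert--Schmidt norm at least $1$; applying this to $I-P$ gives $\hsnorm{\theta(ef)-I}\geq\hsnorm{I-P}-15\delta\geq 1-15\delta>1.05\delta$, so $ef\notin S_2$; applying it to $P$ gives $\hsnorm{\theta(ef)}\geq\hsnorm{P}-15\delta\geq 1-15\delta>1.05\delta$, so $ef\notin S_0$. Since $S=S_0\sqcup S_1\sqcup S_2$, this forces $ef\in S_1=S_p\sqcup S_q$. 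Finally, using $\hsnorm{2P-I}\geq\sqrt2$ (Lemma~\ref{l:2by2-key}(b)), we have $\hsnorm{\theta(ef)-(I-P)}\geq\hsnorm{2P-I}-15\delta\geq\sqrt2-15\delta>12\delta$, so $\theta(ef)\notin\clB_{I-P}(12\delta)$, i.e.\ $ef\notin S_q$; therefore $ef\in S_p$, as wanted.

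I expect the only genuine obstacle to be the Hilbert--Schmidt estimate in the second paragraph: everything downstream is immediate once one knows $\theta(ef)$ is within $O(\delta)$ of $\theta(f)$, and the single idea needed is to multiply by $\theta(e)$ first and invert last, so that the uncontrolled quantity $\hsnorm{\theta(f)}$ never enters the computation.
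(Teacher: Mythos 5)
Your proof is correct, and it takes a genuinely cleaner route than the paper's. The paper first rules out $ef\in S_2$ via Lemma~\ref{l:S2-an-upset}, then rules out $ef\in S_0$ by a trace-and-inverse estimate, concludes $ef\in S_1$, and only then invokes Lemma~\ref{l:chains-in-S1} (a separate application of Lemma~\ref{l:2by2-key}(b) with $\veps=4\delta$) to get $\hsnorm{\tf-\tef}\leq 5\delta$ before running the ball-separation argument. You instead observe that the two instances of \eqref{eq:HS-delta-mult} for $(e,f)$ and $(e,ef)$ combine to give $\hsnorm{\te(\tf-\tef)}\leq 2\delta$, and then cancel $\te$ using the inverse bound from Proposition~\ref{p:basics}\ref{li:inverse-bound}, obtaining $\hsnorm{\tf-\tef}\leq 3\delta$ in one stroke. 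This single estimate simultaneously rules out $S_2$, $S_0$ and the wrong half of $S_1$, so Lemmas~\ref{l:S2-an-upset} and~\ref{l:chains-in-S1} are not needed here at all, and the constant is slightly sharper ($3\delta$ vs.\ $5\delta$). One small point worth making explicit in a write-up: the conclusion $\theta(ef)\in\clB_P(15\delta)$ alone is not literally membership in $S_p=\theta^{-1}(\clB_P(12\delta))$, so the final elimination step (using the disjoint decomposition $S=S_0\sqcup S_p\sqcup S_q\sqcup S_2$ together with $\hsnorm{2P-I}\geq\sqrt2$) is essential, not merely ``bookkeeping''; you do supply it, so the argument is complete.
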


\begin{rem}
Although we know that $\te$ is close to $I$ for each $e\in S_2$, there might still exist rank-$1$ idempotents $R\in\Mat{2}$ such that $\norm{\te \cdot R - R}$ is large. So while the conclusion of Proposition~\ref{p:S2-times-S1} is as one would expect, the proof is somewhat circuitous.
\end{rem}

\begin{proof}[Proof of Proposition~\ref{p:S2-times-S1}]
Let $e\in S_2$ and $f\in S_1$. By Lemma~\ref{l:S2-an-upset}, $ef\notin S_2$. We claim that $ef\notin S_0$. For, assume $ef\in S_0$: then by Lemma~\ref{l:2by2-key}(b), $\hsnorm{\tef}\leq \kp(\delta)\delta \leq 1.05\delta$.
Since $e\in S_2$, we have $\hsnorm{(\te)^{-1}}<1.5$, by the same argument as in the proof of Proposition~\ref{p:basics}. Therefore, by Cauchy--Schwarz,
\[ \abs{\tr\tf} \leq \hsnorm{(\te)^{-1}}\hsnorm{\te\tf} \leq 1.5(\delta+ \hsnorm{\tef})  \ll 0.95 . \]
But, by \eqref{eq:attracted}, this implies $f\in S_0$, contradicting the assumption that $f\in S_1$.

The only remaining possibility is that $ef\in S_1$, and hence by Lemma~\ref{l:chains-in-S1} we have
\[ \hsnorm{\tf-\tef} \leq 5\delta \leq 0.15. \]
On the other hand, 
recalling that $\hsnorm{2P-I}\geq \sqrt{2}$
(by Lemma~\ref{l:2by2-key}(b)), we see that the distance between $\clB_P(12\delta)$ and $\clB_{I-P}(12\delta)$ is bounded below by $\sqrt{2}-24\delta > 0.6$.
Therefore, $\tf$ and $\tef$ either both belong to $\theta(S_p)$, or both belong to $\theta(S_q)$.
This completes the proof.
\end{proof}

Since Proposition~\ref{p:S2-times-S1} implies Assertion~\ref{li:2-p_2-q}, the function $\phi:S\to \Mat{2}$ is indeed multiplicative, and this completes the proof of Theorem~\ref{t:slatt-M2-amnm}.
\end{section}

\section{Concluding remarks and questions}
It would be interesting to try and find an intrinsic condition on a semilattice which is necessary and sufficient for the existence of some weight $\omega$ such that $\lom(S)$ is not AMNM. We have seen (Example~\ref{eg:cases}\ref{li:two} and Theorem~\ref{t:is-AMNM}) that $b(S)=+\infty$ is a necessary condition;
it may also be a sufficient condition, although we have not investigated further.

As remarked after the proof of Theorem~\ref{t:wt-Nmin-M2}, one can build on Theorem~\ref{t:slatt-M2-amnm} to show that $(\lom(\Nmin),\Mat{2})$ is  an AMNM pair if the weight satisfies $\sup_n \min(\om(n),\om(n+1))<\infty$. Details will be given in forthcoming work, which also plans to address  AMNM pair problems for $(\ell^1(S),B)$, with $B$ an arbitrary Banach algebra. There is some evidence to suggest that the method used for $B=\Mat{2}$ can be extended to $B=\Mat{n}$, $n\geq 3$, although a more laborious case-by-case analysis would be required. (However, see Remark~\ref{r:belated}.)

Finally, we close with the following question: {\it 
is $(\ell^1(S),\Bdd(E))$ AMNM for every Banach space $E$? If not, what if we restrict to the cases $E=\ell^p$ for $1<p<\infty$?}

\subsection*{Acknowledgements}
Some parts of this paper are based on work done in 2011, during a visit to the University of Newcastle upon Tyne. The author thanks the School of Mathematics and Statistics at the University for their hospitality.
Thanks also go to the anonymous referee for his or her diligent work, which caught several minor errors, led to the correct formulation of Theorem~\ref{t:wt-Nmin-M2},
and contained valuable suggestions that have improved the paper.

The work here was supported by NSERC Discovery Grant 402153-2011. 
Figure 1 was produced using Paul Taylor's \texttt{diagrams.sty} macros.

\newcommand{\and}{{\rm and}}

\vfill
\noindent
Department of Mathematics and Statistics

\noindent
University of Saskatchewan

\noindent
McLean Hall, 106 Wiggins Road

\noindent
Saskatoon, SK, Canada S7N 5E6

\noindent
Email: \texttt{choi@math.usask.ca}

\end{document}